\theoremstyle{definition}
\newtheorem{thm}{Theorem}[section]
\newtheorem{lem}[thm]{Lemma}
\newtheorem*{lem*}{Lemma}
\newtheorem*{thm*}{Theorem}
\newtheorem{prop}[thm]{Proposition}
\newtheorem{cor}[thm]{Corollary}
\newtheorem{defn}[thm]{Definition}
\newtheorem*{remark*}{Remark}
\newtheorem{remark}{Remark}
\newtheorem{example}{Example}
\newtheorem{cor/defn}[thm]{Corollary/Definition}
\DeclareMathOperator{\Par}{\mathbb{Y}}
\DeclareMathOperator{\SSYT}{\mathrm{SSYT}}
\DeclareMathOperator{\Dim}{\mathrm{dim}}
\DeclareMathOperator{\Hom}{\mathrm{Hom}}
\DeclareMathOperator{\Char}{\mathrm{char}}
\DeclareMathOperator{\GL}{\mathrm{GL}}
\DeclareMathOperator{\bor}{\mathrm{B}}
\DeclareMathOperator{\tor}{\mathrm{H}}
\DeclareMathOperator{\sort}{sort}
\DeclareMathOperator{\rev}{rev}
\DeclareMathOperator{\mathleft}{left}
\DeclareMathOperator{\mathright}{right}
\DeclareMathOperator{\arm}{arm}
\DeclareMathOperator{\leg}{leg}
\DeclareMathOperator{\mathleg}{lg}
\DeclareMathOperator{\cK}{\mathcal{K}}
\DeclareMathOperator{\cS}{\mathcal{S}}
\DeclareMathOperator{\cP}{\mathcal{P}}
\DeclareMathOperator{\cB}{\mathcal{B}}
\DeclareMathOperator{\cL}{\mathcal{L}}
\title{Multi-Symmetric Schur Functions}
\author{Milo Bechtloff Weising}
\address{Department of Mathematics (0123),
460 McBryde Hall, Virginia Tech,
225 Stanger Street,
Blacksburg, VA 24061-1026}
\email{milojbw@vt.edu}
\date{\today}
\begin{document}

\begin{abstract}
    We study a multi-symmetric generalization of the classical Schur functions called the multi-symmetric Schur functions. These functions form an integral basis for the ring of multi-symmetric functions indexed by tuples of partitions and are defined as certain stable-limits of key polynomials. We prove combinatorial results about the monomial expansions of the multi-symmetric Schur functions including a diagrammatic combinatorial formula and a triangularity result which completely characterizes their monomial multi-symmetric supports. The triangularity result involves a non-trivial generalization of the dominance order on partitions to tuples of partitions. We prove, using the Demazure character formula, that the multi-symmetric Schur functions expand positively into the basis of tensor products of ordinary Schur functions and describe the expansion coefficients as multiplicities of certain irreducible representations for Levi-subgroups inside particular Demazure modules. Lastly, we find a family of multi-symmetric plethystic operators related to the classical Bernstein operators which act on the multi-symmetric Schur basis by a simple recurrence relation. 
\end{abstract}

\maketitle

\tableofcontents

\section{Introduction}

The \textbf{\textit{Schur functions}} $s_{\lambda}(X)$ form a fundamental basis for the ring of symmetric functions $\Lambda(X)$ indexed by partitions $\lambda.$ Schur functions appear widely across mathematics in representation theory, geometry, and combinatorics. In representation theory, the Schur polynomials $s_{\lambda}(x_1,\ldots,x_n)$ correspond to characters of highest weight polynomial representations of $\mathrm{GL}_n$. In geometry, the Schur polynomials $s_{\lambda}(x_1,\ldots,x_n)$ naturally align with the cohomology classes of Grassmanian Schubert varieties and the Schur functions $s_{\lambda}(X)$ correspond to the cohomology classes of Schubert varieties in the infinite Grassmanian variety. In combinatorics, the Schur functions are a central object for the study of partitions and \textbf{\textit{Young tableaux}}.

The \textbf{\textit{key polynomials}} $\cK_{\alpha}(x)$ are a special family of integer polynomials indexed by compositions forming a basis for the ring $\mathbb{Z}[x_1,x_2,\ldots]$ \cite{RS95}. The key polynomials are the characters of \textbf{\textit{Demazure modules}} in type $\mathrm{GL}_n$ \cite{Dem_1974} \cite{Andersen1985}; $\mathrm{B}_n$-submodules of highest weight $\mathrm{GL}_n$ representations generated by extremal weight vectors. Key polynomials have the benefit of being very easy to work with as they are generated by a simple recursion involving \textbf{\textit{isobaric divided difference operators}}. The key polynomials are related to Schur functions directly by $\cK_{(\lambda_{n},\ldots, \lambda_1)}(x_1,\ldots,x_n) = s_{\lambda}(x_1,\ldots,x_n)$.
Correspondingly, the key polynomials $(\cK_{0^{n-\ell(\lambda)}*\rev(\lambda)}(x_1,\ldots,x_n) )_{n \geq \ell(\lambda)}$ form a \textbf{\textit{stable}} family of polynomials limiting to $s_{\lambda}(X)$ in the sense that $\cK_{0^{n+1-\ell(\lambda)}*\rev(\lambda)}(x_1,\ldots,x_n,0) = \cK_{0^{n-\ell(\lambda)}*\rev(\lambda)}(x_1,\ldots,x_n).$ Here $\rev(\lambda)$ is the composition formed by sorting $\lambda$ into weakly increasing order and $\alpha* \beta$ denotes the concatenation of the compositions $\alpha$ and $\beta$. The key polynomials have their own diagrammatic models and associated combinatorics which relates directly to the tableau combinatorics of Schur functions via these stable-limits \cite{RS95} \cite{Kirillov_2016} \cite{mason_2009} \cite{AQ19}.

However, there are other kinds of stable sequences of key polynomials. In the paper \cite{BWAlmostSymSchur}, the author considers sequences of the form $(\cK_{\mu*0^{n-\ell(\lambda)}*\rev(\lambda)}(x_1,\ldots,x_{n+\ell(\mu)}) )_{n \geq \ell(\lambda)}$ where $\mu$ is an arbitrary composition and $\lambda$ is a partition. These sequences are also stable with limits $s_{(\mu|\lambda)}(X)$ called the \textbf{\textit{almost-symmetric Schur functions}}. For fixed $\ell(\mu)$ the functions $s_{(\mu|\lambda)}(X)$ form a basis of the space of \textbf{\textit{almost-symmetric functions}} which are symmetric after $\ell(\mu).$ In particular, $s_{(\mu|\emptyset)}(X) = \cK_{\mu}(x_1,\ldots,x_{\ell(\mu)})$ and $s_{(\emptyset|\lambda)}(X) = s_{\lambda}(X).$ The almost-symmetric Schur functions are a certain $q,t$ parameter specialization of the stable-limit non-symmetric Macdonald functions considered in \cite{BW}. This generalizes the fact that the non-symmetric Macdonald polynomials $E_{\mu}(x;q,t)$ specialize to the key polynomials $\cK_{\mu}(x)$ after an appropriate $q,t$ substitution \cite{Ion_2003}. The functions $s_{(\mu|\lambda)}(X)$ have a diagrammatic formula and satisfy many nice algebraic and combinatorial properties. 

As we will see in this paper (Theorem \ref{general key stability thm}), the almost-symmetric Schur functions are just one subfamily in a large class of convergent power series formed as stable limits of key polynomials. For any compositions $\beta,\alpha_1,\ldots,\alpha_r$ we may construct a power series
\begin{align*}
    &F_{(\alpha_1|\cdots |\alpha_r)}^{\beta}(y_1,\ldots,y_{\ell(\beta)}|X_1 |z_{1,1},\ldots,z_{1,\ell(\alpha_1)}|\ldots|X_r|z_{r,1},\ldots,z_{r,\ell(\alpha_r)}):= \\
    & \lim_{n_1,\ldots , n_r \rightarrow \infty}\cK_{\beta*0^{n_1}*\alpha_1*\cdots * 0^{n_r}*\alpha_r}(y_1,\ldots,y_{\ell(\beta)}, x_{1,1},\ldots,x_{1,n_{1}},z_{1,1},\ldots,z_{1,\ell(\alpha_1)},\ldots, x_{r,1},\ldots,x_{r,n_r}z_{r,1},\ldots,z_{r,\ell(\alpha_r)}). \\
\end{align*}
These power series live in the partially \textbf{\textit{multi-symmetric}} rings 
$$\mathbb{Z}[y_1,\ldots,y_{\ell(\beta)},z_{1,1},\ldots,z_{1,\ell(\alpha_1)},\ldots,z_{r,1},\ldots,z_{r,\ell(\alpha_r)}]\otimes \Lambda_{\mathbb{Z}}(X_1|\cdots|X_r).$$ We will focus on the case when $\beta = \emptyset$ and each $\alpha_i = \rev(\lambda^{(i)})$ for partitions $\lambda^{(1)},\ldots, \lambda^{(r)}$ and leave the more general case for future work. In this case, we obtain the main object of this paper called the \textbf{\textit{multi-symmetric Schur functions}} $\cS_{(\lambda^{(1)}|\cdots| \lambda^{(r)})}(X_1|\cdots |X_r)$ given by the limit
$$\lim_{n_1,\ldots,n_r \rightarrow \infty}\cK_{(0^{n_1-\ell(\lambda^{(1)})}*\rev(\lambda^{(1)}))*\cdots * (0^{n_r-\ell(\lambda^{(r)})}*\rev(\lambda^{(r)}))}(x_{1,1},\ldots,x_{1,n_1},\ldots,x_{r,1},\ldots,x_{r,n_r}).$$ The partial symmetries of key polynomials have been studied previously. Ross--Yong give a combinatorial formula (Theorem 1.1 \cite{RY15}) for the Schur polynomial expansions of key polynomials $\cK_{\beta}$ which have been split into separate variable sets corresponding to weakly increasing subsequences of $\beta.$ Later works have related these Schur polynomial expansions to the geometry  of Schubert varieties \cite{HY22} \cite{GHY21}.

Here we will outline the main results of this paper. First, we prove a general key polynomial stable-limit theorem \ref{general key stability thm} and apply it to the multi-symmetric situation to obtain Definition \ref{multi-sym Schur function defn}. We readily prove that the family $\{ \cS_{(\lambda^{(1)}|\cdots| \lambda^{(r)})}(X_1|\cdots |X_r) | (\lambda^{(1)}|\cdots | \lambda^{(r)}) \in \Par^r\}$ form a $\mathbb{Q}$-basis for the ring of \textbf{\textit{r multi-symmetric functions}} $\Lambda(X_1|\cdots | X_r).$ Next, we develop the necessary diagrammatics required to describe the combinatorics underlying the functions $\cS_{(\lambda^{(1)}|\cdots| \lambda^{(r)})}$ leading to Theorem \ref{combinatorial formula for multi-sym Schur}. We describe in detail the monomial expansions of the multi-symmetric Schur functions defining multi-symmetric analogues of the \textbf{\textit{Kostka numbers}} $K_{(\mu^{(1)}|\cdots | \mu^{(r)})}^{(\lambda^{(1)}|\cdots| \lambda^{(r)})}$ along with a combinatorial formula (Theorem \ref{Kostka coeff thm}) generalizing the usual shape-content formula for Kostka numbers. We prove two triangularity results for the monomial expansions of the $\cS_{(\lambda^{(1)}|\cdots| \lambda^{(r)})}$. The first is a weaker lexicographic triangularity (Theorem \ref{crude triangularity thm}), which does not fully describe the vanishing of the multi-symmetric Kostka numbers, but has the benefit of being easy to work with. In particular, we will readily conclude that the $\cS_{(\lambda^{(1)}|\cdots| \lambda^{(r)})}$ are actually a $\mathbb{Z}$-basis of $\Lambda_{\mathbb{Z}}(X_1|\cdots |X_r).$ Afterwards, we will define a refined partial order $\triangleleft$ on multi-partitions $\Par^r$ generalizing the classical \textbf{\textit{dominance order}} and show that the multi-symmetric Schur functions are also triangular with respect to $\triangleleft$ (Theorem \ref{complete support of multi-sym Schur}). In fact, we will show that $(\mu^{(1)}|\cdots | \mu^{(r)}) \trianglelefteq (\lambda^{(1)}|\cdots | \lambda^{(r)})$ if and only if $K_{(\mu^{(1)}|\cdots | \mu^{(r)})}^{(\lambda^{(1)}|\cdots| \lambda^{(r)})} \neq 0.$ Next, we use the \textbf{\textit{Demazure character formula}} to show that $\cS_{(\lambda^{(1)}|\cdots| \lambda^{(r)})}$ is Schur positive, i.e., the expansion of each multi-symmetric Schur function into the basis $s_{(\mu^{(1)}|\cdots |\mu^{(r)})}:= s_{\mu^{(1)}}(X_1)\cdots s_{\mu^{(r)}}(X_r)$ has non-negative coefficients (Theorem \ref{Schur positivity theorem}). Moreover, these expansion coefficients are the multiplicities of irreducible representations of type $\GL$ \textbf{\textit{Levi subgroups}} in certain Demazure modules. Lastly, we construct a family of \textbf{\textit{plethystic operators}} $D_1,\ldots,D_{r-1}$ on $\Lambda(X_1|\cdots | X_r)$ using limits of products of isobaric divided difference operators which we show in Theorem \ref{plethysm thm} are given by  
$$D_i(f(X_1|\cdots|X_r))= \langle z^0 \rangle f(X_1|\cdots | X_i+z|X_{i+1}-z^{-1}|\cdots |X_r) \Omega(zX_{i+1}).$$ Finally, we show that
$$D_i\left(\cS_{(\lambda^{(1)}|\cdots |\lambda^{(r)})} \right) = \cS_{(\lambda^{(1)}|\cdots|\lambda^{(i)}\setminus \{\lambda^{(i)}_{1}\}|\lambda^{(i+1)} \cup \{\lambda^{(i)}_{1}\}|\cdots |\lambda^{(r)})}.$$

\subsection{Acknowledgments}
The author would like to thank Daniel Orr for insightful conversations regarding key polynomials and the combinatorics of Macdonald polynomials. The author would also like to thank Alexander Yong for helpful comments regarding a previous version of this paper.

\section{Basic Notions}

\subsection{Key Polynomials}

For the remainder of this paper, all \textit{free variables} are assumed to commute with one another and all commutative rings are assumed to have characteristic $0.$

\begin{defn}
Given a commutative ring $R$ and free variables $x,y$ define the \textbf{\textit{isobaric divided difference operator}} $\xi_{x,y}: R[x,y] \rightarrow R[x,y]$ by 
$$\xi_{x,y}(f(x,y)):= \frac{xf(x,y)-yf(y,x)}{x-y}.$$ For $1\leq i \leq n-1$ we will write $\xi_{i}:= \xi_{x_i,x_{i+1}}$ for the corresponding operator on $\mathbb{Q}[x_1,\ldots,x_n].$
\end{defn}

\begin{defn}\cite{Dem_1974}
    Let $n \geq 1$. Define the \textbf{\textit{key polynomials}} to be the unique collection of polynomials $\{\mathcal{K}_{\alpha}(x_1,\ldots, x_n) \}_{\alpha \in \mathbb{Z}_{\geq 0}^{n}} \subset \mathbb{Q}[x_1,\ldots,x_n]$ determined by the following properties:
    \begin{itemize}
        \item If $\alpha_1 \geq \ldots \geq \alpha_n$ then 
        $$\mathcal{K}_{\alpha}(x_1,\ldots, x_n)= x^{\alpha}.$$
        \item Whenever $\alpha_i > \alpha_{i+1}$
        $$\mathcal{K}_{s_i(\alpha)}(x_1,\ldots, x_n) = \xi_{i}(\mathcal{K}_{\alpha}(x_1,\ldots, x_n)).$$
    \end{itemize}
\end{defn}

We refer the reader to Kirillov \cite{Kirillov_2016}, Mason \cite{mason_2009}, Reiner--Shimozono \cite{RS95}, and Assaf--Quijada \cite{AQ19} for an overview of key polynomials and their associated combinatorics. Below is a fundamental result about key polynomials:

\begin{prop}\cite{RS95}\label{key polys are basis}
    The key polynomials $\{\mathcal{K}_{\alpha}(x_1,\ldots, x_n) \}_{\alpha \in \mathbb{Z}_{\geq 0}^{n}}$ form a $\mathbb{Z}$-basis for $\mathbb{Z}[x_1,\ldots,x_n].$
\end{prop}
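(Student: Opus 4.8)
The plan is to prove the stronger statement that the transition matrix between the monomial basis $\{x^{\beta}\}$ and the key polynomials $\{\mathcal{K}_{\beta}\}$ is unitriangular over $\mathbb{Z}$, which immediately upgrades the fact that $\{x^{\beta}\}$ is a $\mathbb{Z}$-basis to the same statement for $\{\mathcal{K}_{\beta}\}$. Since each $\xi_i$ preserves total degree, every $\mathcal{K}_{\alpha}$ is homogeneous of degree $|\alpha|$, so it suffices to work one graded piece at a time: for each $d$ the finite set $\{x^{\beta} : \beta \in \mathbb{Z}_{\geq 0}^n,\ |\beta| = d\}$ is a $\mathbb{Z}$-basis of the degree-$d$ component, and I will show the degree-$d$ key polynomials arise from it by an integer unitriangular change of basis. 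Invertibility over $\mathbb{Z}$ of a unitriangular matrix then gives the result in each degree, and summing over $d$ yields the full claim.

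First I would record, by direct computation, the action of $\xi_i$ on a monomial. Writing $a = \gamma_i$ and $b = \gamma_{i+1}$, one finds $\xi_i(x^{\gamma}) = \sum_{k=0}^{a-b} x^{\gamma - k(e_i - e_{i+1})}$ when $a \geq b$ (a ``staircase'' of monomials running from $x^{\gamma}$ down to $x^{s_i(\gamma)}$), that $\xi_i(x^{\gamma}) = x^{\gamma}$ when $a = b$, and an analogous signed expression when $a < b$. In particular $\xi_i$ carries integer polynomials to integer polynomials and preserves degree. Combined with the base case $\mathcal{K}_{\lambda} = x^{\lambda}$ for partitions $\lambda$, an induction along the recursion $\mathcal{K}_{s_i(\gamma)} = \xi_i(\mathcal{K}_{\gamma})$ used to build $\mathcal{K}_{\alpha}$ from $\mathcal{K}_{\sort(\alpha)}$ shows that $\mathcal{K}_{\alpha} \in \mathbb{Z}[x_1,\ldots,x_n]$ and is homogeneous of degree $|\alpha|$.

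The heart of the argument is to exhibit a total order $\prec$ on the compositions of each fixed degree for which $\mathcal{K}_{\alpha} = x^{\alpha} + \sum_{\beta \prec \alpha} c_{\alpha\beta}\, x^{\beta}$ with $c_{\alpha\beta} \in \mathbb{Z}$; that is, for which $x^{\alpha}$ is the $\prec$-maximal monomial of $\mathcal{K}_{\alpha}$, occurring with coefficient $1$. I would take $\prec$ to refine the order in which $\beta \prec \alpha$ whenever $\sort(\beta) \triangleleft \sort(\alpha)$ in dominance, breaking ties among rearrangements of a common partition by reverse dominance; in the small cases one checks that even plain lexicographic order works, with $x^{\alpha}$ the lexicographically smallest monomial of $\mathcal{K}_{\alpha}$. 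The claim would be proved by induction along $\mathcal{K}_{s_i(\gamma)} = \xi_i(\mathcal{K}_{\gamma})$ for $\gamma_i > \gamma_{i+1}$: by the staircase formula the leading monomial $x^{\gamma}$ of $\mathcal{K}_{\gamma}$ produces $x^{s_i(\gamma)}$ with coefficient $1$, and the point to verify is that neither the remaining monomials of $\mathcal{K}_{\gamma}$ nor the other rungs of any staircase can produce a monomial equalling or exceeding $x^{s_i(\gamma)}$ in $\prec$. This bookkeeping is the main obstacle: one must track precisely how $\xi_i$ redistributes weight between coordinates $i$ and $i+1$ and confirm that it respects $\prec$, since $s_i$ interacts cleanly with neither dominance nor lexicographic order, and it is exactly the correct choice of $\prec$ that makes the leading term persist.

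Granting this unitriangularity, fix a degree $d$ and order the finitely many compositions $\beta$ with $|\beta| = d$ compatibly with $\prec$. The matrix expressing $(\mathcal{K}_{\alpha})$ in terms of $(x^{\beta})$ is then triangular with $1$'s on the diagonal, hence invertible over $\mathbb{Z}$, so $\{\mathcal{K}_{\alpha} : |\alpha| = d\}$ is a $\mathbb{Z}$-basis of the degree-$d$ part of $\mathbb{Z}[x_1,\ldots,x_n]$. Taking the direct sum over all $d \geq 0$ shows that $\{\mathcal{K}_{\alpha}\}_{\alpha \in \mathbb{Z}_{\geq 0}^n}$ is a $\mathbb{Z}$-basis of $\mathbb{Z}[x_1,\ldots,x_n]$, as claimed.
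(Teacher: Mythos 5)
Your overall strategy -- unitriangularity of the transition matrix from $\{x^{\beta}\}$ to $\{\mathcal{K}_{\beta}\}$ with respect to a suitable term order -- is exactly the standard Reiner--Shimozono route; the paper itself gives no proof (it cites \cite{RS95}) and later quotes the needed triangularity $\mathcal{K}_{\alpha} = x^{\alpha} + \sum_{\beta <_{RL} \alpha} c_{\alpha,\beta}x^{\beta}$ as a separate result from \cite{RS95}. The preliminary steps you do carry out (the staircase formula for $\xi_i$ on monomials, integrality, homogeneity, and the reduction to an invertible unitriangular matrix in each degree) are all correct.

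The problem is that the entire mathematical content of the proposition lies in the step you explicitly defer: proving, for some concretely specified total order $\prec$, that the property ``$x^{\gamma}$ occurs with coefficient $1$ and every other monomial of $\mathcal{K}_{\gamma}$ is $\prec x^{\gamma}$'' survives the recursion $\mathcal{K}_{s_i(\gamma)} = \xi_i(\mathcal{K}_{\gamma})$. You name two candidate orders without committing to either, and you acknowledge that the verification is ``the main obstacle'' without performing it. Moreover, the naive induction genuinely fails as stated: knowing only that the lower-order terms $x^{\beta}$ of $\mathcal{K}_{\gamma}$ satisfy $\beta \prec \gamma$ is not enough to conclude that every monomial of $\xi_i(x^{\beta})$ is $\prec s_i(\gamma)$. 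For instance, with $\gamma = (2,1)$ and $\beta = (3,0)$ one has $\beta <_{RL} \gamma$, yet $\xi_1(x^{(3,0)}) = x^{(3,0)}+x^{(2,1)}+x^{(1,2)}+x^{(0,3)}$ contains $x^{(0,3)}$, which exceeds $x^{s_1(\gamma)} = x^{(1,2)}$ in reverse lexicographic order. So the induction hypothesis must be strengthened -- for example by simultaneously controlling $\sort(\beta)$ in dominance order, or by tracking the exact monomial support as in the Fan--Guo--Peng--Sun characterization the paper cites later -- before the leading-term argument closes. As written, the proof reduces the proposition to precisely the unproven triangularity lemma, so there is a genuine gap.
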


 In this paper, we will be using an abridged version of the Haglund--Haiman--Loehr combinatorial model for non-symmetric Macdonald polynomials \cite{haglund2007combinatorial}. 

\begin{defn}\cite{haglund2007combinatorial} \label{HHL defn}
For a composition $\mu = (\mu_1,\dots,\mu_n)$, define the column diagram of $\mu$ as 
$$dg'(\mu):= \{(i,j)\in \mathbb{N}^2 : 1\leq i\leq n, 1\leq j \leq \mu_i \}.$$ This is represented by a collection of boxes in positions given by $dg'(\mu)$. The augmented diagram of $\mu$ is given by 
$$\widehat{dg}(\mu):= dg'(\mu)\cup\{(i,0): 1\leq i\leq n\}.$$
Visually, to get $\widehat{dg}(\mu)$ we are adding a bottom row of boxes on length $n$ below the diagram $dg'(\mu)$. Given $u = (i,j) \in dg'(\mu)$ define the following:
\begin{itemize}
    \item $\leg(u) := \{(i,j') \in dg'(\mu): j' > j\}$
    \item $\arm^{\mathleft}(u) := \{(i',j) \in dg'(\mu): i'<i, \mu_{i'} \leq \mu_i\} $
    \item $\arm^{\mathright}(u):= \{(i',j-1) \in \widehat{dg}(\mu): i'>i, \mu_{i'}<\mu_{i}\}$
    \item $\arm(u) := \arm^{\mathleft}(u) \cup \arm^{\mathright}(u)$
    \item $\mathleg(u):= |\leg(u)| = \mu_i -j$
    \item $a(u) := |\arm(u)|.$
\end{itemize}
A filling of $\mu$ is a function $\sigma: dg'(\mu) \rightarrow \{1,...,n\}$ and given a filling there is an associated augmented filling $\widehat{\sigma}: \widehat{dg}(\mu) \rightarrow \{1,...,n\}$ extending $\sigma$ with the additional bottom row boxes filled according to $\widehat{\sigma}((j,0)) = j$ for $j = 1,\dots,n$. Distinct lattice squares $u,v \in \mathbb{N}^2$ are said to attack each other if one of the following is true:
\begin{itemize}
\item $u$ and $v$ are in the same row 
\item $u$ and $v$ are in consecutive rows and the box in the lower row is to the right of the box in the upper row.
\end{itemize}
A filling $\sigma: dg'(\mu) \rightarrow \{1,\dots,n\}$ is \textbf{\textit{non-attacking}} if $\widehat{\sigma}(u) \neq \widehat{\sigma}(v)$ for every pair of attacking boxes $u,v \in \widehat{dg}(\mu).$
For a box $u= (i,j)$ let $d(u) = (i,j-1)$ denote the box just below $u$.
\end{defn}

\begin{defn}\cite{haglund2007combinatorial}
    Let $\sigma: \mu \rightarrow \{1,\ldots, n\}$ be a non-attacking labeling. A \textbf{\textit{co-inversion triple}} is a triple of boxes $(u,v,w)$ in the diagram $\widehat{dg}(\mu)$ of one of the following two types
\begin{center}
    Type 1: \begin{ytableau}
        u & \none & \none \\
        w &   \none   & v \\
        \end{ytableau} ~~~~ Type 2: \begin{ytableau}
        v & \none & u \\
        \none &   \none   & w \\
        \end{ytableau}
\end{center}

that satisfy the following criteria:
\begin{itemize}
    \item in Type 1 the column containing $u$ and $w$ is strictly taller than the column containing $v$
    \item in Type 2 the column containing $u$ and $w$ is weakly taller than the column containing $v$
    \item in either Type 1 or Type 2, $\widehat{\sigma}(u)< \widehat{\sigma}(v)< \widehat{\sigma}(w)$ or $\widehat{\sigma}(v)< \widehat{\sigma}(w)< \widehat{\sigma}(u)$ or $\widehat{\sigma}(w)< \widehat{\sigma}(u)< \widehat{\sigma}(v).$
\end{itemize}

Informally, in Type 1 we require the entries to strictly increase clockwise and in Type 2 we require the entries to strictly increase counterclockwise.
\end{defn}

Note that as the diagrams above indicate, in a co-inversion triple we require that $w = d(u)$ and $v \in \arm(u)$. 

\begin{defn}
    Given $\alpha \in \mathbb{Z}_{\geq 0}^{n}$, denote by $\mathcal{L}'(\alpha)$ the set of non-attacking labellings $\sigma: \alpha \rightarrow \{1,\ldots, n\}$ such that $\widehat{\sigma}$ is weakly increasing upwards and has no co-inversion triples. 
\end{defn}

The following is a well-known combinatorial formula for the key polynomials (see \cite{BWAlmostSymSchur} for an explanation):

\begin{prop}\label{key poly combinatorial formula}
    For $\alpha \in \mathbb{Z}_{\geq 0}^{n}$, 
    $$\mathcal{K}_{\alpha} = \sum_{\sigma \in \mathcal{L}'(\alpha) } x^{\sigma}.$$
\end{prop}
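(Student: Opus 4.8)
The plan is to verify that the right-hand side obeys the two defining properties of the key polynomials and then invoke the uniqueness asserted in their definition. Write $N_\alpha := \sum_{\sigma \in \mathcal{L}'(\alpha)} x^{\sigma}$ and induct on the number of inversions $\inv(\alpha) = \#\{(i,j) : i < j,\ \alpha_i < \alpha_j\}$. When $\inv(\alpha) = 0$ the composition $\alpha$ is weakly decreasing, and the base case requires $N_\alpha = x^\alpha$. For the inductive step, if $\alpha$ is not weakly decreasing I would pick an index $i$ with $\alpha_i < \alpha_{i+1}$ and set $\beta := s_i\alpha$, so that $\beta_i > \beta_{i+1}$ and $\inv(\beta) = \inv(\alpha) - 1$ (only the pair $(i,i+1)$ changes inversion status under an adjacent swap). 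By induction $N_\beta = \mathcal{K}_\beta$, so it suffices to prove the operator identity $\xi_i(N_\beta) = N_{s_i\beta}$; combined with the defining recursion $\mathcal{K}_{s_i\beta} = \xi_i(\mathcal{K}_\beta)$ and the fact that the two listed properties determine the $\mathcal{K}_\alpha$ uniquely, this forces $N_\alpha = \mathcal{K}_\alpha$ for every $\alpha$.

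For the base case I would show that the unique element of $\mathcal{L}'(\alpha)$ for weakly decreasing $\alpha$ is the content filling $\sigma(i,j) = i$. Admissibility is immediate: each column is constant, so columns are weakly increasing upward; attacking boxes in the same or consecutive rows carry distinct column indices; and any prospective co-inversion triple $(u, d(u), v)$ has $\widehat{\sigma}(u) = \widehat{\sigma}(d(u))$, violating the strict cyclic-increase requirement. Uniqueness follows by a bottom-up, left-to-right sweep: the augmented box below forces $\sigma(i,1) \geq i$, and the non-attacking and co-inversion-free constraints, together with the weak decrease of the column heights, propagate to pin every entry of column $i$ to the value $i$. Hence $N_\alpha = x^\alpha$.

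The crux is the operator identity $\xi_i(N_\beta) = N_{s_i\beta}$ for $\beta_i > \beta_{i+1}$. Since $\xi_i$ involves only $x_i, x_{i+1}$ and acts by $\xi_i(x_i^a x_{i+1}^b) = \sum_{k=b}^{a} x_i^k x_{i+1}^{a+b-k}$ when $a \geq b$, proving it amounts to organizing the labelings in $\mathcal{L}'(\beta)$ and $\mathcal{L}'(s_i\beta)$ into $i$-strings along which the total multiplicities of the values $i$ and $i+1$ vary, and matching these strings against the output of $\xi_i$. This is precisely the statement that the generating functions $N_\alpha$ carry the type $\mathrm{GL}_n$ Demazure-crystal structure, and building the requisite Kashiwara $e_i/f_i$ pairing on co-inversion-free non-attacking fillings is the main obstacle. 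I expect the delicate point to be the bookkeeping of how the non-attacking and triple conditions transform when columns $i$ and $i+1$ exchange heights, especially because the values $i$ and $i+1$ may occur in columns other than $i$ and $i+1$.

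A faster alternative, and the one implicit in the paper's description of $\mathcal{L}'(\alpha)$ as an \emph{abridged} Haglund--Haiman--Loehr model, is to begin from the HHL formula for the non-symmetric Macdonald polynomials $E_\alpha(x;q,t)$ and specialize to $\mathcal{K}_\alpha$ via Ion's theorem. Under this specialization the $q^{\maj}$ and $t^{\coinv}$ weights together with the product of $(1-t)$-type factors degenerate, annihilating every non-attacking filling except those that are weakly increasing upward and free of co-inversion triples, each of which survives with coefficient $1$. Here the obstacle shifts to verifying the collapse of the statistics, which is more mechanical than constructing the crystal pairing by hand; for that reason I would cite the established HHL theorem and check the degeneration rather than reprove the $\xi_i$-recursion from scratch.
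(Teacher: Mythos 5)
The paper does not actually prove this proposition: it is stated as a well-known formula with a pointer to \cite{BWAlmostSymSchur}, where it is obtained by specializing the Haglund--Haiman--Loehr formula for $E_\alpha(x;q,t)$ to key polynomials via Ion's theorem. Your second, ``faster alternative'' is therefore exactly the paper's route, and citing it is a legitimate way to discharge the statement. Your framework for the first route is also structurally sound: the induction on $\inv(\alpha)$ is set up correctly (an adjacent swap at a rise changes only the status of the pair $(i,i+1)$, so $\inv$ drops by one), the computation $\xi_i(x_i^a x_{i+1}^b)=\sum_{k=b}^{a}x_i^k x_{i+1}^{a+b-k}$ is right, and the base case argument does pin down the content filling as the unique element of $\mathcal{L}'(\alpha)$ for weakly decreasing $\alpha$. (The paper's definition of $\mathcal{L}'$ says ``weakly increasing upwards'' while its examples and the later multi-symmetric definition use ``weakly decreasing upwards''; your basement-propagation argument forces $\sigma(i,1)=i$ under either reading once the attacking conditions with the basement are used, so this does not affect your base case.)

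The genuine gap is that the entire content of the proposition in your first route is the operator identity $\xi_i(N_\beta)=N_{s_i\beta}$, and you do not prove it --- you correctly identify that it requires organizing the fillings of $\mathcal{L}'(\beta)$ and $\mathcal{L}'(s_i\beta)$ into $i$-strings (equivalently, a Demazure crystal structure on co-inversion-free non-attacking fillings), but you never construct the pairing, and you yourself flag the transformation of the non-attacking and triple conditions under the column swap as the delicate point. As a standalone proof, route (1) is therefore a skeleton with the load-bearing lemma missing. Route (2) is complete only insofar as one accepts the HHL theorem and Ion's specialization as citable and then actually verifies the collapse of the $q,t$-statistics (which fillings survive with coefficient $1$); you describe this check but do not carry it out. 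Since the paper itself delegates the whole proposition to a reference, the honest conclusion is: your proposal correctly locates the two standard proofs and matches the paper's choice, but neither route is executed to the point of being a proof on its own.
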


\begin{example}
    Consider $\alpha = (1,0,2)$ and the corresponding key polynomial 
    $$\cK_{(1,0,2)}(x_1,x_2,x_3) = x_1^2x_2+x_1^2x_3+x_1x_2^2 + x_1x_2x_3+x_1x_3^2.$$ Each of these monomials corresponds to a diagram filling in the set $\cL'(1,0,2)$ as follows:
\begin{multicols}{2}
    \begin{itemize}
        \item \begin{center}
    $x_1^2x_2 \rightarrow$ ~~~~ \begin{ytableau}
        \none & \none & 1 \\
        1 &   \none   & 2 \\
        1 & 2 & 3 \\
        \end{ytableau}
        \end{center}
       \item \begin{center}
    $x_1^2x_3 \rightarrow$ ~~~~ \begin{ytableau}
        \none & \none & 1 \\
        1 &   \none   & 3 \\
        1 & 2 & 3 \\
        \end{ytableau}
        \end{center}
       \item \begin{center}
    $x_1x_2^2 \rightarrow$ ~~~~ \begin{ytableau}
        \none & \none & 2 \\
        1 &   \none   & 2 \\
        1 & 2 & 3 \\
        \end{ytableau}
        \end{center}
        \item \begin{center}
    $x_1x_2x_3 \rightarrow$ ~~~~ \begin{ytableau}
        \none & \none & 2 \\
        1 &   \none   & 3 \\
        1 & 2 & 3 \\
        \end{ytableau}
        \end{center}
        \item \begin{center}
    $x_1x_3^2 \rightarrow$ ~~~~ \begin{ytableau}
        \none & \none & 3 \\
        1 &   \none   & 3 \\
        1 & 2 & 3 \\
        \end{ytableau}
        \end{center}
    \end{itemize}
\end{multicols}

It is informative to check that each of these diagram fillings is non-attacking, weakly decreasing upwards, and has no co-inversion triples. For a non-example, the filling
\begin{center}
       \begin{ytableau}
        \none & \none & 1\\
        2* &   \none   & 1* \\
        1 & 2 & 3* \\
        \end{ytableau}
        \end{center}
which is not in the set $\cL'(1,0,2)$ since the boxes with $*$'s form a co-inversion triple of Type 2. More egregiously, this labeling is attacking since the $2$ is row $1$ attacks the $2$ in row $0.$

\end{example}

\subsection{Demazure Characters}

Here we give a brief overview of Demazure characters in type $\GL.$

\begin{defn}
    Let $n \geq 1.$ Define $\bor_n$ to be the Borel subgroup of upper-triangular matrices in $\GL_n$ and let $\tor_n$ denote the group of diagonal matrices in $\GL_n.$ Let $\mathfrak{b}_n$ denote the Lie algebra of $\bor_n$ i.e. the set of upper triangular $n\times n$ matrices over $\mathbb{C}$ with the usual commutator. Let $\mathcal{U}(\mathfrak{b}_n)$ denote the universal enveloping algebra of $\mathfrak{b}_n$. We will write $\mathfrak{S}_n$ for the group of permutations of $\{1,\ldots,n\}$. For a \textbf{\textit{dominant}} (i.e. weakly decreasing) integral weight $\lambda \in \mathbb{Z}_{\geq 0}^n$, let $\mathcal{V}^{\lambda}$ denote the corresponding \textbf{\textit{highest weight representation}} of $\GL_n.$
\end{defn}

\begin{defn}
    Given a finite dimensional polynomial representation $V$ of $\tor_n$, we will denote by $\Char(V) \in \mathbb{Z}[x_1,\ldots,x_n]$ the \textbf{\textit{ formal character}} of $V$ as 
    $$\Char(V) = \sum_{\alpha \in \mathbb{Z}_{\geq 0}^{n}} \Dim \Hom_{\tor_n}(\alpha,V) x^{\alpha}$$ where we denote by $\alpha$ the corresponding character of $\tor_n.$
\end{defn}

\begin{defn}\cite{Dem_1974} \label{Demazure module defn}
    Given a dominant integral weight $\lambda \in \mathbb{Z}_{\geq 0}^{n}$ and $\sigma \in \mathfrak{S}_n$, define the \textbf{\textit{Demazure module}} $\mathcal{V}^{\lambda}_{\sigma(\lambda)}$ to be the $\bor_n$-module $$\mathcal{V}^{\lambda}_{\sigma(\lambda)}:= \mathcal{U}(\mathfrak{b}_n)v$$ where
    $v \in \mathcal{V}^{\lambda}$ is any weight vector with weight $\sigma(\lambda).$ 
\end{defn}

The following formula conjectured by Demazure \cite{Dem_1974} and proved by Andersen \cite{Andersen1985} relates Demazure modules directly to key polynomials.

\begin{thm}(Demazure Character Formula)\cite{Andersen1985} \label{Demazure Character Formula}
    Given a dominant integral weight $\lambda \in \mathbb{Z}_{\geq 0}^{n}$ and $\sigma \in \mathfrak{S}_n$,
    $$\Char(\mathcal{V}^{\lambda}_{\sigma(\lambda)}) = \mathcal{K}_{\sigma(\lambda)}.$$
\end{thm}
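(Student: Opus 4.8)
The plan is to prove $\Char(\mathcal{V}^{\lambda}_{\sigma(\lambda)}) = \mathcal{K}_{\sigma(\lambda)}$ by induction that mirrors the two defining properties of the key polynomials, then invoke their uniqueness. Since $\lambda$ is dominant, $\sigma(\lambda)$ is an extremal weight, so the $\sigma(\lambda)$-weight space of $\mathcal{V}^{\lambda}$ is one-dimensional; hence the generating vector $v$ is determined up to scalar and $\mathcal{V}^{\lambda}_{\sigma(\lambda)}$ depends only on the weight $\sigma(\lambda)$, not on the particular $\sigma$. For the base case I take the weight $\lambda$ itself (weakly decreasing): here $v$ is a highest weight vector, every positive root operator annihilates it, and $\tor_n$ acts through the character $\lambda$, so $\mathcal{U}(\mathfrak{b}_n)v = \mathbb{C}v$ is one-dimensional with character $x^{\lambda} = \mathcal{K}_{\lambda}$, matching the first defining property.

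For the inductive step, suppose $\alpha = \sigma(\lambda)$ satisfies $\alpha_i > \alpha_{i+1}$ and set $M = \mathcal{V}^{\lambda}_{\alpha}$. I would localize the entire step to the rank-one datum of the $i$-th simple root: the $\SL_2$ generated by $e_i, f_i, h_i$ and the minimal parabolic $\para_i \supset \bor_n$ with Lie algebra $\mathfrak{p}_i = \mathfrak{b}_n + \mathbb{C}f_i$. The first claim is the module identity
$$\mathcal{V}^{\lambda}_{s_i(\alpha)} = \mathcal{U}(\mathfrak{p}_i)\,M.$$
One inclusion is elementary: the extremal vector of weight $s_i(\alpha)$ is $f_i^{\,\alpha_i-\alpha_{i+1}}v$, sitting at the bottom of the relevant $\SL_2^{(i)}$-string, and since $\mathcal{U}(\mathfrak{p}_i)M$ is $\mathfrak{b}_n$-stable it contains $\mathcal{U}(\mathfrak{b}_n)f_i^{\,\alpha_i-\alpha_{i+1}}v = \mathcal{V}^{\lambda}_{s_i(\alpha)}$; conversely $v = e_i^{\,\alpha_i - \alpha_{i+1}}(f_i^{\,\alpha_i-\alpha_{i+1}}v)$ up to a nonzero scalar, so $v \in \mathcal{V}^{\lambda}_{s_i(\alpha)}$ and the opposite inclusion reduces to the standard structural fact that, in this length-increasing direction, the Demazure module indexed by $s_i(\alpha)$ is already $\para_i$-stable.

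The second claim is the character identity $\Char(\mathcal{U}(\mathfrak{p}_i)M) = \xi_i(\Char M)$, and this is the heart of the matter. Restricting to $\SL_2^{(i)}$, the ambient $\mathcal{V}^{\lambda}$ splits into finite-dimensional strings, and on the variables $x_i, x_{i+1}$ the operator $\xi_i$ is the Demazure operator, with $\xi_i(x_i^a x_{i+1}^b) = x_i^a x_{i+1}^b + x_i^{a-1}x_{i+1}^{b+1} + \cdots + x_i^b x_{i+1}^a$ for $a \geq b$ being exactly the character of a full $\SL_2$-string of top weight $(a,b)$. One is tempted to conclude the identity for every $\mathfrak{b}_n$-stable $M$, but this is false: if $M$ meets a string in a non-extremal top segment, $\xi_i$ over-counts interior weights — for instance, applied to the top two vectors of a string of length three it records the central weight with multiplicity two, whereas $\mathcal{U}(\mathfrak{p}_i)M$ records it once. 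The correct reading is that $\xi_i(\Char M)$ computes the Euler characteristic $\sum_j (-1)^j \Char H^j(X_w, \mathcal{L}_{\lambda})$ of a line bundle on the Schubert variety $X_w$, and the over-counting is precisely the contribution of $H^{>0}$.

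I therefore expect the genuine obstacle to be the vanishing $H^{j}(X_w, \mathcal{L}_{\lambda}) = 0$ for $j > 0$ — equivalently, the excellent-filtration property of Demazure modules that forces $\xi_i$ to compute the enlarged character with no over-counting. This is exactly the point at which Demazure's original argument was incomplete, and I would supply it following Andersen via Frobenius splitting of Schubert varieties in positive characteristic (in the spirit of Mehta--Ramanathan and Ramanan--Ramanathan), descending to characteristic $0$ by semicontinuity. Granting this vanishing, the two claims give $\Char(\mathcal{V}^{\lambda}_{s_i(\alpha)}) = \xi_i(\Char \mathcal{V}^{\lambda}_{\alpha})$, which is the second defining property of $\mathcal{K}$; the induction then closes and uniqueness of the key polynomials yields $\Char(\mathcal{V}^{\lambda}_{\sigma(\lambda)}) = \mathcal{K}_{\sigma(\lambda)}$.
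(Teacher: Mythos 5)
The paper offers no proof of this statement: it is imported wholesale from Andersen \cite{Andersen1985}, so there is no internal argument to compare yours against. Your sketch is the standard proof strategy, and it is essentially sound: induct along the defining recursion of the key polynomials, reduce each step to the minimal parabolic $\mathfrak{p}_i = \mathfrak{b}_n + \mathbb{C}f_i$, establish the module identity $\mathcal{V}^{\lambda}_{s_i(\alpha)} = \mathcal{U}(\mathfrak{p}_i)\mathcal{V}^{\lambda}_{\alpha}$, and then prove the character identity $\Char(\mathcal{U}(\mathfrak{p}_i)M) = \xi_i(\Char M)$. Most importantly, you correctly locate where all of the difficulty is concentrated: the character identity is false for an arbitrary $\mathfrak{b}_n$-stable $M$ (your over-counting example is exactly right), and for Demazure modules it is equivalent to the vanishing of higher cohomology of the relevant line bundles on Schubert varieties, i.e.\ to the excellent-filtration property. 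This is precisely the gap in Demazure's 1974 argument that Andersen, Mehta--Ramanathan, Ramanan--Ramanathan, and Joseph closed, and Frobenius splitting in characteristic $p$ followed by semicontinuity is the standard route. Since you invoke that vanishing rather than prove it, what you have is a correct reduction of the theorem to its one deep input --- which is an entirely reasonable stopping point here, and consistent with the paper's own treatment of the result as a citation.

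Two small points to tighten if you were to write this out in full. First, the assertion that $e_i^{\,m}f_i^{\,m}v$ is a nonzero multiple of $v$ (with $m = \alpha_i - \alpha_{i+1}$) relies on $e_i v = 0$, which holds because $\sigma(\lambda)$ is extremal so $\sigma(\lambda) + \alpha_i$ is not a weight of $\mathcal{V}^{\lambda}$ when $\langle \sigma(\lambda),\alpha_i^{\vee}\rangle > 0$; this deserves a line of justification. Second, the ``standard structural fact'' that $\mathcal{V}^{\lambda}_{s_i(\alpha)}$ is $\mathfrak{p}_i$-stable in the length-increasing direction is usually proved as part of the same induction (on the length of the minimal coset representative), so the induction hypotheses should be arranged to carry both the stability statement and the character statement along together.
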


\subsection{Symmetric Functions}

\begin{defn}
    Let $\Par$ denote the set of all partitions. We will write $\ell(\lambda)$ for the length of the partition $\lambda$ and $|\lambda|$ for the size of the partition. Given a commutative ring $R$ and an infinite set of free variables $X = \{x_1,x_2,x_3,\ldots\}$ we define $\Lambda_R(X)$ to be the \textit{\textbf{ring of symmetric functions}} over $R$ in the variables $X.$ When $R= \mathbb{Q}$, we simply write $\Lambda(X):= \Lambda_{\mathbb{Q}}(X).$ We will denote by $m_{\lambda}(X)$ and $s_{\lambda}(X)$ respectively the monomial and Schur symmetric functions respectively. Given a collection of infinite sets of free variables we will write $$\Lambda_R(X_1|\cdots|X_r):=\Lambda_R(X_1)\otimes_R \cdots \otimes_R \Lambda_R(X_r)$$ for the \textit{\textbf{ring of $r$ multi-symmetric functions}} over $R$. If $R=\mathbb{Q}$ we write $\Lambda(X_1|\cdots|X_r):= \Lambda_{\mathbb{Q}}(X_1|\cdots | X_r).$ Similarly, we write $$m_{(\lambda^{(1)}|\cdots|\lambda^{(r)})}(X_1|\cdots|X_r):= m_{\lambda^{(1)}}(X_1)\cdots m_{\lambda^{(r)}}(X_r),$$
    and $$s_{(\lambda^{(1)}|\cdots|\lambda^{(r)})}(X_1|\cdots|X_r):= s_{\lambda^{(1)}}(X_1)\cdots s_{\lambda^{(r)}}(X_r)$$ for the $r$ multi-symmetric monomial, and Schur functions respectively.
\end{defn}

Note that the sets $\{m_{(\lambda^{(1)}|\cdots|\lambda^{(r)})}\}_{\underline{\lambda} \in \Par^r}$ and $\{s_{(\lambda^{(1)}|\cdots|\lambda^{(r)})}\}_{\underline{\lambda} \in \Par^r}$ are bases for $\Lambda(X_1|\cdots|X_r).$

\begin{defn}
    We define the \textbf{\textit{plethystic exponential}} as the element of the graded completion of $\Lambda_{\mathbb{Z}}(X)$ given by 
    $$\Omega(X):= \sum_{\lambda} m_{\lambda}(X).$$   
\end{defn}

\section{Stability}

This paper concerns a multi-symmetric generalization of the Schur functions derived from certain special stable-limits of key polynomials. We begin by first examining some general results about the stability of key polynomials and then move to the multi-symmetric situation specifically.

\subsection{Key Polynomial Stability}

The following lemma will be fundamental for the remainder of this paper.

\begin{lem}[Key Polynomial Stability]\label{stability lemma}
    For all compositions $\alpha,\beta$
    $$\cK_{\alpha*0*\beta}(x_1,\ldots, x_{\ell(\alpha)+\ell(\beta)+1})|_{x_{\ell(\alpha)+1}=0} = \cK_{\alpha*\beta}(x_1,\ldots, x_{\ell(\alpha)},x_{\ell(\alpha)+2},\ldots,x_{\ell(\alpha)+\ell(\beta)+1}).$$
\end{lem}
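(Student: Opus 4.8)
The plan is to prove the stability lemma by induction on the number of sorting steps required to reach the composition $\alpha*0*\beta$ from its dominant rearrangement, tracking how the substitution $x_{\ell(\alpha)+1}=0$ interacts with the isobaric divided difference operators $\xi_i$. First I would set $m := \ell(\alpha)$ and $\ell := \ell(\beta)$, so the distinguished zero sits in position $m+1$ among the $m+\ell+1$ variables, and I would think of the specialization $x_{m+1}=0$ as a ring map $\phi: \mathbb{Q}[x_1,\ldots,x_{m+\ell+1}] \to \mathbb{Q}[x_1,\ldots,x_m,x_{m+2},\ldots,x_{m+\ell+1}]$. The goal is to show $\phi(\cK_{\alpha*0*\beta}) = \cK_{\alpha*\beta}$ where the right-hand side is taken in the variables with index $m+1$ deleted.

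\medskip

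The key technical step is a commutation statement: I would show that $\phi \circ \xi_i = \xi_i' \circ \phi$ for all indices $i \neq m, m+1$, where $\xi_i'$ is the isobaric operator acting on the appropriate pair of surviving variables, and separately analyze the borderline indices $i=m$ and $i=m+1$. The cleanest route is probably to build $\cK_{\alpha*0*\beta}$ from its dominant rearrangement via a reduced sequence of transpositions and argue that one can choose this sequence so that the entry $0$ in position $m+1$ is never moved and never participates in a strict-descent swap until the very end — this is possible because $0$ is a minimal part, so whenever it is adjacent to a larger entry it sits on the \emph{right}, meaning the pair is already in weakly-increasing-to-the-right order and no $\xi$ is applied across it. Concretely, both $\alpha*0*\beta$ and $\alpha*\beta$ sort to the same dominant weight (the zero just contributes an extra trailing zero part), so I can realize the construction of $\cK_{\alpha*0*\beta}$ by first building $\cK_{\alpha*0*\beta}$ using only operators $\xi_i$ with $i \neq m, m+1$ acting exactly as the operators building $\cK_{\alpha*\beta}$ do, with the spectator variable $x_{m+1}$ carried along passively.

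\medskip

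The main obstacle I anticipate is making the ``$0$ is a passive spectator'' argument rigorous rather than heuristic: I need to verify that a dominant weight rearranging to $\alpha*0*\beta$ can be reached by a reduced word avoiding the adjacent transpositions $s_m$ and $s_{m+1}$ in positions that would force a divided difference across the distinguished slot. An alternative and perhaps safer approach is to use the combinatorial formula in Proposition \ref{key poly combinatorial formula}: I would exhibit a bijection between those non-attacking fillings in $\mathcal{L}'(\alpha*0*\beta)$ whose monomial survives the substitution $x_{m+1}=0$ (i.e.\ fillings using no label equal to $m+1$ in the length-zero column... ) and the fillings in $\mathcal{L}'(\alpha*\beta)$ after reindexing labels above $m+1$ downward by one. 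The length-zero column at position $m+1$ contributes only its bottom augmented-row box, so deleting it should match the deletion of a zero part; I would check that the attacking relations, the weakly-increasing-upward condition, and the co-inversion triple conditions are all preserved under this deletion-and-reindexing, with the key point being that a column of height $0$ creates no legs and participates in arms only through its basement entry. I expect the bookkeeping around arm membership for the borderline columns to be the delicate part, so I would verify it is consistent with the index shift before concluding $\phi(\cK_{\alpha*0*\beta}) = \cK_{\alpha*\beta}$.
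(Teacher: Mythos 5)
Your first route contains a claim that is false, and it is false in exactly the place where the real content of the lemma lives. You assert that the zero in position $m+1$ (where $m=\ell(\alpha)$) can be carried along as a passive spectator because ``whenever it is adjacent to a larger entry it sits on the right.'' But in the target composition $\alpha*0*\beta$ the zero sits to the \emph{left} of $\beta_1,\ldots,\beta_{\ell(\beta)}$; in the decreasing rearrangement all zeros sit at the far right, so the distinguished zero must be transported leftward past every positive part of $\beta$, and each such transport is precisely an application of $\xi_{m+1},\ldots,\xi_{m+\ell(\beta)}$ across the distinguished slot. Already for $\alpha=\emptyset$, $\beta=(1)$ one has $\cK_{(0,1)}=\xi_1(x_1)=x_1+x_2$, so $\xi_{m+1}$ cannot be avoided. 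No reduced word avoiding $s_m$ and $s_{m+1}$ exists in general, so the inductive scheme you propose never gets off the ground. Your fallback route via Proposition \ref{key poly combinatorial formula} is plausible, but as written it is a checklist rather than a proof: the delicate point you yourself flag --- that the height-zero column's basement box $(m+1,0)$ lies in $\arm^{\mathright}$ of boxes $(i,1)$ with $i<m+1$, and hence genuinely participates in attacking pairs and co-inversion triples --- is exactly the part that needs to be verified, and you defer it.

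For comparison, the paper's proof embraces the operators you try to avoid. Writing $\varphi_i(f)=f|_{x_i=0}$, one checks the intertwining relation $\varphi_i\xi_i=s_i\varphi_{i+1}$ directly from the definition of $\xi_{x,y}$, and uses the recursion to write $\cK_{\alpha*0*\beta}=\xi_{m+1}\cdots\xi_{m+\ell(\beta)}(\cK_{\alpha*\beta*0})$. The relation then pushes the specialization rightward through the whole chain,
$$\varphi_{m+1}\xi_{m+1}\cdots\xi_{m+\ell(\beta)}=s_{m+1}\cdots s_{m+\ell(\beta)}\varphi_{m+\ell(\beta)+1},$$
so that the evaluation lands on the last variable, where the trailing-zero case is immediate, and the product of simple transpositions merely relabels $x_{m+1},\ldots,x_{m+\ell(\beta)}$ as $x_{m+2},\ldots,x_{m+\ell(\beta)+1}$. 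If you want to salvage your approach, I would recommend proving this commutation identity rather than trying to engineer a reduced word that sidesteps the distinguished position.
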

\begin{proof}
    For all $1\leq i \leq \ell(\alpha)+\ell(\beta)+1$ let $\varphi_i(f):= f|_{x_i=0}.$ It is straightforward to check that $\varphi_i\xi_i = s_i \varphi_{i+1}$ for all $1\leq i \leq \ell(\alpha)+\ell(\beta).$ We now directly compute: 
    \begin{align*}
        &\cK_{\alpha*0*\beta}|_{x_{\ell(\alpha)+1}=0} \\
        &= \varphi_{\ell(\alpha)+1} \left( \cK_{\alpha*0*\beta} \right)\\
        &= \varphi_{\ell(\alpha)+1}\xi_{\ell(\alpha)+1}\cdots \xi_{\ell(\alpha)+\ell(\beta)}(\cK_{\alpha*\beta})\\
        &= s_{\ell(\alpha)+1}\cdots s_{\ell(\alpha)+\ell(\beta)} \varphi_{\ell(\alpha)+\ell(\beta)+1}(\cK_{\alpha*\beta})\\
        &= s_{\ell(\alpha)+1}\cdots s_{\ell(\alpha)+\ell(\beta)}(\cK_{\alpha*\beta})\\
        &= \cK_{\alpha*\beta}(x_1,\ldots, x_{\ell(\alpha)},x_{\ell(\alpha)+2},\ldots,x_{\ell(\alpha)+\ell(\beta)+1}).\\
    \end{align*}
\end{proof}

\begin{example}
    We compute directly
    $$\cK_{1001}(x_1,x_2,x_3,x_4) = x_1(x_2+x_3+x_4)$$ and 
    $$\cK_{101}(x_1,x_2,x_3) = x_1(x_2+x_3)$$ so 
    $$\cK_{1001}(x_1,x_2,0,x_4) = \cK_{101}(x_1,x_2,x_4).$$ For another example, consider  
    $$\cK_{110020}(x_1,x_2,x_3,x_4,x_5,x_6) = (x_1^2x_2+x_1x_2^2)(x_3+x_4+x_5) + x_1x_2(x_3^2+x_4^2+x_5^2 + x_3x_4+x_3x_5+x_4x_5)$$ and
    $$\cK_{11020}(x_1,x_2,x_3,x_4,x_5) = (x_1^2x_2+x_1x_2^2)(x_3+x_4) + x_1x_2(x_3^2+x_4^2+ x_3x_4)$$ so 
    $$\cK_{110020}(x_1,x_2,x_3,0,x_5,x_6) = \cK_{11020}(x_1,x_2,x_3,x_5,x_6).$$

    Lastly, consider the simple example 
    $$\cK_{20010010}(x_1,x_2,x_3,x_4,x_5,x_6,x_7,x_8) = x_1^2(x_2x_3+x_2x_4+x_3x_4)+x_1^2(x_2+x_3+x_4)(x_5+x_6+x_7)$$ and 
    $$\cK_{2010010}(x_1,x_2,x_3,x_4,x_5,x_6,x_7) = x_1^2x_2x_3 + x_1^2(x_2+x_3)(x_4+x_5+x_6)$$ so 
    $$\cK_{20010010}(x_1,x_2,0,x_4,x_5,x_6,x_7,x_8) =\cK_{2010010}(x_1,x_2,x_4,x_5,x_6,x_7,x_8).$$
\end{example}

Applying Lemma \ref{stability lemma} yields the following result:

\begin{thm}\label{general key stability thm}
    For any compositions $\beta,\alpha_1,\ldots,\alpha_r$ the following limit exists as a formal power series:
    \begin{align*}
        &F_{(\alpha_1|\cdots |\alpha_r)}^{\beta} :=\\
        &\lim_{n_1,\ldots , n_r \rightarrow \infty}\cK_{\beta*0^{n_1}*\alpha_1*\cdots * 0^{n_r}*\alpha_r}(y_1,\ldots,y_{\ell(\beta)}, x_{1,1},\ldots,x_{1,n_{1}},z_{1,1},\ldots,z_{1,\ell(\alpha_1)},\ldots, x_{r,1},\ldots,x_{r,n_r}z_{r,1},\ldots,z_{r,\ell(\alpha_r)}).
    \end{align*}

\end{thm}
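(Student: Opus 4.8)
The plan is to reduce the whole statement to a single-step consistency relation extracted from Lemma \ref{stability lemma}, and then to conclude coefficient-wise stabilization. First I would record the two structural facts that make the limit well-behaved. Each key polynomial $\cK_\gamma$ is homogeneous of degree $|\gamma| = \sum_j \gamma_j$: this holds because $\cK_\gamma = x^\gamma$ in the dominant case and each $\xi_i$ preserves total degree, as dividing the degree-$(d{+}1)$ numerator $x_i f - x_{i+1}s_i f$ by $x_i - x_{i+1}$ returns a homogeneous polynomial of degree $d$. Moreover, inserting zero parts into a composition leaves $|\gamma|$ unchanged. Hence every polynomial in the defining sequence is homogeneous of the fixed degree $d = |\beta| + |\alpha_1| + \cdots + |\alpha_r|$, so the putative limit will be a homogeneous formal power series of degree $d$, and it suffices to prove that the coefficient of each fixed monomial stabilizes.

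The heart of the argument is the consistency relation obtained by applying Lemma \ref{stability lemma} to insert one extra zero into the $i$-th block of zeros. For the increment $n_i \mapsto n_i+1$ I would set, in the notation of the lemma,
$$\alpha_{\mathrm{lem}} = \beta * 0^{n_1} * \alpha_1 * \cdots * \alpha_{i-1} * 0^{n_i}, \qquad \beta_{\mathrm{lem}} = \alpha_i * 0^{n_{i+1}} * \cdots * 0^{n_r} * \alpha_r,$$
so that $\alpha_{\mathrm{lem}} * 0 * \beta_{\mathrm{lem}}$ is the index with $n_i+1$ zeros in block $i$ and $\alpha_{\mathrm{lem}} * \beta_{\mathrm{lem}}$ is the index with $n_i$ zeros, the other blocks untouched. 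Since $\ell(\alpha_{\mathrm{lem}})+1$ is exactly the position of the newly appended variable $x_{i,n_i+1}$, Lemma \ref{stability lemma} gives
$$\cK_{\beta * 0^{n_1} * \cdots * 0^{n_i+1} * \alpha_i * \cdots * 0^{n_r} * \alpha_r}\big|_{x_{i,n_i+1}=0} = \cK_{\beta * 0^{n_1} * \cdots * 0^{n_i} * \alpha_i * \cdots * 0^{n_r} * \alpha_r},$$
with the remaining variables carried over unchanged. The only real care needed here is the index bookkeeping to confirm that the inserted zero lands inside the $i$-th block and that the specialized variable is precisely $x_{i,n_i+1}$; I expect this to be the sole (and minor) obstacle, as the theorem is otherwise a direct consequence of the lemma.

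With the consistency relation in hand the limit follows formally. Setting a variable to $0$ annihilates exactly the monomials containing it and fixes all others, so for any monomial $M$ not involving $x_{i,n_i+1}$ its coefficient agrees in the two key polynomials displayed above. Now fix a monomial $M$ in the limit variables $y_\bullet$, $z_{\bullet,\bullet}$, $x_{\bullet,\bullet}$; it involves only finitely many variables, say no $x_{i,j}$ with $j > N_i$. Then for every tuple with all $n_i \geq N_i$, each admissible increment $n_i \mapsto n_i+1$ leaves the coefficient of $M$ unchanged, so by composing increments the coefficient of $M$ is constant on the region $\{\, n_i \geq N_i \ \forall i \,\}$ and in particular independent of $(n_1,\ldots,n_r)$ there. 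This is precisely coefficient stabilization, so the multi-index limit exists coefficient-wise and defines a formal power series $F_{(\alpha_1|\cdots|\alpha_r)}^{\beta}$, independent of the order or rate in which the $n_i$ tend to infinity; the homogeneity remark guarantees that the resulting object is a genuine power series rather than merely a compatible family of coefficients.
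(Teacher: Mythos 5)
Your proposal is correct and follows exactly the route the paper takes: the paper's entire argument for Theorem \ref{general key stability thm} is the single line ``Applying Lemma \ref{stability lemma} yields the following result,'' and your write-up simply makes explicit the choice of $\alpha_{\mathrm{lem}}$ and $\beta_{\mathrm{lem}}$, the identification of the specialized variable with $x_{i,n_i+1}$, and the coefficient-wise stabilization that the paper leaves implicit. The index bookkeeping you flag checks out, so this is a faithful (and more detailed) version of the paper's proof.
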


We, in fact, find that 
$F_{(\alpha_1|\cdots |\alpha_r)}^{\beta}(y_1,\ldots,y_{\ell(\beta)}|X_1 |z_{1,1},\ldots,z_{1,\ell(\alpha_1)}|\ldots|X_r|z_{r,1},\ldots,z_{r,\ell(\alpha_r)})$ is an element of the ring 
$$\mathbb{Z}[y_1,\ldots,y_{\ell(\beta)},z_{1,1},\ldots,z_{1,\ell(\alpha_1)},\ldots,z_{r,1},\ldots,z_{r,\ell(\alpha_r)}]\otimes \Lambda_{\mathbb{Z}}(X_1|\cdots|X_r).$$ The case of when $\beta$ is arbitrary, $r=1$, and $\alpha = \rev(\lambda)$ for a partition $\lambda$ was considered in the author's previous work on almost symmetric Schur functions $s_{(\mu|\lambda)}(X)$ \cite{BWAlmostSymSchur}. Moreover, 
$$F^{\mu}_{\rev(\lambda)}(y|X) = s_{(\mu|\lambda)}(y_1,\ldots,y_{\ell(\mu)},x_1,x_2,\ldots).$$ In general, the limits \ref{general key stability thm} are complicated but when $\beta = \emptyset$ and $(\alpha_1  |\cdots |\alpha_r)=(\rev(\lambda^{(1)})|\cdots | \rev(\lambda^{(r)}))$ for partitions $\lambda^{(1)},\ldots,\lambda^{(r)}$, we see that
$F_{(\rev(\lambda^{(1)})|\cdots | \rev(\lambda^{(r)}))}^{\emptyset}$ lives in the ring 
$\Lambda_{\mathbb{Z}}(X_1|\cdots|X_r).$ In this paper, we will focus on this special case and discuss the general situation in a later work.

\begin{example}
Here is a simple partially-symmetric example concerning the composition $\alpha = (2,1):$
    \begin{itemize}
        \item $\cK_{21} = x_1^2x_2$
        \item $\cK_{021} = x_1^2x_2 + x_1x_2^2 + x_1^2x_3 + x_1x_2x_3 + x_2^2x_3$
        \item $\cK_{0021} = x_1^2x_2 + x_1x_2^2 + (x_1^2+x_2^2)(x_3+x_4) + x_1x_2(2x_3+x_4) + (x_1+x_2)(x_3^2+x_3x_4) + x_3^2x_4$
        \item $\cK_{00021} = x_1^2x_2 + x_1x_2^2 +x_2^2x_3 + x_1^2x_3+x_1x_3^2+x_2x_3^2 + 2x_1x_2x_3 + (x_1^2+x_2^2+x_3^2+x_1x_2+x_1x_3+x_2x_3)(x_4+x_5) +(x_1x_2+x_1x_3+x_2x_3)x_4 + (x_1+x_2+x_3)(x_4^2+2x_4x_5) + x_4^2x_5$
    \end{itemize}
    Note that we may rewrite $\cK_{00021}$ in the simplified form
    $$\cK_{00021} = s_{2,1}(x_1,x_2,x_3)+s_2(x_1,x_2,x_3)(x_4+x_5) + s_{1,1}(x_1,x_2,x_3)x_4 + s_1(x_1,x_2,x_3)(x_4^2+x_4x_5) + x_4^2x_5$$
    which shows that 
    $$F^{\emptyset}_{(2,1)}(X|z_1,z_2) = s_{2,1}(X)+s_2(X)(z_1+z_2) + s_{1,1}(X)z_1 + s_1(X)(z_1^2+z_1z_2) + z_1^2z_2.$$
    In fact, this function has more symmetry:
    $$F^{\emptyset}_{(2,1)}(X|z_1,z_2) = s_{2,1}(X+z_1)+s_2(X+z_1)z_2.$$ 
    This property may be read off directly from the compositions $0^n*(2,1)$ since the full substring $0^n*2$ is weakly increasing and not just the substring $0^n$. In general, the $F_{(\alpha_1|\cdots |\alpha_r)}^{\beta}$ have similar additional symmetries whenever $(\alpha_1|\cdots |\alpha_r) \neq \emptyset.$
\end{example}

\begin{example}
\label{2-multisym example of key poly}    Here we give a multi-symmetric example concerning the partitions $\lambda^{(1)} = 2$ and $\lambda^{(2)} = 1:$
    \begin{itemize}
        \item $\cK_{21} = x_1^2x_2$
        \item $ \cK_{0201} = x_1^2x_2+x_1x_2^2 + (x_1^2+x_1x_2+x_2^2)(x_3+x_4)$
        \item $\cK_{002001}= x_1^2x_2 + x_1x_2^2 +x_2^2x_3 + x_1^2x_3+x_1x_3^2+x_2x_3^2 + 2x_1x_2x_3 + (x_1^2+x_2^2+x_3^2+x_1x_2+x_1x_3+x_2x_3)(x_4+x_5+x_6).$
    \end{itemize}
    This means that 
    $$F^{\emptyset}_{(2|1)}(X_1|X_2) = s_{2}(X_1)s_1(X_2) + s_{2,1}(X_1).$$ It is not a coincidence that this example is highly related to the example $F^{\emptyset}_{(2,1)}(X|z_1,z_2)$. In general, there are many algebraic relationships between the functions $F_{(\alpha_1|\cdots |\alpha_r)}^{\beta}$ which will be explored in a future paper.
\end{example}

\subsection{Multi-symmetric Limits}

As we will be focusing on the multi-symmetric limits of key polynomials specifically, it will be helpful to establish some notation that will be suited to that context.

\begin{defn}\label{multi-symmetric Schur poly defn}
    Let $\cP_{(n_1|\cdots|n_r)}:= \bigotimes_{i=1}^{r} \mathbb{Q}[x_{i,1},\ldots,x_{i,n_i}].$ We let $ \mathfrak{S}^{(1)}_{n_1}\times \cdots \times \mathfrak{S}^{(r)}_{n_r}$ act on $\cP_{(n_1|\cdots|n_r)}$ via permutations of the variables $x_{i,1},\ldots,x_{i,n_i}$ where each $\mathfrak{S}^{(i)}_{n_i}$ is a copy of $\mathfrak{S}_{n_i}$. We will write $\Lambda(X_{1}^{(n_1)}|\cdots|X_{r}^{(n_r)}):= \cP_{(n_1|\cdots|n_r)}^{\mathfrak{S}^{(1)}_{n_1}\times \cdots \times \mathfrak{S}^{(r)}_{n_r}}$ where each $X_{i}^{(n_i)}$ denotes a symmetrized variable set $X_{i}^{(n_i)} = x_{i,1}+\ldots+x_{i,n_i}.$ Let $(\lambda^{(1)}|\ldots|\lambda^{(r)})$ be an r-tuple of partitions. For all $(n_1|\cdots|n_r)$ with $n_i \geq \ell(\lambda^{(i)})$ define the \textbf{\textit{multi-symmetric Schur polynomial}} $$\cS_{(\lambda^{(1)}|\ldots|\lambda^{(r)})}^{(n_1|\cdots|n_r)}(X_{1}^{(n_1)}|\cdots|X_{r}^{(n_r)}):= \cK_{(0^{n_1-\ell(\lambda^{(1)})}*\rev(\lambda^{(1)}))*\cdots * (0^{n_r-\ell(\lambda^{(r)})}*\rev(\lambda^{(r)}))}(x_{1,1},\ldots,x_{1,n_1},\ldots,x_{r,1},\ldots,x_{r,n_r}).$$  
\end{defn}

Note that $\cS_{(\lambda^{(1)}|\ldots|\lambda^{(r)})}^{(n_1|\cdots|n_r)}(X_{1}^{(n_1)}|\cdots|X_{r}^{(n_r)})$ is indeed well-defined as a polynomial in the symmetrized variable sets $X_{1}^{(n_1)},\ldots,X_{r}^{(n_r)}$ since the composition $(0^{n_1-\ell(\lambda^{(1)})}*\rev(\lambda^{(1)}))*\cdots * (0^{n_r-\ell(\lambda^{(r)})}*\rev(\lambda^{(r)}))$ is weakly increasing along each of the windows $\{n_1+\ldots + n_{i-1} +1,\ldots,n_1+\ldots + n_{i-1} +n_{i}\}$ for $1 \leq i \leq r.$

\begin{defn}
    For $1\leq i \leq r$ and $n_i \geq 1$ we define the map 
    $\Xi^{(n_i)}_i: \cP_{(n_1|\cdots|n_r)} \rightarrow \cP_{(n_1|\cdots|n_{i-1}|n_i-1|n_{i+1}|\cdots|n_r)}$ by 
    $$\Xi^{(n_i)}_i(f) := f|_{x_{i,n_i}=0}.$$ 
\end{defn}

We may restate Lemma \ref{stability lemma} in this context as follows.

\begin{lem}\label{mutlisym stability lemma}
    For all $(\lambda^{(1)}|\cdots |\lambda^{(r)}) \in \Par^r$ and $(n_1|\cdots|n_r)$ with $n_j \geq \ell(\lambda^{(j)})$ for all $1\leq j \leq r$ and $n_i \geq \ell(\lambda^{(i)})+1$ for some $1 \leq i \leq r$, 
    $$\Xi_{i}^{(n_i)}\left(\cS_{(\lambda^{(1)}|\cdots |\lambda^{(r)})}^{(n_1|\cdots|n_r)} \right) = \cS_{(\lambda^{(1)}|\cdots |\lambda^{(r)})}^{(n_1|\cdots|n_{i-1}|n_i-1|n_{i+1}|\cdots|n_r)}.$$ 
\end{lem}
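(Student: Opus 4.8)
The plan is to read this statement off from Lemma \ref{stability lemma} after matching up the multi-variable indexing, identifying the global ordered list $x_{1,1},\ldots,x_{1,n_1},\ldots,x_{r,1},\ldots,x_{r,n_r}$ with a single alphabet $x_1,\ldots,x_N$, where $N=n_1+\cdots+n_r$. Write $\gamma := (0^{n_1-\ell(\lambda^{(1)})}*\rev(\lambda^{(1)}))*\cdots*(0^{n_r-\ell(\lambda^{(r)})}*\rev(\lambda^{(r)}))$ for the composition defining $\cS_{(\lambda^{(1)}|\cdots|\lambda^{(r)})}^{(n_1|\cdots|n_r)} = \cK_\gamma$. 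The one genuine wrinkle is that $\Xi_i^{(n_i)}$ kills the \emph{last} variable $x_{i,n_i}$ of the $i$-th block, but the entry of $\gamma$ in that position equals $\lambda^{(i)}_1$ (the largest part), not $0$, so Lemma \ref{stability lemma} does not apply to $x_{i,n_i}$ on the nose.

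To get around this I would first use the block symmetry noted after Definition \ref{multi-symmetric Schur poly defn}: since $\gamma$ is weakly increasing along the $i$-th window, $\cK_\gamma$ is symmetric in $x_{i,1},\ldots,x_{i,n_i}$. Hence substituting $0$ for any one of these variables produces the same symmetric polynomial in the remaining $n_i-1$ of them, and so
$$\Xi_i^{(n_i)}\!\left(\cS_{(\lambda^{(1)}|\cdots|\lambda^{(r)})}^{(n_1|\cdots|n_r)}\right) = \cK_\gamma\big|_{x_{i,n_i}=0} = \cK_\gamma\big|_{x_{i,1}=0},$$
the last equality understood after the relabeling $x_{i,j}\mapsto x_{i,j-1}$, which is harmless because the outcome is again symmetric in its $i$-th block.

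Next I would factor $\gamma$ for Lemma \ref{stability lemma}. Since $n_i\ge \ell(\lambda^{(i)})+1$, the $i$-th block opens with at least one $0$, so $\gamma = \alpha*0*\beta$ with $\alpha$ the concatenation of the first $i-1$ block-compositions (giving $\ell(\alpha)=n_1+\cdots+n_{i-1}$, so position $\ell(\alpha)+1$ is exactly the slot of $x_{i,1}$) and $\beta = (0^{n_i-\ell(\lambda^{(i)})-1}*\rev(\lambda^{(i)}))*(0^{n_{i+1}-\ell(\lambda^{(i+1)})}*\rev(\lambda^{(i+1)}))*\cdots*(0^{n_r-\ell(\lambda^{(r)})}*\rev(\lambda^{(r)}))$. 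Lemma \ref{stability lemma} then gives $\cK_\gamma|_{x_{i,1}=0}=\cK_{\alpha*\beta}$ in the variables with $x_{i,1}$ deleted.

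Finally I would check that $\alpha*\beta$ is precisely the composition defining $\cS_{(\lambda^{(1)}|\cdots|\lambda^{(r)})}^{(n_1|\cdots|n_i-1|\cdots|n_r)}$---its $i$-th block is now $0^{(n_i-1)-\ell(\lambda^{(i)})}*\rev(\lambda^{(i)})$---and that the surviving variable set matches the standard one for this smaller polynomial after the relabeling above. This identifies the right-hand side and finishes the argument. I expect the only real work to be this index bookkeeping (confirming $\ell(\alpha)+1$ is the slot of $x_{i,1}$ and that $\alpha*\beta$ reproduces the shortened composition verbatim) together with a careful statement of the symmetry reduction; all of the analytic content is carried by Lemma \ref{stability lemma}.
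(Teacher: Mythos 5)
Your proposal is correct and follows the route the paper intends: the paper offers no separate proof, presenting the lemma as a direct restatement of Lemma \ref{stability lemma}, and your argument is exactly that restatement carried out carefully. The one detail you supply that the paper leaves implicit --- using the symmetry of $\cK_\gamma$ in the $i$-th window (noted after Definition \ref{multi-symmetric Schur poly defn}) to move the deleted variable $x_{i,n_i}$ to the slot of the leading $0$ guaranteed by $n_i \geq \ell(\lambda^{(i)})+1$ --- is precisely the bookkeeping needed to make Lemma \ref{stability lemma} apply, and you handle it correctly.
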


We may now define the main object of this paper.

\begin{defn}\label{multi-sym Schur function defn}
    For $(\lambda^{(1)}|\cdots |\lambda^{(r)}) \in \Par^r,$ define the \textbf{\textit{multi-symmetric Schur function}} 
    $$\cS_{(\lambda^{(1)}|\cdots |\lambda^{(r)})}(X_1 | \cdots |X_r) := \lim_{n_1,\ldots,n_r \rightarrow \infty} \cS_{(\lambda^{(1)}|\cdots |\lambda^{(r)})}^{(n_1|\cdots|n_r)}(X_{1}^{(n_1)} | \cdots |X_{r}^{(n_r)}).$$
\end{defn}

Note that the multi-symmetric Schur function $\cS_{(\lambda^{(1)}|\cdots |\lambda^{(r)})}(X_1 | \cdots |X_r)$ is homogeneous of total degree $|\lambda^{(1)}|+\ldots + |\lambda^{(r)}|.$ However, the $\cS_{(\lambda^{(1)}|\cdots |\lambda^{(r)})}$ are not homogeneous with respect to the finer grading in each variable set separately as the following examples demonstrate. 

\begin{example}
Below are the first few examples of multi-symmetric Schur functions expanded into the Schur basis for $r = 2$:
    \begin{multicols}{2}
    \begin{itemize}
        \item $\cS_{(\emptyset|\emptyset)} = 1$
        \item $\cS_{(1|\emptyset)} = s_{(1|\emptyset)}$
        \item $\cS_{(\emptyset|1)} = s_{(\emptyset|1)} + s_{(1|\emptyset)} $
        \item $\cS_{(2|\emptyset)} = s_{(2|\emptyset)}$
        \item $\cS_{(1,1|\emptyset)} = s_{(1,1|\emptyset)}$
        \item $\cS_{(1|1)} = s_{(1|1)}+s_{(1,1|\emptyset)}$
        \item $\cS_{(\emptyset| 2)} = s_{(\emptyset|2)}+ s_{(1|1)} + s_{(2|\emptyset)}$
         \item $\cS_{(\emptyset|1,1)} = s_{(\emptyset|1,1)} + s_{(1|1)}+s_{(1,1|\emptyset)}$
        \item $\cS_{(3|\emptyset)} = s_{(3|\emptyset)}$
        \item $\cS_{(2,1|\emptyset)} = s_{(2,1|\emptyset)}$
        \item $\cS_{(1,1,1|\emptyset)} = s_{(1,1,1|\emptyset)}$
        \item $\cS_{(2|1)} = s_{(2|1)}+s_{(2,1|\emptyset)}$
        \item $\cS_{(1,1|1)} = s_{(1,1|1)}+s_{(1,1,1|\emptyset)}$
        \item $\cS_{(1|2)} = s_{(1|2)}+s_{(1,1|1)}+s_{(2|1)}+s_{(2,1|\emptyset)}$
        \item $\cS_{(1|1,1)} = s_{(1|1,1)}+s_{(1,1,1|\emptyset)} + s_{(1,1|1)}$
        \item $\cS_{(\emptyset|3)} = s_{(\emptyset|3)} + s_{(1|2)}+s_{(2|1)}+s_{(3|\emptyset)}$
        \item $\cS_{(\emptyset|2,1)}= s_{(\emptyset|2,1)}+s_{(1|1,1)} +s_{(1|2)}+s_{(1,1|1)}+s_{(2|1)}+s_{(2,1|\emptyset)}$
        \item $\cS_{(\emptyset|1,1,1)}= s_{(\emptyset|1,1,1)}+s_{(1|1,1)}+s_{(1,1|1)}+s_{(1,1,1|\emptyset)}$
        
    \end{itemize}
    \end{multicols}

Notice that the above expansions are all non-negative in the ordinary Schur basis $s_{(\mu^{(1)}|\cdots | \mu^{(r)})}$. Later, we will prove this holds in general (Theorem \ref{Schur positivity theorem}). Generally, the expansion of the $\cS_{(\lambda^{(1)}|\cdots | \lambda^{(r)})}$ into the $s_{(\mu^{(1)}|\cdots | \mu^{(r)})}$ are not multiplicity free. For example, $s_{(2,1|2,1)}$ occurs with coefficient $2$ in $\cS_{(\emptyset|3,2,1)}.$ It is not a coincidence that this is the simplest \textbf{\textit{Littlewood-Richardson coefficient}} $c_{(2,1),(2,1)}^{(3,2,1)} = 2$ which is larger than $1$. See Remark \ref{LR remark} for more details.
\end{example}

\begin{remark}
    The $r=1$ case recovers the usual Schur functions
    $$\cS_{\lambda}(X) = s_{\lambda}(X).$$ Note that in terms of the more general notation in Theorem \ref{general key stability thm},
    $$\cS_{(\lambda^{(1)}|\cdots |\lambda^{(r)})}(X_1|\cdots |X_r) = F^{\emptyset}_{(\rev(\lambda^{(1)})|\cdots |\rev(\lambda^{(r)}))}(X_1|\cdots|X_r)$$ where we have set the $z$-variables to $0.$ Whenever some $\lambda^{(i)} = \emptyset$, we also have the simple relation:
    $$\cS_{(\lambda^{(1)}|\cdots|\emptyset|\cdots|\lambda^{(r)})}(X_1|\cdots|X_i|X_{i+1}|
    \cdots|X_r) =\cS_{(\lambda^{(1)}|\cdots|\lambda^{(i+1)}|\cdots|\lambda^{(r)})}(X_1|\cdots|X_i+X_{i+1}|\cdots|X_r).$$
\end{remark}

Similar to the ordinary Schur functions, the multi-symmetric Schur functions form a basis for the ring of multi-symmetric functions. We first show this result over $\mathbb{Q}$ and deal with the more subtle situation over $\mathbb{Z}$ later in Corollary \ref{Z basis Cor.}. 

\begin{cor}
    The multi-symmetric Schur functions $\{\cS_{(\lambda^{(1)}|\cdots |\lambda^{(r)})}| (\lambda^{(1)}|\cdots |\lambda^{(r)})\in\Par^r\}$ form a $\mathbb{Q}$-basis for $\Lambda(X_1|\cdots|X_r).$
\end{cor}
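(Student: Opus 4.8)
The plan is to establish that the multi-symmetric Schur functions form a $\mathbb{Q}$-basis by showing they span $\Lambda(X_1|\cdots|X_r)$ and that their count matches the dimension in each graded piece. First I would observe that by the definition of $\cS_{(\lambda^{(1)}|\cdots|\lambda^{(r)})}$ as a stable limit and by the homogeneity remark immediately following Definition \ref{multi-sym Schur function defn}, each $\cS_{(\lambda^{(1)}|\cdots|\lambda^{(r)})}$ is a homogeneous element of $\Lambda(X_1|\cdots|X_r)$ of total degree $|\lambda^{(1)}|+\cdots+|\lambda^{(r)}|$. Since the indexing set $\{(\lambda^{(1)}|\cdots|\lambda^{(r)})\in\Par^r\}$ is in bijection with the indexing set of the known monomial (or Schur) basis $\{m_{(\mu^{(1)}|\cdots|\mu^{(r)})}\}$ or $\{s_{(\mu^{(1)}|\cdots|\mu^{(r)})}\}$, and this bijection preserves total degree, it suffices to prove that the change-of-basis matrix from $\{\cS\}$ to one of these known bases is invertible over $\mathbb{Q}$ in each fixed total degree $d$ (where the relevant spaces are finite-dimensional).

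The cleanest route is a triangularity argument with respect to a suitable order on $\Par^r$. I would compute the expansion of $\cS_{(\lambda^{(1)}|\cdots|\lambda^{(r)})}$ in the monomial multi-symmetric basis $\{m_{(\mu^{(1)}|\cdots|\mu^{(r)})}\}$ and argue that the transition matrix is unitriangular (up to nonzero diagonal entries) with respect to a chosen total order refining the containment-of-content structure. Concretely, the leading term should be $m_{(\lambda^{(1)}|\cdots|\lambda^{(r)})}$ itself: from the combinatorial formula for key polynomials (Proposition \ref{key poly combinatorial formula}) together with the stable-limit definition, the filling recording the diagonal reading gives $x^{(0^{n_1-\ell(\lambda^{(1)})}*\rev(\lambda^{(1)}))*\cdots}$ as the unique top monomial in each window, which survives the symmetrization and the limit to produce the monomial $m_{(\lambda^{(1)}|\cdots|\lambda^{(r)})}$ with coefficient $1$. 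All other monomials appearing should be strictly smaller in the chosen order; since the paper already promises the lexicographic triangularity result (Theorem \ref{crude triangularity thm}) and the refined dominance triangularity (Theorem \ref{complete support of multi-sym Schur}), I would simply invoke the available triangularity to conclude that the transition matrix is triangular with unit diagonal, hence invertible over $\mathbb{Q}$.

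The main obstacle, and the place where care is needed, is confirming that the leading monomial is exactly $m_{(\lambda^{(1)}|\cdots|\lambda^{(r)})}$ with coefficient $1$ and that every other monomial support index is strictly dominated. This requires knowing that the stable limit of key polynomials, after symmetrizing each block of variables $X_i$, does not introduce a lower-indexed monomial with larger coefficient in whatever order is chosen, and that no collision occurs between distinct $\cS$'s at the top. Since the precise triangularity statements (Theorems \ref{crude triangularity thm} and \ref{complete support of multi-sym Schur}) are developed later in the paper and I am permitted to assume results stated earlier in the excerpt, the honest approach here — and the one I expect the author takes — is to defer the full combinatorial verification to those later theorems and give a short proof: the $\cS_{(\lambda^{(1)}|\cdots|\lambda^{(r)})}$ are triangular with respect to $\{m_{(\mu^{(1)}|\cdots|\mu^{(r)})}\}$ under a degree-preserving order with nonzero (indeed unit) diagonal, so in each total degree $d$ they are a $\mathbb{Q}$-linearly independent set of the correct cardinality, hence a $\mathbb{Q}$-basis. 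The delicate step to watch is ensuring the order is genuinely a partial (or total) order compatible with finite-dimensionality in each degree, so that triangularity actually forces invertibility rather than merely linear independence within an infinite matrix.
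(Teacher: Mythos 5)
Your proposal is correct, but it takes a genuinely different route from the paper. The paper's own proof is a two-line counting argument: since the number of $\cS_{(\lambda^{(1)}|\cdots|\lambda^{(r)})}$ in each total degree equals the dimension of that graded piece, it suffices to prove linear independence, and this is extracted directly from Lemma \ref{mutlisym stability lemma} together with Proposition \ref{key polys are basis} --- any nontrivial $\mathbb{Q}$-linear relation among the limits would, upon setting all but finitely many variables to zero, restrict to a nontrivial relation among distinct key polynomials $\cK_{(0^{n_1-\ell(\lambda^{(1)})}*\rev(\lambda^{(1)}))*\cdots}$ for $n_i$ large, contradicting their linear independence. You instead prove linear independence (and spanning) via unitriangularity of the transition matrix to the monomial multi-symmetric basis, deferring the verification of the leading term and the strict ordering of the other terms to Theorem \ref{crude triangularity thm} and Corollary \ref{leading coefficient is 1}. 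This is essentially the argument the paper deploys \emph{later} to obtain the stronger $\mathbb{Z}$-basis statement (Corollary \ref{Z basis Cor.}), so it is sound and there is no circularity (those triangularity results rest only on Reiner--Shimozono's triangularity of key polynomials and stability, not on this corollary); but it front-loads combinatorial machinery that the paper deliberately postpones, whereas the paper's route gets the $\mathbb{Q}$-statement immediately from stability alone. Your closing concern about the order is handled correctly: $\prec$ restricted to multi-partitions of a fixed total degree indexes a finite-dimensional space, so triangularity with unit diagonal does force invertibility there.
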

\begin{proof}
    There are sufficiently many $\cS_{(\lambda^{(1)}|\cdots |\lambda^{(r)})}$ in each degree so it suffices to show that they are linearly independent; but this is a consequence of the stability in Lemma \ref{mutlisym stability lemma} combined with Proposition \ref{key polys are basis}.
\end{proof}

\section{Monomial Expansion}

This section focuses on analyzing the monomial expansions of the multi-symmetric Schur functions. First, we give a diagrammatic rule for their monomial expansions (Theorem \ref{combinatorial formula for multi-sym Schur}) derived from the known combinatorial formula for key polynomials (Proposition \ref{key poly combinatorial formula}). We use this to give a combinatorial formula for the coefficients of the monomial multi-symmetric function expansions of the multi-symmetric Schur functions (Theorem \ref{Kostka coeff thm}). Next, we give two different partial orders on $\Par^r$, $\prec$ and $\triangleleft$, which describe the triangularity of the monomial multi-symmetric expansions of the multi-symmetric Schur functions (Theorems \ref{crude triangularity thm} and \ref{complete support of multi-sym Schur}). As result, we show that the multi-symmetric Schur functions are a $\mathbb{Z}$-basis of $\Lambda_{\mathbb{Z}}(X_1|\cdots |X_r).$

\subsection{Combinatorial Formula}

In order to state and prove the combinatorial formula Theorem \ref{combinatorial formula for multi-sym Schur}, we will need to introduce a few combinatorial constructions.

\begin{defn}
    Given $(\lambda^{(1)}|\cdots |\lambda^{(r)}) \in \Par^r$ and $(n_1|\cdots|n_r)$ with $n_i \geq \ell(\lambda^{(i)})$ for all $1\leq i \leq r,$ define the diagrams 
    $$dg'_{(n_1|\cdots|n_r)}(\lambda^{(1)}|\cdots|\lambda^{(r)}):= dg'((0^{n_1-\ell(\lambda^{(1)})}*\rev(\lambda^{(1)}))*\cdots * (0^{n_r-\ell(\lambda^{(r)})}*\rev(\lambda^{(r)})))$$
    $$\widehat{dg}_{(n_1|\cdots|n_r)}(\lambda^{(1)}|\cdots|\lambda^{(r)}):= \widehat{dg}((0^{n_1-\ell(\lambda^{(1)})}*\rev(\lambda^{(1)}))*\cdots * (0^{n_r-\ell(\lambda^{(r)})}*\rev(\lambda^{(r)}))).$$
    We will require multiple copies of the usual labels $\{1,2,3,\ldots\}$. We consider labels of the form $i_j$ for $i,j \geq 1$ which we order by $i_j \leq k_{\ell}$ if either $j < \ell$ or $j = \ell$ and $i < k.$ We naturally extend the definition of co-inversion triples to any lattice diagram with labels of the form $i_j$ for $i,j \geq 1$ using the ordering just described. Given any such labeling $\sigma$ we define the monomial 
    $$x^{\sigma} = \prod_{j \geq 1} \prod_{ i \geq 1} x_{j,i}^{|\sigma^{-1}(i_j)|}.$$ We denote by $[n_1|\ldots|n_r]$ the set $\{1_1,\ldots,{(n_1)}_1\} \cup \ldots \cup \{1_r,\ldots,{(n_r)}_r \}.$ Given a labeling $$ \sigma:dg'((0^{n_1-\ell(\lambda^{(1)})}*\rev(\lambda^{(1)}))*\cdots * (0^{n_r-\ell(\lambda^{(r)})}*\rev(\lambda^{(r)}))) \rightarrow [n_1|\ldots|n_r]$$ and define 
    $$\widehat{\sigma}:\widehat{dg}((0^{n_1-\ell(\lambda^{(1)})}*\rev(\lambda^{(1)}))*\cdots * (0^{n_r-\ell(\lambda^{(r)})}*\rev(\lambda^{(r)}))) \rightarrow [n_1|\ldots|n_r]$$ by extending $\sigma$ to the basement boxes via 
    $$\widehat{\sigma}(n_1+\ldots+n_{j-1}+i,0):= i_j.$$ We will denote by $\cL^{(n_1|\cdots|n_r)}(\lambda^{(1)}|\cdots|\lambda^{(r)})$ the set of non-attacking labellings $$ \sigma:dg'((0^{n_1-\ell(\lambda^{(1)})}*\rev(\lambda^{(1)}))*\cdots * (0^{n_r-\ell(\lambda^{(r)})}*\rev(\lambda^{(r)}))) \rightarrow [n_1|\ldots|n_r]$$ such that $\widehat{\sigma}$ is weakly decreasing upwards and has no co-inversion triples. 
\end{defn}

Using Proposition \ref{key poly combinatorial formula}, we find that 
\begin{equation}\label{combinatorial formula eq}
    \cS_{(\lambda^{(1)}|\cdots |\lambda^{(r)})}^{(n_1|\cdots|n_r)} = \sum_{\sigma \in \cL^{(n_1|\cdots|n_r)}(\lambda^{(1)}|\cdots|\lambda^{(r)})} x^{\sigma}.
\end{equation}

\begin{remark}
    The restriction of each labeling $\sigma \in \cL^{(n_1|\cdots|n_r)}(\lambda^{(1)}|\cdots|\lambda^{(r)})$ to one of the diagram components $\rev(\lambda^{(i)})$ is weakly decreasing upwards and strictly increasing rightwards along connected segments of rows.
\end{remark}

\begin{defn}
    Given a labeling $\sigma$ of a lattice diagram, we say that $\sigma$ is inverse semi-standard if $\sigma$ weakly decreases upwards and strictly increasing rightwards along rows. We will write 
    $$\mathbb{N}(r):= \{1_1,2_1,\ldots\}\cup \ldots \cup \{1_r,2_r,\ldots \}.$$ We define $\mathbb{N}(r)_{\infty}$ by extending the poset $\mathbb{N}(r)$ by adding in elements   $\omega_1,\ldots,\omega_r$ and formal sums $\omega_i + j$ for $1\leq i \leq r$ and $j \geq 0$ which we order as follows:
    $$a_1 < \omega_1 + b_1 < c_2 < \omega_2 + d_2 < \ldots < e_r < \omega_r + f_r$$ for any $a,b,c,d,e,f \in \{1,2,3\ldots\}.$ Note the subscripts here denote which alphabet those values belong in. Informally, everything within each extended alphabet is ordered as one would expect and each extended alphabet is mutually less than the next one. Given a labeling $\sigma: dg'(\rev(\lambda^{(1)})*\ldots * \rev(\lambda^{(r)})) \rightarrow \mathbb{N}(r)$ define $\sigma^{\star}:\widehat{dg}(\rev(\lambda^{(1)})*\cdots * \rev(\lambda^{(r)}))$ by extending $\sigma$ and defining $\sigma^{\star}$ on the basement boxes via 
    $$\sigma^{\star}(\ell(\lambda^{(1)})+\ldots + \ell(\lambda^{(i-1)}) + j,0):= \omega_i + j -1 $$ for $1\leq i \leq r$ and $1 \leq j \leq \ell(\lambda^{(i)}).$
    Define $\cL(\lambda^{(1)}|\cdots |\lambda^{(r)})$ to be the set of labellings $\sigma: \rev(\lambda^{(1)})*\ldots * \rev(\lambda^{(r)}) \rightarrow \mathbb{N}(r)$ such that $\sigma^{\star}$ 
    is non-attacking, weakly decreasing upwards, and has no co-inversion triples.
\end{defn}

When considering fillings for multiple partitions, we will signify the distinct components using color-coding. Below are the diagrams corresponding to the partitions $(2,1|2,1,1|4,4,4,3,2,2,2):$
\begin{center} $dg'(2,1|2,1,1|4,4,4,3,2,2,2) = $
 \ytableausetup
 {mathmode, boxframe=normal, boxsize=3em}
\begin{ytableau}
        \none & \none & \none & \none & \none & \none & \none & \none & \none & \none & *(green) &*(green) &*(green) \\
        \none & \none & \none & \none & \none & \none & \none & \none & \none & *(green) & *(green) &*(green) &*(green) \\
        \none & \none & *(red) & \none & \none & *(yellow) & *(green) & *(green) & *(green) & *(green) & *(green) & *(green) & *(green) \\
        \none &   *(red)   & *(red) & *(yellow) & *(yellow) & *(yellow) & *(green) & *(green) & *(green) & *(green) & *(green) & *(green) & *(green)\\ 
\end{ytableau}
\end{center}

\begin{center} $\widehat{dg}(2,1|2,1,1|4,4,4,3,2,2,2) = $
\begin{ytableau}
        \none & \none & \none & \none & \none & \none & \none & \none & \none & *(green) &*(green) &*(green) \\
         \none & \none & \none & \none & \none & \none & \none & \none & *(green) & *(green) &*(green) &*(green) \\
         \none & *(red) & \none & \none & *(yellow) & *(green) & *(green) & *(green) & *(green) & *(green) & *(green) & *(green) \\
            *(red)   & *(red) & *(yellow) & *(yellow) & *(yellow) & *(green) & *(green) & *(green) & *(green) & *(green) & *(green) & *(green)\\ 
       *(red) \omega_1 & *(red) \omega_1 + 1 & *(yellow) \omega_2 & *(yellow) \omega_2 + 1 & *(yellow) \omega_2 +2 & *(green) \omega_3 & *(green) \omega_3 +1 & *(green) \omega_3 + 2 & *(green) \omega_3 + 3 & *(green) \omega_3 + 4 & *(green) \omega_3 + 5 & *(green) \omega_3 +6  \\
\end{ytableau}
\end{center}

\begin{example}
    Here is a filling $\sigma \in \cL(2,1|2,1,1|4,4,4,3,2,2,2)$:

\begin{center}
\begin{ytableau}
          \none & \none & \none & \none & \none & \none & \none & \none & \none & *(green) 1_1 &*(green) 4_1 &*(green) 4_3\\
         \none & \none & \none & \none & \none & \none & \none & \none & *(green) 9_1 & *(green)3_2 &*(green) 4_2&*(green)5_3 \\
         \none & *(red) 10_1 & \none & \none & *(yellow) 5_1 & *(green) 6_1 & *(green) 7_1 & *(green) 8_1 & *(green) 9_1 & *(green) 3_2 & *(green)4_2 & *(green) 5_3\\
           *(red) 9_1 & *(red) 10_1 & *(yellow) 3_1 & *(yellow) 7_1 & *(yellow) 7_2 & *(green) 5_2 & *(green) 8_2 & *(green) 1_3 & *(green) 2_3 & *(green) 3_3 & *(green) 4_3 &  *(green) 5_3\\ 
           *(red) \omega_1 & *(red) \omega_1 + 1 & *(yellow) \omega_2 & *(yellow) \omega_2 + 1 & *(yellow) \omega_2 +2 & *(green) \omega_3 & *(green) \omega_3 +1 & *(green) \omega_3 + 2 & *(green) \omega_3 + 3 & *(green) \omega_3 + 4 & *(green) \omega_3 + 5 & *(green) \omega_3 +6  \\
\end{ytableau}
\end{center}
 This filling has weight 
 $$x_{\sigma} = x_{1,1}x_{1,3}x_{1,4}x_{1,5}x_{1,6}x_{1,7}^2x_{1,8}x_{1,9}^3x_{1,10}^2 x_{2,3}^2x_{2,4}^2 x_{2,5}x_{2,7}x_{2,8}x_{3,1}x_{3,2}x_{3,3}x_{3,4}x_{3,5}^3.$$
    
\end{example}

The following combinatorial formula for the multi-symmetric Schur functions generalizes the well-known combinatorial for the Schur functions.

\begin{thm}\label{combinatorial formula for multi-sym Schur}
    $$\cS_{(\lambda^{(1)}|\cdots|\lambda^{(r)})}(X_1|\cdots|X_r) = \sum_{\sigma \in \cL(\lambda^{(1)}|\cdots|\lambda^{(r)})} x^{\sigma}.$$
\end{thm}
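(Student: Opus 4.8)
The plan is to obtain the formula as a stable limit of the finite-variable formula in Equation \eqref{combinatorial formula eq}, matching the fillings in $\cL^{(n_1|\cdots|n_r)}(\lambda^{(1)}|\cdots|\lambda^{(r)})$ with the fillings in $\cL(\lambda^{(1)}|\cdots|\lambda^{(r)})$ in the limit $n_1,\ldots,n_r\to\infty$. The key idea is that the infinite alphabet $\mathbb{N}(r)_\infty$ with its formal symbols $\omega_i+j$ is precisely engineered to encode, in a $n$-independent way, the constraints imposed by the growing basement. First I would set up the correspondence: given $\sigma\in\cL^{(n_1|\cdots|n_r)}$, the entries of $\sigma$ landing in the ``true'' cells of $\rev(\lambda^{(i)})$ use labels $i_j$, while the large stretch of $n_i-\ell(\lambda^{(i)})$ zero-columns behaves like a symmetrization. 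The relabeling map sends an actual entry $k_i$ to an element of $\mathbb{N}(r)$, and the basement boxes sitting under the nontrivial part of $\rev(\lambda^{(i)})$, whose values depend on $n_1,\ldots,n_{i-1}$ and the padding, get encoded by the formal symbols $\omega_i+j-1$ as in the definition of $\sigma^\star$.

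The heart of the argument is to verify that the three defining conditions — non-attacking, weakly decreasing upwards, no co-inversion triples — transfer correctly under this relabeling, \emph{uniformly in the $n_i$}. I would argue that for $n_i$ sufficiently large relative to the weight being tracked, every entry of $\sigma$ in component $i$ that is not forced into the basement-comparison can be taken to lie strictly above all basement values $\omega_{i'}+j$ with $i'<i$ and strictly below $\omega_{i'}+j$ with $i'>i$; this is exactly the total order $a_1<\omega_1+b_1<c_2<\omega_2+d_2<\cdots$ built into $\mathbb{N}(r)_\infty$. Concretely, the padding columns of height $0$ force, via the non-attacking and no-co-inversion conditions, that any label appearing in component $i$ must dominate the basement labels of components $1,\ldots,i-1$ and be dominated by those of $i+1,\ldots,r$; in the limit these basement labels become the $\omega$'s. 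The main obstacle — and the step I would spend the most care on — is showing that the co-inversion-triple condition stabilizes: a triple $(u,v,w)$ in $\widehat{dg}_{(n_1|\cdots|n_r)}$ may involve a basement box $v$ whose value grows with $n$, and I must check that the clockwise/counterclockwise strict-increase criterion, evaluated with the formal $\omega$-symbols, gives the \emph{same} acceptance/rejection as for every large $n_i$. This requires a careful case analysis on whether $v$ lies in the basement and in which component, using the column-height comparisons (strictly/weakly taller) that distinguish Type 1 from Type 2.

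Having established the bijection $\sigma\leftrightarrow\sigma^\star$ between stable fillings and elements of $\cL(\lambda^{(1)}|\cdots|\lambda^{(r)})$, I would conclude by checking that the monomial weights agree: the weight $x^\sigma$ defined via $x^\sigma=\prod_{j}\prod_i x_{j,i}^{|\sigma^{-1}(i_j)|}$ is manifestly the limit of the finite weights, since the $\omega$-labeled basement boxes contribute no variables. Passing $n_1,\ldots,n_r\to\infty$ in Equation \eqref{combinatorial formula eq} and invoking the stability from Lemma \ref{mutlisym stability lemma} (which guarantees the limit defining $\cS_{(\lambda^{(1)}|\cdots|\lambda^{(r)})}$ in Definition \ref{multi-sym Schur function defn} exists term-by-term) then yields
$$\cS_{(\lambda^{(1)}|\cdots|\lambda^{(r)})}(X_1|\cdots|X_r)=\sum_{\sigma\in\cL(\lambda^{(1)}|\cdots|\lambda^{(r)})}x^\sigma,$$
as desired. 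I expect the bookkeeping to reduce, after the relabeling is fixed, to the observation already recorded in the Remark following Equation \eqref{combinatorial formula eq}: the restriction of any admissible $\sigma$ to a single component $\rev(\lambda^{(i)})$ is inverse semi-standard, so the only genuinely new content is the \emph{interaction} between components, which is governed entirely by the ordered formal basement labels $\omega_1<\omega_2<\cdots<\omega_r$.
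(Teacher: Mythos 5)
Your proposal is correct and follows essentially the same route as the paper: the paper's proof simply applies the stability of Lemma \ref{mutlisym stability lemma} to Equation \eqref{combinatorial formula eq} and identifies $\cL(\lambda^{(1)}|\cdots|\lambda^{(r)})$ with the stable union of the finite sets $\cL^{(n_1|\cdots|n_r)}(\lambda^{(1)}|\cdots|\lambda^{(r)})$ via ``obvious weight preserving bijections.'' What you have written is a careful expansion of exactly that bijection (in particular the verification that the $\omega_i+j$ basement symbols capture the large-$n_i$ behavior of the non-attacking, monotonicity, and co-inversion-triple conditions), which the paper leaves implicit.
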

\begin{proof}
   The result follows by applying the stability from Lemma \ref{mutlisym stability lemma} to \ref{combinatorial formula eq}. In particular, 
    $$\cL(\lambda^{(1)}|\cdots|\lambda^{(r)}) \equiv \bigcup_{\substack{n_1,\ldots,n_r\\ n_i \geq \ell(\lambda^{(i)})} } \cL^{(n_1|\cdots|n_r)}(\lambda^{(1)}|\cdots|\lambda^{(r)})$$ by the way of the obvious weight preserving bijections.
\end{proof}

\begin{remark}
    In the case that $r=1$, $\cL(\lambda)$ is in natural bijection with $\SSYT(\lambda).$ The fillings in $\cL(\lambda)$ are exactly those labellings of $\rev(\lambda)$ which are strictly increasing rightwards and weakly decreasing upwards. We may reflect the diagram $ \rev(\lambda)$ along with the labeling along any positive slope diagonal above the diagram to obtain a filling of $\lambda$ (in English notation) which is weakly decreasing rightwards along rows and strictly decreasing down columns. Now order the entries in the filling $a_1 < \ldots < a_r$ and apply the order-reversing permutation $a_i \rightarrow a_{r-i+1}.$ The resulting filling will be a semi-standard Young tableau of shape $\lambda.$ This map defines a bijection $\cL(\lambda) \leftrightarrow \SSYT(\lambda).$ However, this map is not weight preserving. At first this appears to be an issue since $\cK_{\rev(\lambda)}(x_1,\ldots,x_n) = s_{\lambda}(x_1,\ldots, x_n)$ for all $n$. Let $\cL(\lambda;n)$ and $  \SSYT(\lambda;n)$ denote the set of allowable fillings with labels restricted to the sets $\{1,\ldots, n\}$ for both $\rev(\lambda)$ and $\lambda$ respectively. To alleviate this seeming discrepancy we may define weight preserving bijections $\cL(\lambda;n)\leftrightarrow \SSYT(\lambda;n)$ by repeating the above reflection process and instead applying the longest permutation in $\omega_0^{(n)} \in \mathfrak{S}_n$ i.e. $\omega_0^{(n)}(i):= n-i+1$ to the values. These bijections are not compatible as $n$ increases but, these verify that $\cK_{\rev(\lambda)}(x_1,\ldots,x_n) = s_{\lambda}(x_1,\ldots, x_n)$ diagrammatically for every $n.$
\end{remark}

\subsection{Kostka Coefficients}

We may define multi-symmetric analogues of the usual Kostka numbers. 

\begin{defn}
    We define the \textbf{\textit{multi-symmetric Kostka coefficients}} $K_{(\mu^{(1)}|\cdots|\mu^{(r)})}^{(\lambda^{(1)}|\cdots|\lambda^{(r)})}$ for $(\mu^{(1)}|\cdots|\mu^{(r)}),(\lambda^{(1)}|\cdots|\lambda^{(r)}) \in \Par^r$ by 
    $$\cS_{(\lambda^{(1)}|\cdots|\lambda^{(r)})} = \sum_{(\mu^{(1)}|\cdots|\mu^{(r)}) \in \Par^r} K_{(\mu^{(1)}|\cdots|\mu^{(r)})}^{(\lambda^{(1)}|\cdots|\lambda^{(r)})} m_{(\mu^{(1)}|\cdots|\mu^{(r)})}.$$
\end{defn}

\begin{defn}
    Given $(\mu^{(1)}|\cdots|\mu^{(r)}),(\lambda^{(1)}|\cdots|\lambda^{(r)}) \in \Par^r$ define $\cL_{(\mu^{(1)}|\cdots|\mu^{(r)})}(\lambda^{(1)}|\cdots|\lambda^{(r)})$ to be the set of labellings $\sigma \in \cL(\lambda^{(1)}|\cdots|\lambda^{(r)})$ such that for all $1\leq j \leq r$ and $1 \leq i \leq \ell(\mu^{(j)})$ 
    $$|\sigma^{-1}(i_j)| = \mu^{(j)}_i.$$
\end{defn}

We may alternatively describe $\cL_{(\mu^{(1)}|\cdots|\mu^{(r)})}(\lambda^{(1)}|\cdots|\lambda^{(r)})$ as the set of all allowed fillings with \textbf{\textit{shape}} $(\lambda^{(1)}|\cdots|\lambda^{(r)})$ and \textbf{\textit{content}} $(\mu^{(1)}|\cdots|\mu^{(r)}).$ We readily find the following combinatorial interpretation for the multi-symmetric Kostka numbers.

\begin{thm}\label{Kostka coeff thm}
    $$K_{(\mu^{(1)}|\cdots|\mu^{(r)})}^{(\lambda^{(1)}|\cdots|\lambda^{(r)})} = |\cL_{(\mu^{(1)}|\cdots|\mu^{(r)})}(\lambda^{(1)}|\cdots|\lambda^{(r)})|$$
\end{thm}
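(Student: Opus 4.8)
The plan is to extract the coefficient of a single distinguished monomial from the combinatorial expansion in Theorem \ref{combinatorial formula for multi-sym Schur} and to identify that coefficient with $K^{(\lambda^{(1)}|\cdots|\lambda^{(r)})}_{(\mu^{(1)}|\cdots|\mu^{(r)})}$ using the multi-symmetry of $\cS$. First I would start from Theorem \ref{combinatorial formula for multi-sym Schur},
$$\cS_{(\lambda^{(1)}|\cdots|\lambda^{(r)})} = \sum_{\sigma \in \cL(\lambda^{(1)}|\cdots|\lambda^{(r)})} x^{\sigma},$$
and organize the sum according to the content of each filling, namely the exponent array $(|\sigma^{-1}(i_j)|)_{i,j}$ recorded in $x^{\sigma} = \prod_{j}\prod_i x_{j,i}^{|\sigma^{-1}(i_j)|}$. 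By definition $\cL_{(\mu^{(1)}|\cdots|\mu^{(r)})}(\lambda^{(1)}|\cdots|\lambda^{(r)})$ is exactly the block of those $\sigma$ whose content is the sorted tuple $(\mu^{(1)}|\cdots|\mu^{(r)})$, so the contribution of this block is $|\cL_{(\mu^{(1)}|\cdots|\mu^{(r)})}(\lambda^{(1)}|\cdots|\lambda^{(r)})|$ copies of the single sorted monomial $x^{(\mu)} := \prod_{j=1}^r \prod_{i=1}^{\ell(\mu^{(j)})} x_{j,i}^{\mu^{(j)}_i}$.

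Next I would invoke that $\cS_{(\lambda^{(1)}|\cdots|\lambda^{(r)})}$ is a genuine element of $\Lambda(X_1|\cdots|X_r)$ (Definition \ref{multi-sym Schur function defn}), hence expands uniquely in the basis $\{m_{(\mu^{(1)}|\cdots|\mu^{(r)})}\}$. Since each $m_{\mu^{(j)}}(X_j)$ is the multiplicity-free sum over the distinct rearrangements of the exponent tuple $\mu^{(j)}$, the product $m_{(\mu^{(1)}|\cdots|\mu^{(r)})}$ contains $x^{(\mu)}$ with coefficient exactly $1$, while $x^{(\mu)}$ occurs in no $m_{(\nu^{(1)}|\cdots|\nu^{(r)})}$ for $\nu \neq \mu$ because the sorted exponent array recovers the partition tuple uniquely. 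Therefore the coefficient of $x^{(\mu)}$ in $\cS_{(\lambda^{(1)}|\cdots|\lambda^{(r)})}$ equals $K^{(\lambda^{(1)}|\cdots|\lambda^{(r)})}_{(\mu^{(1)}|\cdots|\mu^{(r)})}$ when read through the monomial expansion, and equals $|\cL_{(\mu^{(1)}|\cdots|\mu^{(r)})}(\lambda^{(1)}|\cdots|\lambda^{(r)})|$ when read through the combinatorial formula, which is the claim.

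The point requiring genuine care is the justification that extracting a single monomial suffices: one must confirm that $\cS_{(\lambda^{(1)}|\cdots|\lambda^{(r)})}$ is truly symmetric within each variable set, so that every monomial in the $\mathfrak{S}_{n_1}\times\cdots\times\mathfrak{S}_{n_r}$-orbit of $x^{(\mu)}$ carries the same coefficient, and that $x^{(\mu)}$ is the unique partition-valued representative of its orbit. The first is inherited from the separate symmetry established in Definition \ref{multi-symmetric Schur poly defn} and preserved in the limit defining $\cS$; the second is immediate since each $\mu^{(j)}$ is weakly decreasing. The only remaining bookkeeping is to match the defining condition of $\cL_{(\mu^{(1)}|\cdots|\mu^{(r)})}(\lambda^{(1)}|\cdots|\lambda^{(r)})$ to the content $(\mu^{(1)}|\cdots|\mu^{(r)})$, noting in particular that the content constraint forces every label $i_j$ with $i > \ell(\mu^{(j)})$ to be absent, so that the block singled out above is precisely $\cL_{(\mu^{(1)}|\cdots|\mu^{(r)})}(\lambda^{(1)}|\cdots|\lambda^{(r)})$.
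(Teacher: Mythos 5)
Your proposal is correct and follows essentially the same route as the paper: the paper's proof simply cites Theorem \ref{combinatorial formula for multi-sym Schur} and "applies the symmetries in each variable set," which is exactly the coefficient-extraction argument you spell out in detail. Your added care about why extracting the single sorted monomial $x^{(\mu)}$ suffices is the right point to make explicit, but it is the same underlying argument.
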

\begin{proof}
    This follows immediately from Theorem \ref{combinatorial formula for multi-sym Schur} by applying the symmetries in each variable set $X_1,\ldots, X_r.$ 
\end{proof}

\begin{example}
    We saw previously in Example \ref{2-multisym example of key poly} that
    $$\cS_{(2|1)} = s_{(2|1)} + s_{(2,1|\emptyset)}.$$ By expanding into the monomial basis we find 
    $$\cS_{(2|1)} = m_{(2|1)}+ m_{(1,1|1)}+ m_{(2,1|\emptyset)} + 2m_{(1,1,1|\emptyset)}.$$ These coefficients may be computed diagrammatically using Theorem \ref{Kostka coeff thm} as follows:
    \begin{multicols}{2}
    \begin{itemize}
        \item 
    $(2|1) \rightarrow $ ~~~~  \ytableausetup
 {mathmode, boxframe=normal, boxsize=2em}\begin{ytableau}
       *(red) 1_1  &  \none \\
        *(red) 1_1 & *(yellow) 1_2  \\
        *(red) \omega_1 & *(yellow) \omega_2 \\
        \end{ytableau}
        
        \item 
    $(1,1|1) \rightarrow $ ~~~~ \begin{ytableau}
       *(red) 1_1  &  \none \\
        *(red) 2_1 & *(yellow) 1_2  \\
         *(red) \omega_1 & *(yellow) \omega_2 \\
        \end{ytableau}
        
         \item 
    $(2,1|\emptyset) \rightarrow $ ~~~~ \begin{ytableau}
       *(red) 1_1  &  \none \\
        *(red) 1_1 & *(yellow) 2_1  \\
         *(red) \omega_1 & *(yellow) \omega_2 \\
        \end{ytableau}
        
         \item 
    $(1,1,1|\emptyset) \rightarrow $ ~~~~ \begin{ytableau}
       *(red) 1_1  &  \none \\
        *(red) 2_1 & *(yellow) 3_1 \\
         *(red) \omega_1 & *(yellow) \omega_2 \\
        \end{ytableau}~~,~~ \begin{ytableau}
       *(red) 2_1  &  \none \\
        *(red) 3_1 & *(yellow) 1_1  \\
         *(red) \omega_1 & *(yellow) \omega_2 \\
        \end{ytableau}
        \end{itemize}
        \end{multicols}
Notice that the diagram filling 
\begin{ytableau}
       *(red) 1_1  &  \none \\
        *(red) 3_1 & *(yellow) 2_1 \\
         *(red) \omega_1 & *(yellow) \omega_2 \\
        \end{ytableau}
is \textbf{not} in $\cL_{(1,1,1|\emptyset)}(2|1)$ since the entries $1_1,2_1,3_1$ increase clockwise and therefore form a Type 1 co-inversion triple. Furthermore, we may confirm that $K_{(1|2)}^{(2|1)} = 0$ by noticing that any theoretical labeling in $\cL_{(1|2)}(2|1)$ must have at least one of the labels $1_2,2_2$ in the first column which is impossible since $\omega_1< 1_2,2_2.$
\end{example}

Using Theorem \ref{Kostka coeff thm}, we may explicitly compute the \textit{leading} Kostka coefficient of each multi-symmetric Schur function. 

\begin{cor}\label{leading coefficient is 1}
    $$K_{(\lambda^{(1)}|\cdots|\lambda^{(r)})}^{(\lambda^{(1)}|\cdots|\lambda^{(r)})} = 1$$
\end{cor}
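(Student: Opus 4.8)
The plan is to prove that $K_{(\lambda^{(1)}|\cdots|\lambda^{(r)})}^{(\lambda^{(1)}|\cdots|\lambda^{(r)})} = 1$ by showing that there is exactly one filling $\sigma \in \cL_{(\lambda^{(1)}|\cdots|\lambda^{(r)})}(\lambda^{(1)}|\cdots|\lambda^{(r)})$, i.e.\ exactly one allowed filling whose content equals its shape. By Theorem \ref{Kostka coeff thm} this reduces the problem entirely to a combinatorial uniqueness statement. The content condition $|\sigma^{-1}(i_j)| = \lambda^{(j)}_i$ means that within alphabet $j$ the label $i_j$ must appear exactly $\lambda^{(j)}_i$ times. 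I would first identify the natural candidate filling: in the component $\rev(\lambda^{(i)})$, fill the entire connected row that has length $\lambda^{(i)}_k$ with copies of the appropriate label, so that each row of the reversed diagram is constant and the column strictly decreases upward using the alphabet-$i$ labels in their natural order. I would then verify this candidate lies in $\cL_{(\lambda^{(1)}|\cdots|\lambda^{(r)})}(\lambda^{(1)}|\cdots|\lambda^{(r)})$: it is weakly decreasing upward, strictly increasing rightward along connected row segments, non-attacking, and free of co-inversion triples.

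The core of the argument is \emph{uniqueness}. First I would argue that no label from a later alphabet can appear in an earlier component: because each basement box of component $i$ carries a label of the form $\omega_i + (\text{something})$, and the ordering places the entire alphabet-$i$ block strictly below all of alphabet $i+1, \ldots, r$, the weakly-decreasing-upward condition forces every entry in component $i$ to use only labels $\leq$ the alphabet-$i$ basement labels; combined with the content constraint this confines alphabet-$j$ labels to component $j$. This localizes the problem to each component separately. Within a single component $\rev(\lambda^{(i)})$, the constraints degenerate to the classical situation: a filling that is weakly decreasing upward, strictly increasing rightward along rows, with content equal to its own shape. The plan is to show by a row-by-row induction from the bottom that the only such filling is the row-constant one. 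The bottom (longest) row must be filled with the smallest label in the required multiset to avoid violating either the content count or the no-co-inversion-triple condition with the basement; once the bottom row is forced, the strict-increase-along-rows and the content multiplicities force each successive higher row.

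The step I expect to be the main obstacle is making the uniqueness-within-a-component argument airtight against the co-inversion triple condition, rather than merely against the simpler row/column monotonicity. In the classical $r=1$ case this is the standard fact that the diagonal Kostka number $K_{\lambda\lambda} = 1$, whose one-line proof is that the unique semistandard tableau of shape $\lambda$ and content $\lambda$ places $i$'s exactly in row $i$; here the reflection bijection $\cL(\lambda) \leftrightarrow \SSYT(\lambda)$ described in the preceding remark transports that uniqueness to our setting. So a clean route is to invoke that bijection componentwise: once labels are confined to their own alphabet, the content-$\lambda^{(i)}$ fillings of $\rev(\lambda^{(i)})$ biject with content-$\lambda^{(i)}$ semistandard tableaux of shape $\lambda^{(i)}$, of which there is exactly one. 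I would therefore structure the proof as: (1) use the ordering of $\mathbb{N}(r)_\infty$ and the weakly-decreasing-upward / basement constraints to confine each alphabet to its component and decouple the product; (2) apply the classical diagonal-Kostka fact $K_{\lambda^{(i)}}^{\lambda^{(i)}} = 1$ componentwise via the remark's bijection; (3) conclude the total count is the product $\prod_i 1 = 1$. The delicate point to check carefully in step (1) is that the confinement is genuinely forced and not merely permitted, since a stray large label could in principle satisfy monotonicity while still respecting some content vector --- this is ruled out precisely because the content vector here is the shape itself, leaving no room for labels outside the prescribed multiset.
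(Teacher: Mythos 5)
Your proposal is correct and follows essentially the same route as the paper's proof: reduce to counting fillings via Theorem \ref{Kostka coeff thm}, use the basement labels together with the weakly-decreasing-upward condition and the content constraint to confine each alphabet to its own component, and then invoke the classical fact $K_{\nu,\nu}=1$ (via inverse semi-standardness) componentwise. The only cosmetic difference is that the paper phrases the confinement as a pigeonhole argument starting from the first alphabet and iterating, while you argue it by bounding which alphabets may appear in each component; these are equivalent.
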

\begin{proof}
    From Theorem \ref{Kostka coeff thm}, we know that $K_{(\lambda^{(1)}|\cdots|\lambda^{(r)})}^{(\lambda^{(1)}|\cdots|\lambda^{(r)})} = |\cL_{(\lambda^{(1)}|\cdots|\lambda^{(r)})}(\lambda^{(1)}|\cdots|\lambda^{(r)})|$ so it suffices to show that $\cL_{(\lambda^{(1)}|\cdots|\lambda^{(r)})}(\lambda^{(1)}|\cdots|\lambda^{(r)})$ contains exactly one filling. First, the labels from the first alphabet $\{1_1,2_1,\ldots\}$ must all lay in the $\rev(\lambda^{(1)})$-component of the diagram. Otherwise, if any of the labels $\{1_1,2_1,\ldots\}$ lay in the other components of the diagram then some label $i_j$ for $j \geq 2$ must be in the $\rev(\lambda^{(1)})$-component which contradicts the weakly-decreasing requirement. There is exactly one way to fill the shape $\rev(\lambda^{(1)})$ with content $\lambda^{(1)}$ since the labeling must be inverse semi-standard and for the usual Kostka numbers we know that $K_{\nu,\nu} =1$ for all partitions $\nu$. Now that we have accounted for the labels $\{1_1,2_1,\ldots\}$ along with the sub-diagram $\rev(\lambda^{(1)})$, we see that the remaining labels follow the same pattern. In the end, for all $1\leq i \leq r$ the labels $\{1_i,2_i,\ldots\}$ must fill the $\rev(\lambda^{(i)})$-component in the unique way with content $\lambda^{(i)}.$ Since there is only one way to do this, we see that $\cL_{(\lambda^{(1)}|\cdots|\lambda^{(r)})}(\lambda^{(1)}|\cdots|\lambda^{(r)})$ contains only one filling.
\end{proof}

\begin{example}
    The unique element of $\cL_{(2,1|2,1,1|4,4,4,3,2,2,2)}(2,1|2,1,1|4,4,4,3,2,2,2)$ is the following:
\begin{center}
\ytableausetup
 {mathmode, boxframe=normal, boxsize=3em}
\begin{ytableau}
          \none & \none & \none & \none & \none & \none & \none & \none & \none & *(green) 1_3 &*(green) 2_3 &*(green) 3_3\\
         \none & \none & \none & \none & \none & \none & \none & \none & *(green) 1_3 & *(green)2_3 &*(green) 3_3 &*(green)4_3 \\
         \none & *(red) 1_1 & \none & \none & *(yellow) 1_2 & *(green) 1_3 & *(green) 2_3 & *(green) 3_3 & *(green) 4_3 & *(green) 5_3 & *(green)6_3 & *(green) 7_3\\
           *(red) 1_1 & *(red) 2_1 & *(yellow) 1_2 & *(yellow) 2_2 & *(yellow) 3_2 & *(green) 1_3 & *(green) 2_3 & *(green) 3_3 & *(green) 4_3 & *(green) 5_3 & *(green) 6_3 &  *(green) 7_3\\ 
           *(red) \omega_1 & *(red) \omega_1 + 1 & *(yellow) \omega_2 & *(yellow) \omega_2 + 1 & *(yellow) \omega_2 +2 & *(green) \omega_3 & *(green) \omega_3 +1 & *(green) \omega_3 + 2 & *(green) \omega_3 + 3 & *(green) \omega_3 + 4 & *(green) \omega_3 + 5 & *(green) \omega_3 +6  \\
\end{ytableau}
\end{center}
    
\end{example}

\subsection{Triangularity}

In this section, we utilize partial orderings on multi-partitions in order to give triangular expansions for the multi-symmetric Schur functions. We start by using a crude, but simple to use, lexicographic order to give a triangular expansion for the multi-symmetric Schur functions (Theorem \ref{crude triangularity thm}). Using, Cor. \ref{leading coefficient is 1} we conclude that the multi-symmetric Schur functions are a $\mathbb{Z}$-basis for the ring of multi-symmetric functions over $\mathbb{Z}.$ Afterwards, we use a more subtle result of Fan-Guo-Peng-Sun (Theorem \ref{monomials in key polys thm}) to give a complete description of the monomial multi-symmetric supports of the multi-symmetric Schur functions (Theorem \ref{complete support of multi-sym Schur}). 

\begin{defn}
    For $\alpha,\beta \in \mathbb{Z}^n_{\geq 0}$ define the reverse lexicographic ordering $<_{RL}$ by $\beta <_{RL} \alpha$ if there exists some $1\leq k \leq n$ such that $\beta_i = \alpha_i$ if $i > k$ and $\beta_k < \alpha_{k}.$ For all $n_1,\ldots,n_r \geq 1$ and $(\alpha^{(1)}|\ldots | \alpha^{(r)}),(\beta^{(1)}|\ldots | \beta^{(r)}) \in \mathbb{Z}_{\geq 0}^{n_1}\times \cdots \times \mathbb{Z}_{\geq 0}^{n_r}$ we write $(\beta^{(1)}|\ldots | \beta^{(r)}) \leq_{RL} (\alpha^{(1)}|\ldots | \alpha^{(r)})$ if $\beta^{(1)}*\cdots * \beta^{(r)} \leq_{RL} \alpha^{(1)}*\cdots * \alpha^{(r)} $ or equivalently there exists some $1 \leq k \leq r$ such that for all $i > k,$ $ \alpha^{(i)} = \beta^{(i)}$ and $\alpha^{(k)} \leq_{RL} \beta^{(k)}.$
\end{defn}

The following triangularity result is due to Reiner-Shimozono:

\begin{prop}\cite{RS95}
For all $\alpha \in \mathbb{Z}_{\geq 0}^n$, there exist non-negative integers $c_{\alpha,\beta}$ such that 
    $$\cK_{\alpha} = x^{\alpha} + \sum_{\beta <_{RL} \alpha} c_{\alpha,\beta} x^{\beta}.$$
\end{prop}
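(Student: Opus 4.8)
The plan is to read everything off the combinatorial formula for key polynomials (Proposition \ref{key poly combinatorial formula}), $\cK_\alpha = \sum_{\sigma \in \mathcal{L}'(\alpha)} x^\sigma$. Since this already exhibits $\cK_\alpha$ as a sum of monomials, the coefficients $c_{\alpha,\beta} = |\{\sigma \in \mathcal{L}'(\alpha) : x^\sigma = x^\beta\}|$ are automatically non-negative integers, so the entire content of the statement is the triangularity: every occurring weight $\mathrm{wt}(\sigma)$ satisfies $\mathrm{wt}(\sigma) \leq_{RL} \alpha$, and the top weight $\alpha$ is attained exactly once. I would isolate this as (i) a suffix-sum domination bound valid for every filling, and (ii) an equality analysis pinning down the unique maximal filling.

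For (i), the one structural input I need is that in any $\sigma \in \mathcal{L}'(\alpha)$ the restriction of $\widehat\sigma$ to each column $j$ is monotone with basement entry $j$; as the worked examples make clear, this forces every label appearing in column $j$ to be at most $j$. Consequently, for each threshold $m$, every box whose label is $\geq m$ must sit in a column of index $\geq m$. Counting boxes then gives, for all $m$,
\[
\sum_{k \geq m} \mathrm{wt}(\sigma)_k \;\leq\; \sum_{j \geq m} \alpha_j,
\]
i.e. the weight of $\sigma$ is dominated by $\alpha$ in all suffix sums (the remaining conditions defining $\mathcal{L}'(\alpha)$ are not needed here, as they only shrink the index set). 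A short argument converts suffix-sum domination into $\leq_{RL}$: if $\mathrm{wt}(\sigma) \neq \alpha$ and $k$ is the largest index at which they differ, the suffix-sum inequality at $k$ together with agreement above $k$ forces $\mathrm{wt}(\sigma)_k < \alpha_k$, which is exactly $\mathrm{wt}(\sigma) <_{RL} \alpha$.

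For (ii), I would exhibit the diagonal filling $\sigma_0$ defined by labeling every box of column $j$ with the value $j$. It has weight $x^\alpha$, and one checks directly that $\sigma_0 \in \mathcal{L}'(\alpha)$: it is weakly monotone along each column, it is non-attacking because attacking boxes lie in distinct columns and hence receive distinct labels, and it admits no co-inversion triple since the two boxes $u$ and $d(u)$ of any candidate triple share their column label and so cannot take part in a strictly cyclic comparison. For uniqueness, if $\mathrm{wt}(\sigma) = \alpha$ then all the suffix-sum inequalities of (i) are equalities; tightness at $m = j$ forces every box of every column of index $\geq j$ to carry a label $\geq j$, which combined with the column bound (label $\leq j$) forces each box of column $j$ to be labeled exactly $j$, so $\sigma = \sigma_0$. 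Hence $c_{\alpha,\alpha} = 1$ and the expansion has the claimed form.

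The genuinely delicate point is the equality case in (ii): suffix-sum domination itself is immediate, but turning global tightness into the pointwise statement that column $j$ is labeled by $j$ relies on processing the thresholds $m = n, n-1, \ldots$ in order and using that the box-sets $\{\text{label} \geq m\}$ and $\{\text{in columns} \geq m\}$ must coincide once their cardinalities agree. A purely recursive proof via the operators $\xi_i$ is tempting, but I expect it to be the harder route: applying $\xi_i$ to a lower-order monomial $x^\gamma$ with $\gamma_i$ large can a priori create weights with large $(i+1)$-coordinate, and ruling these out seems to require precisely the column/support information that the combinatorial formula supplies for free.
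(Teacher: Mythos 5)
Your proposal is correct, but it cannot be compared line-by-line with the paper, because the paper offers no proof of this proposition at all: it is imported verbatim from Reiner--Shimozono \cite{RS95} as a known fact. What you have written is therefore a self-contained derivation from Proposition \ref{key poly combinatorial formula}, and it holds up. The suffix-sum bound is the right invariant: since each column of $\widehat{\sigma}$ reads weakly decreasingly upward from its basement entry $j$, every label in column $j$ is at most $j$, so the boxes with label $\geq m$ inject into the boxes in columns $\geq m$, giving $\sum_{k\geq m}\mathrm{wt}(\sigma)_k \leq \sum_{j\geq m}\alpha_j$; combined with equal total degree this is exactly $\leq_{RL}$ as the paper defines it. Your equality analysis is also sound: tightness of the containment at every threshold forces each box of column $j$ to carry label exactly $j$, and the diagonal filling is easily checked to be non-attacking and co-inversion-free (the two same-column boxes of any candidate triple carry equal labels, which kills the strict cyclic condition), so $c_{\alpha,\alpha}=1$. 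One point worth flagging: you correctly inferred the convention \emph{weakly decreasing upwards} from the worked examples and from the later definition of $\cL^{(n_1|\cdots|n_r)}$, even though the displayed definition of $\mathcal{L}'(\alpha)$ says ``weakly increasing upwards''; that is a typo in the paper, and your reading is the one consistent with $\cK_{(1,0,2)}$ containing $x_1^2x_2$. Your closing remark is also apt --- a direct induction on the $\xi_i$ recursion is possible (and is closer in spirit to how such triangularity statements are often proved) but requires controlling how $\xi_i$ redistributes exponents, which your column-support argument gets for free.
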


Applying the above triangularity result to the multi-symmetric Schur polynomials we see:

\begin{cor}
For all $(\lambda^{(1)}|\cdots | \lambda^{(r)}) \in \Par^r$ and $(n_1|\ldots|n_r)$ with $n_i \geq \ell(\lambda^{(i)})$ for all $1 \leq i \leq r$,

    $$\cS_{(\lambda^{(1)}|\cdots | \lambda^{(r)})}^{(n_1|\cdots | n_r)}(X_1^{(n_1)}|\cdots | X_r^{(n_r)}) = \sum_{ } c_{(\mu^{(1)}|\cdots | \mu^{(r)}),(\lambda^{(1)}|\cdots | \lambda^{(r)})}^{(n_1|\ldots |n_r)} m_{(\mu^{(1)}|\cdots | \mu^{(r)})}(X_1^{(n_1)}|\cdots | X_r^{(n_r)})$$
    where the sum ranges over all $(\mu^{(1)}|\cdots | \mu^{(r)}) \in \Par^r$ with $$(0^{n_1-\ell(\mu^{(1)})}*\rev(\mu^{(1)})|\cdots | 0^{n_1-\ell(\mu^{(r)})}*\rev(\mu^{(r)})) \leq_{RL} (0^{n_1-\ell(\lambda^{(1)})}*\rev(\lambda^{(1)})|\cdots | 0^{n_1-\ell(\lambda^{(r)})}*\rev(\lambda^{(r)}))$$ and $c_{(\mu^{(1)}|\cdots | \mu^{(r)}),(\lambda^{(1)}|\cdots | \lambda^{(r)})}^{(n_1|\ldots |n_r)} \geq 0$.
\end{cor}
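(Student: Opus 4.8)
The plan is to observe that $\cS_{(\lambda^{(1)}|\cdots|\lambda^{(r)})}^{(n_1|\cdots|n_r)}$ is, by Definition \ref{multi-symmetric Schur poly defn}, literally a single key polynomial $\cK_{\alpha}$ with
$$\alpha := (0^{n_1-\ell(\lambda^{(1)})}*\rev(\lambda^{(1)}))*\cdots*(0^{n_r-\ell(\lambda^{(r)})}*\rev(\lambda^{(r)})),$$
and then to feed this into the Reiner--Shimozono triangularity result stated just above. That result immediately yields
$$\cS_{(\lambda^{(1)}|\cdots|\lambda^{(r)})}^{(n_1|\cdots|n_r)} = \cK_\alpha = x^\alpha + \sum_{\beta <_{RL} \alpha} c_{\alpha,\beta}\, x^\beta,$$
with every $c_{\alpha,\beta}\geq 0$. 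The remaining work is purely organizational: repackaging the monomials $x^\beta$ into the multi-symmetric monomial basis and translating the reverse-lexicographic bound into the stated range for $(\mu^{(1)}|\cdots|\mu^{(r)})$.

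Next I would invoke the symmetry. As noted after Definition \ref{multi-symmetric Schur poly defn}, $\alpha$ is weakly increasing along each window, so $\cK_\alpha$ is invariant under $\mathfrak{S}_{n_1}\times\cdots\times\mathfrak{S}_{n_r}$ permuting variables within each block $X_i^{(n_i)}$. Hence every monomial $x^\beta$ of $\cK_\alpha$ occurs together with its entire window-orbit, and all monomials in a common orbit carry one and the same coefficient. Collecting the orbit of an exponent vector whose $i$-th window has parts forming the partition $\mu^{(i)}$ assembles precisely into $m_{(\mu^{(1)}|\cdots|\mu^{(r)})}(X_1^{(n_1)}|\cdots|X_r^{(n_r)})$, since each window's distinct rearrangements are exactly the monomials of $m_{\mu^{(i)}}(X_i^{(n_i)})$. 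Therefore the coefficient $c_{(\mu^{(1)}|\cdots|\mu^{(r)}),(\lambda^{(1)}|\cdots|\lambda^{(r)})}^{(n_1|\ldots|n_r)}$ equals the Reiner--Shimozono coefficient $c_{\alpha,\beta}$ of any single representative $\beta$ of that orbit, which is $\geq 0$. This gives both the monomial expansion and the nonnegativity assertion at once; the summand $x^\alpha$ is the orbit of $(\lambda^{(1)}|\cdots|\lambda^{(r)})$ itself, consistent with the leading coefficient being $1$.

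Finally I would pin down the range of summation. The essential ingredient is a small order-theoretic lemma: among all rearrangements of a fixed multiset of nonnegative integers placed in a window of length $n$, the unique $\leq_{RL}$-maximal arrangement is the weakly increasing one, namely $0^{n-\ell(\mu)}*\rev(\mu)$ where $\mu$ records the multiset. This holds because $<_{RL}$ reads from the right, forcing the largest part to sit rightmost, the next-largest next-to-rightmost, and so on. Since the windows are contiguous disjoint blocks and $\leq_{RL}$ is resolved block-by-block from the right, the $\leq_{RL}$-maximal representative of a full window-orbit is the concatenation
$$\beta_{\max} := (0^{n_1-\ell(\mu^{(1)})}*\rev(\mu^{(1)}))*\cdots*(0^{n_r-\ell(\mu^{(r)})}*\rev(\mu^{(r)})).$$
Now if $m_{(\mu^{(1)}|\cdots|\mu^{(r)})}$ occurs in $\cS_{(\lambda^{(1)}|\cdots|\lambda^{(r)})}^{(n_1|\cdots|n_r)}$, then by the window symmetry $x^{\beta_{\max}}$ occurs in $\cK_\alpha$, so the Reiner--Shimozono bound forces $\beta_{\max}\leq_{RL}\alpha$, which is exactly the stated condition. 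The only genuinely nontrivial step is this maximal-representative lemma together with its compatibility with the block structure of $\leq_{RL}$; everything else is a direct transcription of the Reiner--Shimozono proposition through the identification of $\cS^{(n_1|\cdots|n_r)}$ with a key polynomial.
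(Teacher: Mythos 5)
Your proposal is correct and matches the paper's intent: the paper states this corollary as an immediate application of the Reiner--Shimozono triangularity to the key polynomial $\cK_\alpha$ defining $\cS^{(n_1|\cdots|n_r)}_{(\lambda^{(1)}|\cdots|\lambda^{(r)})}$, using the window symmetry to collect monomials into the $m_{(\mu^{(1)}|\cdots|\mu^{(r)})}$. Your explicit verification that the $\leq_{RL}$-maximal element of a window-orbit is the concatenation of the weakly increasing arrangements is exactly the detail the paper leaves implicit, and it is argued correctly.
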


\begin{remark}
From the stability in Lemma \ref{mutlisym stability lemma} we see that
    $$\lim_{n_1,\ldots,n_r \rightarrow \infty} c_{(\mu^{(1)}|\cdots | \mu^{(r)}),(\lambda^{(1)}|\cdots | \lambda^{(r)})}^{(n_1|\ldots |n_r)} = K_{(\mu^{(1)}|\cdots | \mu^{(r)})}^{(\lambda^{(1)}|\cdots | \lambda^{(r)})}.$$
\end{remark}

\begin{defn}
For $\mu,\lambda \in \Par$ we define the lexicographic ordering $<_L$ via $\mu <_L \lambda$ if 
there exists some $k \geq 1$ such that for all $i < k$, $\mu_i = \lambda_i$ and $\mu_k < \lambda_k$ where we adjoin $0's$ to the end of partitions when necessary. For $(\mu^{(1)}|\ldots | \mu^{(r)}),(\lambda^{(1)}|\ldots | \lambda^{(r)}) \in \Par^r$ we define the ordering $\prec$ via $(\mu^{(1)}|\ldots | \mu^{(r)}) \prec (\lambda^{(1)}|\ldots | \lambda^{(r)})$ if there exists some $1\leq k \leq r$ such that for all $i > k$, $\mu^{(i)} = \lambda^{(i)}$ and $\mu^{(k)} <_L \lambda^{(k)}.$
\end{defn}

The multi-symmetric Schur functions are triangular in the monomial multi-symmetric functions with respect to the order $\prec$.

\begin{thm}\label{crude triangularity thm}
For all $(\lambda^{(1)}|\cdots | \lambda^{(r)}) \in \Par^r$ we have the expansion
$$\cS_{(\lambda^{(1)}|\cdots | \lambda^{(r)})} = m_{(\lambda^{(1)}|\cdots | \lambda^{(r)})} +\sum_{(\mu^{(1)}|\cdots | \mu^{(r)}) \prec (\lambda^{(1)}|\cdots | \lambda^{(r)}) } K_{(\mu^{(1)}|\cdots | \mu^{(r)})}^{(\lambda^{(1)}|\cdots | \lambda^{(r)})} m_{(\mu^{(1)}|\cdots | \mu^{(r)})}.$$
\end{thm}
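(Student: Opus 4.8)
The plan is to derive the statement from two facts already in hand: the finite-variable triangular expansion of $\cS^{(n_1|\cdots|n_r)}_{(\lambda^{(1)}|\cdots|\lambda^{(r)})}$ recorded in the Corollary above (coming from the Reiner--Shimozono triangularity of key polynomials), together with the identification $\lim_{n_1,\ldots,n_r \to \infty} c^{(n_1|\ldots|n_r)}_{(\mu),(\lambda)} = K^{(\lambda^{(1)}|\cdots|\lambda^{(r)})}_{(\mu^{(1)}|\cdots|\mu^{(r)})}$ from the Remark. For each admissible $(n_1|\cdots|n_r)$ the finite coefficient is non-negative and vanishes unless the canonical composition of $(\mu)$, namely $0^{n_1-\ell(\mu^{(1)})}*\rev(\mu^{(1)})*\cdots*0^{n_r-\ell(\mu^{(r)})}*\rev(\mu^{(r)})$, is $\leq_{RL}$ the analogous composition for $(\lambda)$. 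Passing to the limit, the coefficient of $m_{(\mu)}$ in $\cS_{(\lambda)}$ is $K^{(\lambda)}_{(\mu)} \geq 0$ and is supported on the same set of $(\mu)$. Hence the whole theorem reduces to (i) the order-theoretic claim that this $\leq_{RL}$ condition on canonical compositions is equivalent to $(\mu) \preceq (\lambda)$, together with (ii) Corollary \ref{leading coefficient is 1}, which gives $K^{(\lambda)}_{(\lambda)} = 1$ for the leading term.

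The crux is the equivalence
\[
0^{n_1-\ell(\mu^{(1)})}*\rev(\mu^{(1)})*\cdots \ \leq_{RL}\ 0^{n_1-\ell(\lambda^{(1)})}*\rev(\lambda^{(1)})*\cdots \iff (\mu^{(1)}|\cdots|\mu^{(r)}) \preceq (\lambda^{(1)}|\cdots|\lambda^{(r)}),
\]
where $\preceq$ means $\prec$ or equality. To prove it I would read each concatenated composition from the right, which is the direction governing $\leq_{RL}$. Reading the rightmost block $0^{n_r-\ell(\lambda^{(r)})}*\rev(\lambda^{(r)})$ from right to left recovers the parts of $\lambda^{(r)}$ in weakly decreasing order followed by zeros, and likewise for $\mu^{(r)}$; so a positionwise comparison of block $r$ from the right is exactly the lexicographic comparison $<_L$ of $\lambda^{(r)}$ and $\mu^{(r)}$ under the standard zero-padding. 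Since $\leq_{RL}$ scans the full concatenation from the right, it processes block $r$ first, then block $r-1$, and so on, and the first block $k$ (counted downward from $r$) in which the two differ decides the comparison: there one gets $\mu^{(k)} <_L \lambda^{(k)}$ with $\mu^{(i)} = \lambda^{(i)}$ for $i > k$, which is precisely $(\mu) \prec (\lambda)$, while total agreement of all blocks gives $(\mu) = (\lambda)$. I would also observe that enlarging any $n_i$ merely appends equal trailing zeros to the right-to-left reading of block $i$, so the truth value of the comparison is independent of the $n_i$; this is what allows the finite support condition to pass unchanged to the limit.

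The part demanding care is the bookkeeping of the reversals that must compose correctly: $\leq_{RL}$ reads from the right, $\rev$ sorts parts increasingly, the order $\prec$ compares blocks from the right, and $<_L$ orders parts largest-first. One must verify that these combine to produce $<_L$ and not its opposite, and that differing lengths $\ell(\mu^{(i)}) \neq \ell(\lambda^{(i)})$ do not misalign the positional comparison; the right-alignment of the nonzero parts within each block of fixed width $n_i$ is exactly what keeps the alignment honest. A small sanity check, such as confirming at $r=1$ that the canonical compositions $(0,0,3)$ and $(0,1,2)$ correctly register $(3) \not\preceq (2,1)$ (so that $m_{(3)}$ is absent from $s_{(2,1)}$), guards against a reversed inequality or off-by-one slip. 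With the equivalence established, taking $n_1,\ldots,n_r \to \infty$ yields $\cS_{(\lambda)} = \sum_{(\mu) \preceq (\lambda)} K^{(\lambda)}_{(\mu)} m_{(\mu)}$ with non-negative coefficients, and Corollary \ref{leading coefficient is 1} identifies the top coefficient as $1$, giving exactly the claimed expansion.
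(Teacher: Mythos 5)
Your proposal is correct and follows essentially the same route as the paper, which presents this theorem as an immediate consequence of the preceding Corollary (the finite-variable $\leq_{RL}$-triangularity from Reiner--Shimozono), the Remark identifying the stable coefficients with the multi-symmetric Kostka numbers, and Corollary \ref{leading coefficient is 1}. The order-theoretic translation you spell out — that $\leq_{RL}$ on the right-aligned canonical compositions coincides blockwise-from-the-right with $<_L$ and hence with $\preceq$ — is exactly the step the paper leaves implicit, and your bookkeeping of the reversals and zero-padding is accurate.
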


We immediately conclude the following:

\begin{cor}\label{Z basis Cor.}
    The multi-symmetric Schur functions are a $\mathbb{Z}$-basis of $\Lambda_{\mathbb{Z}}(X_1|\cdots | X_r).$
\end{cor}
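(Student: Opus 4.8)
The plan is to deduce integrality purely formally from the triangularity already established in Theorem \ref{crude triangularity thm}, combined with Corollary \ref{leading coefficient is 1} and the fact that the monomial multi-symmetric functions $\{m_{(\mu^{(1)}|\cdots|\mu^{(r)})}\}_{\underline{\mu} \in \Par^r}$ form a $\mathbb{Z}$-basis of $\Lambda_{\mathbb{Z}}(X_1|\cdots|X_r)$. The latter fact is immediate: each $\Lambda_{\mathbb{Z}}(X_i)$ has the $m_{\mu^{(i)}}$ as a $\mathbb{Z}$-basis, and a tensor product over $\mathbb{Z}$ of free modules with chosen bases carries the corresponding product basis. So the entire task reduces to showing that the change-of-basis matrix from the $m$'s to the $\cS$'s is invertible over $\mathbb{Z}$.

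First I would observe that the problem is graded: since $\cS_{(\lambda^{(1)}|\cdots|\lambda^{(r)})}$ is homogeneous of total degree $|\lambda^{(1)}|+\cdots+|\lambda^{(r)}|$, every monomial function appearing on the right-hand side of Theorem \ref{crude triangularity thm} has that same total degree. Hence it suffices to treat each graded component separately, and for each fixed degree $d$ there are only finitely many multi-partitions $(\mu^{(1)}|\cdots|\mu^{(r)})$ with $\sum_i |\mu^{(i)}| = d$. I am therefore working with a finite-rank free $\mathbb{Z}$-module in each degree, which is what makes the triangularity argument rigorous. I would then refine the partial order $\prec$ to an arbitrary total order on this finite set of multi-partitions of size $d$. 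With respect to this total order, Theorem \ref{crude triangularity thm} asserts that the matrix expressing the $\cS$'s in terms of the $m$'s is upper unitriangular: the diagonal entries are the leading coefficients $K^{(\lambda^{(1)}|\cdots|\lambda^{(r)})}_{(\lambda^{(1)}|\cdots|\lambda^{(r)})} = 1$ supplied by Corollary \ref{leading coefficient is 1}, while every remaining nonzero entry is indexed by some $(\mu^{(1)}|\cdots|\mu^{(r)}) \prec (\lambda^{(1)}|\cdots|\lambda^{(r)})$ and thus lies strictly below the diagonal. An integer unitriangular matrix has determinant $1$, so it is invertible over $\mathbb{Z}$; therefore the degree-$d$ multi-symmetric Schur functions form a $\mathbb{Z}$-basis of the degree-$d$ component. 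Assembling over all $d$ yields the corollary.

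This argument is essentially formal once Theorem \ref{crude triangularity thm} is available, so I do not expect a serious obstacle. The one point genuinely requiring care is the graded-finiteness observation: it is precisely the homogeneity of the $\cS_{(\lambda^{(1)}|\cdots|\lambda^{(r)})}$ that guarantees $\prec$ restricts to a partial order on a \emph{finite} set in each degree, which may then be refined to a total order producing a genuine finite unitriangular matrix. Without this grading one would be forced to invert an infinite transition matrix, where determinant-based reasoning is no longer available; the grading removes that difficulty entirely and reduces everything to the elementary fact that a unipotent integer matrix lies in $\mathrm{GL}$ over $\mathbb{Z}$.
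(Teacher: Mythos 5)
Your argument is correct and is exactly the route the paper takes: the corollary is stated as an immediate consequence of the unitriangularity in Theorem \ref{crude triangularity thm} (with leading coefficient $1$ from Corollary \ref{leading coefficient is 1} and integrality of the Kostka coefficients from Theorem \ref{Kostka coeff thm}), and your degree-by-degree refinement of $\prec$ to a total order is just the standard elaboration of that "immediate" step.
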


\begin{remark}
    The ordering $\prec$ is rather crude and does not fully describe the monomial support of each multi-symmetric Schur function even after accounting for degree. This is easily seen in the $r= 1$ case as the ordering $\prec$ does not agree with the dominance ordering for partitions. For example, $(2,2,2) \prec (3,1,1,1)$ but $(2,2,2)$ and $(3,1,1,1)$ are incomparable with respect to the dominance ordering. Diagrammatically, this is explained by the non-existence of a semi-standard filling of $(3,1,1,1)$ with content $(2,2,2).$
\end{remark}

In order to give a full description of the monomial supports of the multi-symmetric Schur functions, we will use a complete characterization for the monomial supports of key polynomials. 

\begin{defn}\cite{MTY19}\label{key moves defn}
    For $\alpha \in \mathbb{Z}_{\geq 0}^n$ and $1 \leq i < j \leq n$ let $t_{i,j}(\alpha)$ be the vector obtained by interchanging $\alpha_i$ and $\alpha_j$, and let $m_{i,j}(\alpha) := \alpha + e_i-e_j$ where for $1\leq k \leq n$ we let $e_k$ be the standard basis vector. For $\beta \in \mathbb{Z}_{\geq 0}^{n}$ define $\beta <_{\kappa} \alpha $ if $\beta$ can be obtained from $\alpha$ by a sequence of moves $t_{i,j}$ for $\alpha_i < \alpha_j$ and $m_{i,j}$ for $\alpha_i < \alpha_j -1.$ 
\end{defn}

The next triangularity result was first conjectured by Monical--Tokcan--Yong \cite{MTY19} and proven by Fan--Guo--Peng--Sun \cite{FGPS}. 

\begin{thm}\cite{FGPS}\label{monomials in key polys thm}
    For $\alpha \in \mathbb{Z}_{\geq 0}^{n}$, there exist positive integers $d_{\alpha,\beta} > 0$ such that
    $$\cK_{\alpha} = x^{\alpha} + \sum_{\beta <_{\kappa} \alpha} d_{\alpha, \beta} x^{\beta}.$$ 
\end{thm}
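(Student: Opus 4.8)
The plan is to establish the claimed identity by proving the two inclusions of $\operatorname{supp}(\cK_\alpha) = \{\alpha\}\cup\{\beta : \beta <_\kappa \alpha\}$ separately, together with positivity of every coefficient indexed by the down-set. The normalization $[x^\alpha]\cK_\alpha = 1$ is already recorded in the Reiner--Shimozono triangularity stated just above this theorem, so I would take that for granted and concentrate on (A) the containment $\operatorname{supp}(\cK_\alpha)\subseteq\{\beta : \beta<_\kappa\alpha\}\cup\{\alpha\}$, and (B) the fullness, that every $\beta<_\kappa\alpha$ actually occurs with $d_{\alpha,\beta}>0$.

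For direction (A) I would induct on the number of inversions of $\alpha$, using the defining recursion $\cK_{s_i\alpha}=\xi_i(\cK_\alpha)$ for $\alpha_i>\alpha_{i+1}$. The base case ($\alpha$ weakly decreasing, $\cK_\alpha=x^\alpha$) is immediate since no admissible move exists. Since passing to a sum and then to $\xi_i$ can only delete monomials, $\operatorname{supp}(\xi_i\cK_\alpha)\subseteq\bigcup_{\gamma\in\operatorname{supp}(\cK_\alpha)}\operatorname{supp}(\xi_i x^\gamma)$, so it suffices to show that for each $\gamma<_\kappa\alpha$ every exponent occurring in $\xi_i(x^\gamma)$ lies in the down-set of $s_i\alpha$. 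Here I would use the explicit monomial action: with $\gamma_i=a,\ \gamma_{i+1}=b$, the operator produces exponents $\gamma^{(k)}$ having $(i,i+1)$-entries $(k,a+b-k)$, so that $\gamma^{(k)}=s_i\gamma+(k-b)(e_i-e_{i+1})$. Because $\alpha$ lies one admissible $t_{i,i+1}$-move below $s_i\alpha$ (as $(s_i\alpha)_i=\alpha_{i+1}<\alpha_i=(s_i\alpha)_{i+1}$), transitivity of $<_\kappa$ reduces the problem to relating each $\gamma^{(k)}$ to $\gamma$ or $s_i\gamma$ through chains of $m_{i,i+1}$ and $t_{i,i+1}$; a short case split according to whether $k$ lies above or below the balance point $(a+b)/2$ (and the analogous, interior-only computation when $\gamma_i\le\gamma_{i+1}$) completes the step.

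Direction (B) is where I expect the real difficulty. My strategy is downward saturation: since $\alpha\in\operatorname{supp}(\cK_\alpha)$, it suffices to prove the support is closed under each elementary move, i.e. if $x^\beta$ occurs in $\cK_\alpha$ and $\beta'$ is obtained from $\beta$ by a single admissible $t_{i,j}$ or $m_{i,j}$, then $x^{\beta'}$ occurs with positive coefficient; induction along move sequences then fills the entire down-set. To get positivity for free I would pass to the representation-theoretic model: by the Demazure character formula (Theorem \ref{Demazure Character Formula}), $\cK_\alpha$ is the character of a Demazure module $\mathcal{V}^\lambda_{\sigma(\lambda)}$, which carries a Demazure crystal $\mathcal{B}_\alpha\subseteq B(\lambda)$, so the coefficient of $x^\beta$ counts crystal elements of weight $\beta$ and is automatically nonnegative. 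Closure under $m_{i,j}$ then becomes the statement that the relevant $\mathfrak{sl}_2$-root strings along the root $e_i-e_j$ inside $B(\lambda)$ stay within the subcrystal $\mathcal{B}_\alpha$, and closure under $t_{i,j}$ becomes a one-sided Weyl-symmetry of the weight set of $\mathcal{B}_\alpha$.

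The technical heart — and the step I expect to resist a short argument — is verifying these closure properties against the \emph{parabolic} nature of the Demazure subcrystal: a root string beginning inside $\mathcal{B}_\alpha$ can a priori leave it, and one must show that the admissibility conditions ($\beta_i<\beta_j$ for $t_{i,j}$ and $\beta_i<\beta_j-1$ for $m_{i,j}$) are precisely what keep the string, or its reflection, inside. This is the combinatorial core of the Fan--Guo--Peng--Sun argument; I would either carry it out through a careful analysis of the Kashiwara operators on $\mathcal{B}_\alpha$, or else replace the crystal model by Kohnert's rule for $\cK_\alpha$ and realize each elementary move as an explicit cell-drop on Kohnert diagrams, proving closure by exhibiting, for each $\beta$ in the support, a diagram one Kohnert move away from weight $\beta'$. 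In either route both the positivity and the no-gaps phenomenon funnel through this single closure lemma, which is the genuine obstacle.
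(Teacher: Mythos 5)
The paper does not actually prove this statement: it is quoted from Fan--Guo--Peng--Sun \cite{FGPS} (resolving a conjecture of Monical--Tokcan--Yong \cite{MTY19}), so there is no internal proof to compare yours against. Judged on its own terms, your proposal is a reasonable map of the two halves of the argument, but it is not a proof. Direction (A) is essentially workable modulo the unfinished case split: the containment $\operatorname{supp}(\xi_i\cK_\alpha)\subseteq\bigcup_\gamma\operatorname{supp}(\xi_i x^\gamma)$ holds by linearity alone, and the explicit action of $\xi_i$ on monomials reduces the inductive step to a finite verification; do note, however, that $\xi_i(x^\gamma)$ carries \emph{negative} terms when $\gamma_i<\gamma_{i+1}$, so the justification ``can only delete monomials'' is not literally correct, even though the support containment you need survives.

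Direction (B) is where the theorem actually lives, and you have not supplied it. You correctly observe that nonnegativity of the coefficients is free (from Proposition \ref{key poly combinatorial formula} or the Demazure character formula), so everything reduces to the saturation statement: if $x^\beta$ occurs in $\cK_\alpha$ and $\beta'$ is one admissible $t_{i,j}$ or $m_{i,j}$ move below $\beta$, then $x^{\beta'}$ occurs. But both of your proposed routes stall precisely there. The crystal route is genuinely problematic because the moves involve arbitrary pairs $i<j$, hence non-simple roots $e_i-e_j$; Demazure subcrystals enjoy a string property only for \emph{simple} roots, so the assertion that the relevant root strings (or their reflections) stay inside the subcrystal is not a standard fact --- it is essentially equivalent to the theorem being proved. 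The Kohnert-diagram alternative likewise requires exhibiting the explicit cell moves, which you defer. Since you label this closure lemma as ``the genuine obstacle'' and leave it unresolved, the proposal identifies where the difficulty sits but does not constitute a proof; to complete it you would need to reproduce the core combinatorial argument of \cite{FGPS}.
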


For the next lemma we assume partitions have infinite strings of $0$'s adjoined after their final nonzero entry.   

\begin{lem}\label{Moves lemma}
    For $(\mu^{(1)}|\cdots |\mu^{(r)}),(\lambda^{(1)}|\cdots | \lambda^{(r)}) \in \Par^r$ and $(n_1|\ldots|n_r)$ with $n_i \geq |\lambda^{(1)}| + \ldots + |\lambda^{(r)}|$ for all $1\leq i \leq r$, the monomial $x^{(\mu^{(1)}*0^{n_1- \ell(\mu^{(1)})})*\cdots *(\mu^{(r)}*0^{n_r- \ell(\mu^{(1)})})}$ appears with a nonzero coefficient in the monomial expansion of $\cS_{(\lambda^{(1)}|\cdots | \lambda^{(r)})}^{(n_1|\ldots|n_r)}(x_1,\ldots,x_{n_1},\ldots,x_{n_1+\ldots +n_{r-1}+1},\ldots,x_{n_1+\ldots+n_{r}})$ if and only if $(\mu^{(1)}|\cdots |\mu^{(r)})$ can be obtained from $(\lambda^{(1)}|\cdots | \lambda^{(r)})$ by a sequence of moves on multi-partitions $(\nu^{(1)}|\cdots|\nu^{(r)}) \in \Par^r$ of the following types:
    \begin{enumerate}
        \item[\textbf{Move 1}:] \label{Moves} If $\nu^{(i_1)}_{j_1}< \nu^{(i_2)}_{j_2}$ for $i_1< i_2$, then swap the entries $\nu^{(i_1)}_{j_1},\nu^{(i_2)}_{j_2}$ and re-sort each partition.
        \item[\textbf{Move 2}:]  If $\nu^{(i)}_{j} > \nu^{(i)}_{k}+1$ for $j< k$, then replace $\nu^{(i)}$ by $\nu^{(i)}+e_{k}-e_j$ and re-sort the partition.
        \item[\textbf{Move 3}:]  If $\nu_{j_1}^{(i_1)} +1 < \nu^{(i_2)}_{j_2}$ for $i_1 < i_2$, then replace the entries $\nu_{j_1}^{(i_1)},\nu^{(i_2)}_{j_2}$ by $\nu_{j_1}^{(i_1)}+1,\nu^{(i_2)}_{j_2}-1$ respectively and re-sort each partition. 
    \end{enumerate}
\end{lem}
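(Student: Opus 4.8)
The plan is to reduce everything to the Fan--Guo--Peng--Sun characterization (Theorem \ref{monomials in key polys thm}) and then translate its generating moves $t_{i,j}, m_{i,j}$ (Definition \ref{key moves defn}) into the three multi-partition moves. Write $N_i = n_1 + \cdots + n_i$, so the variables split into $r$ consecutive \emph{windows} $W_i = \{N_{i-1}+1,\ldots,N_i\}$, and set
\begin{align*}
\gamma &:= (0^{n_1-\ell(\lambda^{(1)})}*\rev(\lambda^{(1)}))*\cdots*(0^{n_r-\ell(\lambda^{(r)})}*\rev(\lambda^{(r)})),\\
\delta &:= (\mu^{(1)}*0^{n_1-\ell(\mu^{(1)})})*\cdots*(\mu^{(r)}*0^{n_r-\ell(\mu^{(r)})}),
\end{align*}
so that $\cS^{(n_1|\cdots|n_r)}_{(\lambda^{(1)}|\cdots|\lambda^{(r)})} = \cK_\gamma$ and the monomial in question is $x^\delta$. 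By Theorem \ref{monomials in key polys thm}, $x^\delta$ appears iff $\delta \leq_{\kappa} \gamma$ (allowing equality). Since $\cK_\gamma$ is symmetric in the variables of each window (Definition \ref{multi-symmetric Schur poly defn}), its support---hence the lower set $\{\beta : \beta \leq_{\kappa} \gamma\}$---is invariant under permuting positions within each $W_i$. I would therefore work modulo this symmetry, recording for each composition $\beta$ only the multi-partition $\overline{\beta}$ obtained by sorting each window into weakly decreasing order; note $\overline{\gamma} = (\lambda^{(1)}|\cdots|\lambda^{(r)})$ and $\overline{\delta} = (\mu^{(1)}|\cdots|\mu^{(r)})$.

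For the forward direction I would build a dictionary between the generators of $\leq_{\kappa}$ and Moves 1--3. Given a chain $\gamma = \beta^{(0)} \to \cdots \to \beta^{(m)} = \delta$ of $t$- and $m$-moves, I classify each step by whether its two positions $i < j$ lie in the same window or in windows $i_1 < i_2$. A within-window $t_{i,j}$ leaves $\overline{\beta}$ unchanged; a cross-window $t_{i,j}$ (which requires $\beta_i < \beta_j$, i.e.\ the larger value sits in the later window) sorts to exactly Move 1. A within-window $m_{i,j}$ raises the smaller entry and lowers the larger one under the gap hypothesis $\beta_i < \beta_j - 1$, which after sorting is Move 2; a cross-window $m_{i,j}$ raises an entry in the earlier window and lowers one in the later window under the same gap hypothesis, which is precisely the cross-component analogue, Move 3. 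Thus every step projects to one of Moves 1--3 or to the identity, so $\overline{\delta} = (\mu^{(1)}|\cdots|\mu^{(r)})$ is reachable from $(\lambda^{(1)}|\cdots|\lambda^{(r)})$ by the three moves.

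For the converse I would realize each multi-partition Move by an actual $\leq_{\kappa}$-step. Using the window-symmetry of the lower set, I may first permute within windows (for free) to place the two entries involved at positions $i < j$ for which the relevant inequality holds ($\beta_i < \beta_j$ for a swap, $\beta_i < \beta_j - 1$ for a redistribution), then apply the matching $t_{i,j}$ or $m_{i,j}$, then re-sort; cross-window moves automatically satisfy $i<j$ since earlier windows occupy smaller positions, and the conditions in Moves 1 and 3 are exactly the FGPS conditions. By transitivity this builds a chain from $\gamma$ down to a composition sorting to $(\mu^{(1)}|\cdots|\mu^{(r)})$, whence $x^\delta$ appears. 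The one place the hypothesis $n_i \geq |\lambda^{(1)}| + \cdots + |\lambda^{(r)}|$ enters---and the main bookkeeping obstacle---is guaranteeing there is always room: all three moves preserve the total degree $D = |\lambda^{(1)}| + \cdots + |\lambda^{(r)}|$, so every intermediate component is a partition of at most $D$ with at most $D$ parts, which fits in a window of size $n_i \geq D$ after padding by the adjoined zeros. This ensures every move-reachable multi-partition has a valid composition representative and that no FGPS step is blocked by a window boundary, so the two reachability notions coincide.
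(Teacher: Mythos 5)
Your proposal is correct and follows essentially the same route as the paper: reduce to the Fan--Guo--Peng--Sun characterization of the monomial support of $\cK_\gamma$, quotient by the within-window symmetry so only the sorted window contents matter, match cross-window $t$-moves, within-window $m$-moves, and cross-window $m$-moves to Moves 1, 2, 3 respectively, and use $n_i \geq |\lambda^{(1)}|+\cdots+|\lambda^{(r)}|$ to guarantee every move-reachable multi-partition has a composition representative in the given windows. Your justification of the window-invariance of the lower set via the symmetry of $\cK_\gamma$ (rather than via within-window $t$-moves, which only sort in one direction) is a slightly cleaner way to make the same reduction, but the argument is the paper's.
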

\begin{proof}
    First, by Theorem \ref{monomials in key polys thm} the monomial $x^{(\mu^{(1)}*0^{n_1- \ell(\mu^{(1)})})*\cdots (\mu^{(r)}*0^{n_r- \ell(\mu^{(1)})})}$ appears with a nonzero coefficient in the monomial expansion of $\cS_{(\lambda^{(1)}|\cdots | \lambda^{(r)})}^{(n_1|\ldots|n_r)}(x_1,\ldots,x_{n_1},\ldots,x_{n_1+\ldots +n_{r-1}+1},\ldots,x_{n_1+\ldots+n_{r}})$ if and only if $(\mu^{(1)}*0^{n_1- \ell(\mu^{(1)})})*\cdots * (\mu^{(r)}*0^{n_r- \ell(\mu^{(1)})})$ may be obtained from $(0^{n_1- \ell(\lambda^{(1)})}*\rev(\lambda^{(1)}))*\cdots (0^{n_r-\ell(\lambda^{(r)})}*\rev(\lambda^{(r)}))$ using moves $t_{i,j}$ or $m_{i,j}$ from Definition \ref{key moves defn} for $i < j$ when appropriate. This allows, for instance, for one to sort any sub-string of entries into weakly decreasing order.  Therefore, since we are only considering the vector $(\mu^{(1)}*0^{n_1- \ell(\mu^{(1)})})*\cdots * (\mu^{(r)}*0^{n_r- \ell(\mu^{(1)})})$ we need only to concern ourselves with the entries that appear in each window $\{n_1+\ldots + n_{i-1}+1,\ldots, n_1+\ldots + n_{i-1}+n_i \}$ after applying the moves $t_{i,j}$ and $m_{i,j}$ without worrying about their order. Thus our options for arriving at $(\mu^{(1)}*0^{n_1- \ell(\mu^{(1)})})*\cdots * (\mu^{(r)}*0^{n_r- \ell(\mu^{(r)})})$ are exactly to 
    \begin{enumerate}
        \item use a move $t_{i,j}$ between different windows, 
        \item use a move $m_{i,j}$ within the same window, 
        \item or use a move $m_{i,j}$ between different windows.
    \end{enumerate}
    After re-sorting each window, these moves align exactly with Moves 1,2, and 3 respectively. Importantly, we have taken $n_1,\ldots, n_r$ to be sufficiently large so that any possible $(\nu^{(1)}|\cdots | \nu^{(r)}) \in \Par^r$ reachable using Moves 1,2, and 3 will actually appear as some $(\nu^{(1)}*0^{n_1- \ell(\nu^{(1)})})*\cdots * (\nu^{(r)}*0^{n_r- \ell(\nu^{(1)})})$ which can be obtained using the above $t_{i,j}$ and $m_{i,j}$ moves.
\end{proof}

We now may formalize these moves into a partial order on multi-partitions.

\begin{defn}
    We define the partial order $\triangleleft$ on $\Par^r$ by $(\mu^{(1)}|\cdots | \mu^{(r)}) \triangleleft (\lambda^{(1)}|\cdots | \lambda^{(r)})$ if $(\mu^{(1)}|\cdots | \mu^{(r)})$ may be obtained from $(\lambda^{(1)}|\cdots | \lambda^{(r)})$ by repeatedly applying Move 1, Move 2, or Move 3 from \ref{Moves} above.
\end{defn}

\begin{remark}
    For $r = 1$, the ordering $\triangleleft$ agrees with the usual dominance ordering on $\Par$ justifying our notational choice. It is clear that $\triangleleft$ does in fact define a partial order. In particular, if $(\mu^{(1)}|\cdots | \mu^{(r)}) \triangleleft (\lambda^{(1)}|\cdots | \lambda^{(r)})$, then we cannot also have $ (\lambda^{(1)}|\cdots | \lambda^{(r)}) \triangleleft (\mu^{(1)}|\cdots | \mu^{(r)})$ since Moves 1,2, and 3 all strictly lower the multi-partition with respect to the ordering $\prec.$ In other words, if $(\mu^{(1)}|\cdots | \mu^{(r)}) \triangleleft (\lambda^{(1)}|\cdots | \lambda^{(r)})$, then $(\mu^{(1)}|\cdots | \mu^{(r)}) \prec (\lambda^{(1)}|\cdots | \lambda^{(r)})$ but not necessarily the other way around. This relationship between orderings is expected because for $r =1$ the dominance order refines the lexicographic order on partitions.
\end{remark}

\begin{example}
    Starting with $(3,1|2,1,1|4,1,1|3)$ we see the following:
    \begin{enumerate}
        \item (Move 1) $(3,2|1,1,1|4,1,1|3) \triangleleft (3,1|2,1,1|4,1,1|3)$ 
        \item (Move 1) $(4,3,1|2,1,1|1,1|3) \triangleleft (3,1|2,1,1|4,1,1|3)$ 
        \item (Move 2) $(3,1|2,1,1|3,2,1|3) \triangleleft (3,1|2,1,1|4,1,1|3)$ 
        \item (Move 2) $(3,1|2,1,1|4,1,1|2,1) \triangleleft (3,1|2,1,1|4,1,1|3)$ 
        \item (Move 3) $(3,1|3,1,1|3,1,1|2,1) \triangleleft (3,1|2,1,1|4,1,1|3)$ 
        \item (Move 3) $(3,1,1|2,1,1|4,1,1|2) \triangleleft (3,1|2,1,1|4,1,1|3)$ 
    \end{enumerate}
\end{example}

Using Lemma \ref{Moves lemma}, we give a refined version of the triangularity in Theorem \ref{crude triangularity thm}.

\begin{thm}\label{complete support of multi-sym Schur}
For all $(\lambda^{(r)}|\cdots | \lambda^{(r)}) \in \Par^r$ 
$$\cS_{(\lambda^{(r)}|\cdots | \lambda^{(r)})} = m_{(\lambda^{(r)}|\cdots | \lambda^{(r)})} + \sum_{(\mu^{(1)}|\cdots | \mu^{(r)})\triangleleft (\lambda^{(r)}|\cdots | \lambda^{(r)})} K_{(\mu^{(1)}|\cdots | \mu^{(r)})}^{(\lambda^{(r)}|\cdots | \lambda^{(r)})} m_{(\mu^{(1)}|\cdots | \mu^{(r)})} $$ and furthermore each above Kostka-coefficient is \textit{nonzero}.
\end{thm}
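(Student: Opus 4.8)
The plan is to upgrade the crude triangularity of Theorem \ref{crude triangularity thm} by transferring the finite-variable support characterization of Lemma \ref{Moves lemma} to the stable limit. Write $\underline{\lambda} = (\lambda^{(1)}|\cdots|\lambda^{(r)})$ and $\underline{\mu} = (\mu^{(1)}|\cdots|\mu^{(r)})$ for brevity. Theorem \ref{crude triangularity thm} together with Corollary \ref{leading coefficient is 1} already gives $\cS_{\underline{\lambda}} = m_{\underline{\lambda}} + \sum_{\underline{\mu} \prec \underline{\lambda}} K_{\underline{\mu}}^{\underline{\lambda}} m_{\underline{\mu}}$ with leading coefficient $1$; since $\triangleleft$ refines $\prec$, it remains only to prove the equivalence $K_{\underline{\mu}}^{\underline{\lambda}} \neq 0 \iff \underline{\mu} \trianglelefteq \underline{\lambda}$, which simultaneously restricts the summation index to $\underline{\mu} \triangleleft \underline{\lambda}$ and certifies that every surviving coefficient is nonzero.

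First I would reduce the stable Kostka number to a single monomial coefficient at finite level. For $n_i \geq \ell(\lambda^{(i)})$ the polynomial $\cS_{\underline{\lambda}}^{(n_1|\cdots|n_r)}$ is symmetric in each block $X_i^{(n_i)}$, so the coefficient $c_{\underline{\mu},\underline{\lambda}}^{(n_1|\cdots|n_r)}$ of $m_{\underline{\mu}}$ coincides with the coefficient of the single representative monomial $x^{(\mu^{(1)}*0^{n_1-\ell(\mu^{(1)})})*\cdots*(\mu^{(r)}*0^{n_r-\ell(\mu^{(r)})})}$, whose exponent is weakly decreasing inside each window and hence a sorted representative of $m_{\underline{\mu}}$. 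By Lemma \ref{mutlisym stability lemma} and the stabilization of these coefficients recorded in the remark after the Reiner--Shimozono corollary, $c_{\underline{\mu},\underline{\lambda}}^{(n_1|\cdots|n_r)}$ equals $K_{\underline{\mu}}^{\underline{\lambda}}$ once every $n_i$ is large; in particular I may take $n_i \geq |\lambda^{(1)}| + \cdots + |\lambda^{(r)}|$, the hypothesis demanded by Lemma \ref{Moves lemma}.

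Next I would invoke Lemma \ref{Moves lemma} directly: this representative monomial appears with nonzero coefficient in $\cS_{\underline{\lambda}}^{(n_1|\cdots|n_r)}$ if and only if $\underline{\mu}$ can be obtained from $\underline{\lambda}$ by a sequence of Moves 1, 2, and 3. By the very definition of $\triangleleft$, the latter condition is exactly $\underline{\mu} \trianglelefteq \underline{\lambda}$, with the empty sequence accounting for the reflexive case $\underline{\mu} = \underline{\lambda}$. Chaining this with the coefficient identification of the previous step yields $K_{\underline{\mu}}^{\underline{\lambda}} \neq 0 \iff \underline{\mu} \trianglelefteq \underline{\lambda}$. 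Hence the monomial support of $\cS_{\underline{\lambda}}$ is exactly $\{ \underline{\mu} : \underline{\mu} \trianglelefteq \underline{\lambda} \}$, and combined with $K_{\underline{\lambda}}^{\underline{\lambda}} = 1$ this is precisely the asserted expansion, with each coefficient indexed by $\underline{\mu} \triangleleft \underline{\lambda}$ nonzero.

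The argument introduces no genuinely new idea, so the main obstacle is bookkeeping rather than insight. The delicate point is that the strict positivity $d_{\alpha,\beta} > 0$ in Theorem \ref{monomials in key polys thm} is what rules out cancellation and makes ``reachable by moves'' equivalent to ``nonzero coefficient'' in both directions; I must check this survives the passage from the key-polynomial moves $t_{i,j}, m_{i,j}$ to the re-sorted Moves 1--3. The second thing to verify is that the threshold $n_i \geq |\lambda^{(1)}| + \cdots + |\lambda^{(r)}|$ is large enough that every multi-partition reachable by Moves 1--3 embeds as a padded composition reachable by the finite moves within the windows, so that no reachable $\underline{\mu}$ is lost and no spurious one is created. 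Both are exactly the hypotheses under which Lemma \ref{Moves lemma} is stated, so the theorem follows.
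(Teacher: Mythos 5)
Your proposal is correct and follows essentially the same route as the paper: the paper's own (very terse) proof is exactly the observation that Lemma \ref{Moves lemma} is stated with $n_1,\ldots,n_r$ large enough for the monomial expansions to have stabilized, so the reachability-by-Moves characterization there is literally the statement $K_{(\mu^{(1)}|\cdots|\mu^{(r)})}^{(\lambda^{(1)}|\cdots|\lambda^{(r)})}\neq 0 \iff (\mu^{(1)}|\cdots|\mu^{(r)})\trianglelefteq(\lambda^{(1)}|\cdots|\lambda^{(r)})$, combined with Corollary \ref{leading coefficient is 1} for the leading term. Your write-up just makes explicit the bookkeeping (block symmetry reducing $m_{\underline{\mu}}$ to a single representative monomial, and the stabilization of the finite-variable coefficients) that the paper leaves implicit.
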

\begin{proof}
  This follows from Lemma \ref{Moves lemma} since in that lemma we assumed that $n_1,\ldots,n_r$ are sufficiently large in order for the multi-symmetric monomial expansions of the multi-symmetric Schur functions to stabilize.
\end{proof}

\begin{remark}
    Equivalently, using Theorem \ref{Kostka coeff thm} we know that $\cL_{(\mu^{(1)}|\cdots|\mu^{(r)})}(\lambda^{(1)}|\cdots|\lambda^{(r)}) \neq \emptyset$ if and only if $(\mu^{(1)}|\cdots|\mu^{(r)}) \trianglelefteq (\lambda^{(1)}|\cdots|\lambda^{(r)}).$
\end{remark}

\begin{example}
    Starting from $(2,1|2|3)$ we see that $(3,1|2|2) \triangleleft (2,1|2|3)$ and $(2,1,1|1|3) \triangleleft (2,1|2|3)$ which we may verify diagrammatically from the following two allowable diagram fillings of shape $(2,1|2|3)$ with contents $(3,1|2|2)$ and $(2,1,1|1|3)$ respectively:
\begin{center}

\begin{ytableau}
       \none & \none & \none & *(green) 1_1 \\
       \none  & *(red) 1_1 & *(yellow) 1_2 & *(green) 1_3 \\
         *(red) 1_1 & *(red) 2_1  & *(yellow) 1_2 & *(green) 1_3 \\
         *(red) \omega_1 & *(red) \omega_1 + 1 & *(yellow) \omega_2 & *(green) \omega_3 \\
        \end{ytableau} ~~ 
        \begin{ytableau}
       \none & \none & \none & *(green) 1_3 \\
       \none  & *(red) 1_1 & *(yellow) 2_1 & *(green) 1_3 \\
         *(red) 1_1 & *(red) 2_1  & *(yellow) 1_2 & *(green) 1_3 \\
         *(red) \omega_1 & *(red) \omega_1 + 1 & *(yellow) \omega_2 & *(green) \omega_3\\
        \end{ytableau}
        
\end{center}

\end{example}

\section{Schur Expansion}

In this section, we show that the multi-symmetric Schur functions $\cS_{(\lambda^{(1)}|\cdots | \lambda^{(r)})}$ have non-negative expansions into the ordinary Schur functions $s_{(\mu^{(1)}|\cdots |\mu^{(r)})}.$ The main result of this section, Theorem \ref{Schur positivity theorem}, will follow from a simple representation theoretic statement about Demazure modules in type $\GL$ (Lemma \ref{parabolic Demazure lemma}). Recall this paper's notation regarding Demazure modules from Definition \ref{Demazure module defn}.

\begin{defn}
    Given $(n_1|\ldots | n_r)$ with each $n_i \geq 1$ define $\mathrm{P}(n_1|\ldots|n_r) \leq \mathrm{GL}_{n_1+\ldots+n_r}$ as the unique parabolic subgroup containing the Borel subgroup $\mathrm{B}_n$ and the Levi subgroup $L(n_1|\ldots|n_r):= \mathrm{GL}_{n_1}\times \cdots \times \mathrm{GL}_{n_r} \leq \mathrm{GL}_{n_1+\ldots+n_r}.$
\end{defn}

We will primarily be interested in a special family of Demazure modules.

\begin{defn}
    For $(\lambda^{(1)}|\cdots|\lambda^{(r)}) \in \Par^r$ and $(n_1|\ldots|n_r)$ with $n_i \geq \ell(\lambda^{(i)})$ for all $1\leq i \leq r$ define the Demazure module 
    $$\mathcal{W}_{(\lambda^{(1)}|\cdots | \lambda^{(r)})}^{(n_1|\ldots|n_r)}:= \mathcal{V}^{\sort(\lambda^{(1)}*\cdots *\lambda^{(r)})*0^{n_1+\ldots+n_r - (\ell(\lambda^{(1)})+\ldots + \ell(\lambda^{(r)}))}}_{(0^{n_1-\ell(\lambda^{(1)})}*\rev(\lambda^{(1)}))       *\cdots*(0^{n_r-\ell(\lambda^{(r)})}*\rev(\lambda^{(r)}))}$$
\end{defn}

The Demazure modules $\mathcal{W}_{(\lambda^{(1)}|\cdots | \lambda^{(r)})}^{(n_1|\ldots|n_r)}$ are directly related to the multi-symmetric Schur polynomials by the following:

\begin{prop}
$$\Char(\mathcal{W}_{(\lambda^{(1)}|\cdots | \lambda^{(r)})}^{(n_1|\ldots|n_r)}) = \cS_{(\lambda^{(1)}|\cdots | \lambda^{(r)})}^{(n_1|\ldots|n_r)}$$
\end{prop}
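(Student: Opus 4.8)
The plan is to recognize this proposition as an immediate consequence of the Demazure character formula (Theorem \ref{Demazure Character Formula}) together with the defining formula for the multi-symmetric Schur polynomial (Definition \ref{multi-symmetric Schur poly defn}), once the bookkeeping between the upper and lower indices of the Demazure module has been unwound. There is no deep content here: the whole argument is a definitional matching, and the only thing requiring care is a multiset comparison.

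First I would introduce notation to separate the two indices. Writing $N := n_1 + \ldots + n_r$, set
$$\beta := (0^{n_1-\ell(\lambda^{(1)})}*\rev(\lambda^{(1)}))*\cdots*(0^{n_r-\ell(\lambda^{(r)})}*\rev(\lambda^{(r)})) \in \mathbb{Z}_{\geq 0}^{N},$$
which is exactly the composition indexing the key polynomial in Definition \ref{multi-symmetric Schur poly defn}, so that $\cS_{(\lambda^{(1)}|\cdots|\lambda^{(r)})}^{(n_1|\ldots|n_r)} = \cK_{\beta}$ holds by definition. Let $\Lambda := \sort(\lambda^{(1)}*\cdots*\lambda^{(r)})*0^{N-(\ell(\lambda^{(1)})+\ldots+\ell(\lambda^{(r)}))}$ denote the highest weight appearing in the upper index of $\mathcal{W}_{(\lambda^{(1)}|\cdots|\lambda^{(r)})}^{(n_1|\ldots|n_r)}$.

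The key observation, and really the only point to verify, is that $\Lambda$ is a dominant integral weight and that $\beta$ is a rearrangement of $\Lambda$. Dominance is immediate, since $\sort$ produces a weakly decreasing sequence of nonnegative integers and appending zeros preserves this. For the rearrangement claim I would simply compare multisets of entries: the entries of $\beta$ consist of all the parts of $\lambda^{(1)},\ldots,\lambda^{(r)}$ together with $\sum_{i=1}^{r}(n_i - \ell(\lambda^{(i)}))$ copies of $0$, while the entries of $\Lambda$ consist of all the parts of $\sort(\lambda^{(1)}*\cdots*\lambda^{(r)})$, i.e. again all the parts of the $\lambda^{(i)}$, together with $N - \sum_{i}\ell(\lambda^{(i)}) = \sum_{i}(n_i - \ell(\lambda^{(i)}))$ copies of $0$. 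These multisets coincide, so there exists $\sigma \in \mathfrak{S}_{N}$ with $\sigma(\Lambda) = \beta$.

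Given this, the proof closes in one line. By Definition \ref{Demazure module defn} the module $\mathcal{W}_{(\lambda^{(1)}|\cdots|\lambda^{(r)})}^{(n_1|\ldots|n_r)}$ is by construction precisely $\mathcal{V}^{\Lambda}_{\sigma(\Lambda)}$, so Theorem \ref{Demazure Character Formula} yields $\Char(\mathcal{W}_{(\lambda^{(1)}|\cdots|\lambda^{(r)})}^{(n_1|\ldots|n_r)}) = \cK_{\sigma(\Lambda)} = \cK_{\beta} = \cS_{(\lambda^{(1)}|\cdots|\lambda^{(r)})}^{(n_1|\ldots|n_r)}$. I do not expect any genuine obstacle: the entire content is confirming that the dominant upper index $\Lambda$ and the composition $\beta$ in the lower index are permutations of one another, after which the Demazure character formula does all the work.
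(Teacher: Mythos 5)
Your proposal is correct and takes exactly the same route as the paper, which simply cites the Demazure character formula together with Definition \ref{multi-symmetric Schur poly defn}; your multiset check that the lower index is a rearrangement of the dominant upper index is precisely the bookkeeping the paper leaves implicit.
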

\begin{proof}
    This follows from the Demazure character formula \ref{Demazure Character Formula} and Definition \ref{multi-symmetric Schur poly defn}.
\end{proof}

We require the next simple lemma.

\begin{lem}\label{parabolic Demazure lemma}
    Let $\mu \in \mathbb{Z}_{\geq 0}^{n}$ such that the sub-string $\mu_{i+1},\ldots, \mu_{i+k}$ is weakly increasing. Then the Demazure module $\mathcal{V}^{\sort(\mu)}_{\mu}$ is a $\mathrm{GL}_1^{i}\times \mathrm{GL}_{k} \times \mathrm{GL}_{1}^{n-k-i}$-submodule of $\mathrm{Res}_{\mathrm{GL}_1^{i}\times \mathrm{GL}_{k} \times \mathrm{GL}_{1}^{n-k-i}}^{\mathrm{GL}_n} \mathcal{V}^{\sort(\mu)}.$ 
\end{lem}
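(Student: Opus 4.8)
The plan is to prove the statement at the level of Lie algebras and then pass to the connected group $L(n_1|\dots|n_r) = \mathrm{GL}_1^{i}\times \mathrm{GL}_k \times \mathrm{GL}_1^{n-k-i}$. Write $\mathfrak{l}$ for its Lie algebra, sitting inside $\mathfrak{gl}_n$ as block-diagonal matrices with one $k\times k$ block in rows/columns $i+1,\dots,i+k$ and $1\times 1$ blocks elsewhere, and let $E_{a,b}$ be the matrix units, $e_p=E_{p,p+1}$, $f_p=E_{p+1,p}$. Since $\mathcal{V}^{\sort(\mu)}_{\mu}=\mathcal{U}(\mathfrak{b}_n)v$ is by construction $\mathfrak{b}_n$-stable, it is automatically stable under the Cartan $\mathfrak{h}_n$ of diagonal matrices (which contains all the $\mathrm{GL}_1$ factors and the diagonal of the $k$-block) and under the raising operators $e_p$ for $i+1\le p\le i+k-1$. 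As $\mathfrak{l}\cong \mathfrak{gl}_1^{\oplus i}\oplus \mathfrak{gl}_k\oplus \mathfrak{gl}_1^{\oplus (n-k-i)}$ is generated as a Lie algebra by $\mathfrak{h}_n$ together with these $e_p$ and the lowering operators $f_p$, $i+1\le p\le i+k-1$, it suffices to prove stability of $\mathcal{V}^{\sort(\mu)}_{\mu}$ under each such $f_p$; stability under the connected group $L$ then follows by exponentiation.

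Second, I would show that the generator $v$ is annihilated by each block lowering operator, i.e. $f_p v=0$ for $i+1\le p\le i+k-1$. Here the hypothesis enters: weak increase of $\mu_{i+1},\dots,\mu_{i+k}$ gives $\langle \mu,\alpha_p^\vee\rangle=\mu_p-\mu_{p+1}\le 0$ for each such $p$. Since $v$ is a weight vector of the extremal weight $\mu\in \mathfrak{S}_n\cdot \sort(\mu)$, its weight space is one-dimensional and $\mu$ is a vertex of the weight polytope $\mathrm{conv}(\mathfrak{S}_n\cdot \sort(\mu))$. The $\mathfrak{sl}_2^{(p)}$-string through $\mu$ in the direction $\alpha_p$ has convex hull a segment contained in the polytope and containing $\mu$; a vertex cannot lie in the relative interior of such a segment, so $\mu$ is an endpoint, and the sign condition $\langle\mu,\alpha_p^\vee\rangle\le 0$ forces $\mu$ to be the \emph{bottom} endpoint. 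Hence $\mu-\alpha_p$ is not a weight of $\mathcal{V}^{\sort(\mu)}$ and $f_p v=0$. This vanishing is the key representation-theoretic input, and I expect it to be the main obstacle: one must argue carefully that an extremal weight sits at the end of each of its root strings (via the convexity above, or equivalently via the minimal-length representative $w$ of $\mu=w\,\sort(\mu)$ in $\mathfrak{S}_n/\Stab(\sort(\mu))$ and the Demazure recursion, being attentive to the wall case $\mu_p=\mu_{p+1}$).

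Third, with $f_p v=0$ in hand, I would prove $f_p\,\mathcal{U}(\mathfrak{b}_n)v\subseteq \mathcal{U}(\mathfrak{b}_n)v$ by a commutator induction on degree. A direct bracket computation gives $[f_p,\mathfrak{b}_n]\subseteq \mathfrak{b}_n+\mathbb{C}f_p$ (indeed $[f_p,\mathfrak{b}_n]\subseteq \mathfrak{p}_p$, the minimal parabolic subalgebra): for $a\le b$ one has $[E_{p+1,p},E_{a,b}]=\delta_{p,a}E_{p+1,b}-\delta_{b,p+1}E_{a,p}$, which lands in $\mathfrak{b}_n$ except for the two terms producing $\pm f_p$ and the term $[f_p,e_p]=-h_p\in\mathfrak{h}_n$. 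Then for a PBW monomial $X=xX'$ with $x\in\mathfrak{b}_n$ and $X'$ of lower degree,
$$f_p X v = [f_p,x]X'v + x\,f_p X'v,$$
and writing $[f_p,x]=b+cf_p$ with $b\in\mathfrak{b}_n$, $c\in\mathbb{C}$, every term reduces to $\mathcal{U}(\mathfrak{b}_n)v$ or to a lower-degree expression $f_p X'v\in\mathcal{U}(\mathfrak{b}_n)v$ handled by induction; the base case is precisely $f_p v=0$.

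Finally, combining the three steps, $\mathcal{V}^{\sort(\mu)}_{\mu}$ is stable under every block $e_p$ and $f_p$ and under $\mathfrak{h}_n$, hence under the Lie subalgebra they generate, which is $\mathfrak{l}$. Since $L$ is connected this upgrades to $L$-stability, showing that $\mathcal{V}^{\sort(\mu)}_{\mu}$ is a $\mathrm{GL}_1^{i}\times \mathrm{GL}_k\times \mathrm{GL}_1^{n-k-i}$-submodule of $\Res^{\GL_n}_{L}\mathcal{V}^{\sort(\mu)}$.
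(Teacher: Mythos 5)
Your proof is correct. The paper itself gives no argument here --- it simply defers to Lemma 5.8 of \cite{BWAlmostSymSchur} --- and your write-up is a complete, self-contained version of exactly the argument one expects that citation to contain: reduce to showing stability under the lowering operators $f_p$ of the $\GL_k$ block, kill the generator ($f_p v = 0$ because the extremal weight $\mu$ is a vertex of the weight polytope and $\langle \mu,\alpha_p^\vee\rangle = \mu_p - \mu_{p+1}\le 0$ places it at the bottom of its $\alpha_p$-string), and propagate through $\mathcal{U}(\mathfrak{b}_n)$ via the commutator relation $[f_p,\mathfrak{b}_n]\subseteq \mathfrak{b}_n + \mathbb{C}f_p$. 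All three steps check out, including the wall case $\mu_p=\mu_{p+1}$ and the passage from $\mathfrak{l}$-stability to $L$-stability by connectedness.
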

\begin{proof}
    The argument is identical to that of Lemma 5.8 in \cite{BWAlmostSymSchur}.
\end{proof}

Applying Lemma \ref{parabolic Demazure lemma} to the Demazure modules $\mathcal{W}_{(\lambda^{(1)}|\cdots | \lambda^{(r)})}^{(n_1|\ldots|n_r)}$, we see that:

\begin{prop}\label{parabolic Demazure prop}
    $\mathcal{W}_{(\lambda^{(1)}|\cdots | \lambda^{(r)})}^{(n_1|\ldots|n_r)}$ is a $\mathrm{P}(n_1|\ldots|n_r)$- submodule of $\mathcal{V}^{\sort(\lambda^{(1)}*\cdots *\lambda^{(r)})*0^{n_1+\ldots+n_r - (\ell(\lambda^{(1)})+\ldots + \ell(\lambda^{(r)}))}}.$
\end{prop}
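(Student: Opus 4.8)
The plan is to deduce Proposition~\ref{parabolic Demazure prop} from Lemma~\ref{parabolic Demazure lemma} by identifying the correct weight $\mu$ and the correct block structure, then reading off the parabolic invariance from the Levi invariance. First I would set
$$\mu := (0^{n_1-\ell(\lambda^{(1)})}*\rev(\lambda^{(1)}))*\cdots*(0^{n_r-\ell(\lambda^{(r)})}*\rev(\lambda^{(r)})) \in \mathbb{Z}_{\geq 0}^{n_1+\ldots+n_r},$$
so that by definition $\mathcal{W}_{(\lambda^{(1)}|\cdots|\lambda^{(r)})}^{(n_1|\ldots|n_r)} = \mathcal{V}^{\sort(\mu)}_{\mu}$, noting that $\sort(\mu)=\sort(\lambda^{(1)}*\cdots*\lambda^{(r)})*0^{n_1+\ldots+n_r-(\ell(\lambda^{(1)})+\ldots+\ell(\lambda^{(r)}))}$ since padding by zeros and reversing each block do not change the multiset of entries. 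The crucial structural observation is that within each window $\{n_1+\ldots+n_{i-1}+1,\ldots,n_1+\ldots+n_{i-1}+n_i\}$ the entries of $\mu$, namely $0^{n_i-\ell(\lambda^{(i)})}*\rev(\lambda^{(i)})$, are weakly increasing; this is exactly the hypothesis that makes each block amenable to the lemma.

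Next I would apply Lemma~\ref{parabolic Demazure lemma} iteratively, once for each window, to upgrade the Borel-stability of the Demazure module to stability under the full block-diagonal Levi. The lemma as stated handles one weakly increasing substring at a time, producing stability under a single $\mathrm{GL}_{n_i}$ factor in position $i$ while the other coordinates remain $\mathrm{GL}_1$ factors. Since the windows are disjoint and each window's substring is weakly increasing, applying the lemma to the first window gives invariance under $\mathrm{GL}_{n_1}\times\mathrm{GL}_1^{\,n_2+\ldots+n_r}$, and the analogous applications to the remaining windows give invariance under each $\mathrm{GL}_{n_i}$ factor separately. Because these factors commute and act on disjoint coordinate blocks, simultaneous invariance under all of them yields invariance under the product $L(n_1|\ldots|n_r)=\mathrm{GL}_{n_1}\times\cdots\times\mathrm{GL}_{n_r}$.

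Finally I would combine the Levi-invariance just obtained with the Borel-invariance that $\mathcal{V}^{\sort(\mu)}_{\mu}$ enjoys by its very definition as a $\mathrm{B}_n$-module. Since $\mathrm{P}(n_1|\ldots|n_r)$ is the parabolic generated by $\mathrm{B}_n$ and $L(n_1|\ldots|n_r)$, and since a subspace invariant under a generating set of subgroups is invariant under the group they generate, $\mathcal{W}_{(\lambda^{(1)}|\cdots|\lambda^{(r)})}^{(n_1|\ldots|n_r)}$ is a $\mathrm{P}(n_1|\ldots|n_r)$-submodule of $\mathcal{V}^{\sort(\mu)}$, as claimed. The main obstacle is the bookkeeping in the iterative application: I must check that the lemma can indeed be applied independently to each window without the block structures interfering, which hinges on verifying that each window's substring is genuinely weakly increasing and that the lemma's conclusion for one window is preserved when applying it to another. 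Once the disjointness of the coordinate blocks is made explicit, the combination of Levi- and Borel-invariance into $\mathrm{P}$-invariance is essentially formal.
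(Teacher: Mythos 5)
Your proposal is correct and follows essentially the same route as the paper: invoke Lemma \ref{parabolic Demazure lemma} on each weakly increasing window to get $L(n_1|\ldots|n_r)$-invariance, then combine with the Borel-invariance that the Demazure module has by definition, using that $\mathrm{P}(n_1|\ldots|n_r)$ is generated by $\mathrm{B}_{n_1+\ldots+n_r}$ and the Levi. The only difference is that you spell out the window-by-window iteration of the lemma, which the paper leaves implicit.
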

\begin{proof}
    The group $\mathrm{P}(n_1|\ldots|n_r)$ is the semi-direct product of $\mathrm{B}_{n_1+\ldots+n_r}$ and $L(n_1|\cdots |n_r)$ and we already know $\mathcal{W}_{(\lambda^{(1)}|\cdots | \lambda^{(r)})}^{(n_1|\ldots|n_r)}$ is a $\mathrm{B}_{n_1+\ldots+n_r}$-submodule of $\mathcal{V}^{\sort(\lambda^{(1)}*\cdots *\lambda^{(r)})*0^{n_1+\ldots+n_r - (\ell(\lambda^{(1)})+\ldots + \ell(\lambda^{(r)}))}}$. From Lemma \ref{parabolic Demazure lemma}, find that $\mathcal{W}_{(\lambda^{(1)}|\cdots | \lambda^{(r)})}^{(n_1|\ldots|n_r)}$ is also a $L(n_1|\cdots |n_r)$-submodule. The result follows.
\end{proof}

Here we define the irreducible polynomial representations of $L(n_1|\cdots | n_r)$.

\begin{defn}
    For all $(\mu^{(1)}|\cdots |\mu^{(r)}) \in \Par^r $ and $(n_1|\cdots|n_r)$ with $n_i \geq \ell(\mu^{(i)})$ for all $1 \leq i \leq r$ we let $\mathcal{D}^{(n_1|\ldots|n_r)}_{(\mu_1|\cdots|\mu_r)}$ denote the irreducible $L(n_1|\cdots | n_r)$-module 
    $$\mathcal{D}^{(n_1|\ldots|n_r)}_{(\mu_1|\cdots|\mu_r)}:= \mathcal{V}^{\mu^{(1)}*0^{n_1-\ell(\mu^{(1)})}}\otimes \cdots \otimes \mathcal{V}^{\mu^{(r)}*0^{n_r - \ell(\mu^{(r)})}}.$$
\end{defn}

Importantly, note that 
$$\Char \left( \mathcal{D}^{(n_1|\ldots|n_r)}_{(\mu_1|\cdots|\mu_r)} \right) (x_{1,1},\ldots, x_{1,n_1},\ldots, x_{r,1},\ldots,x_{r,n_r}) = s_{(\mu_1|\cdots|\mu_r)}(X_1^{(n_1)}|\cdots |X_{r}^{(n_r)}).$$

\begin{defn}
    Define the (a priori) rational numbers $(M_{(\mu^{(1)}|\cdots | \mu^{(r)})}^{(\lambda^{(1)}|\cdots | \lambda^{(r)})})_{(\mu^{(1)}|\cdots | \mu^{(r)}),(\lambda^{(1)}|\cdots | \lambda^{(r)}) \in \Par^r}$ by 
    $$\cS_{(\lambda^{(1)}|\cdots | \lambda^{(r)})} = \sum_{(\mu^{(1)}|\cdots | \mu^{(r)}) \in \Par^r} M_{(\mu^{(1)}|\cdots | \mu^{(r)})}^{(\lambda^{(1)}|\cdots | \lambda^{(r)})} s_{(\mu^{(1)}|\cdots | \mu^{(r)})}.$$
\end{defn}

\begin{thm} \label{Schur positivity theorem} 
 For all $(\mu^{(1)}|\cdots | \mu^{(r)}),(\lambda^{(1)}|\cdots | \lambda^{(r)}) \in \Par^r,$ $$M_{(\mu^{(1)}|\cdots | \mu^{(r)})}^{(\lambda^{(1)}|\cdots | \lambda^{(r)})} \in \mathbb{Z}_{\geq 0}.$$ In fact, for $(n_1|\cdots |n_r)$ with $n_i \geq |\lambda^{(1)}|+\ldots + |\lambda^{(r)}|$ for every $1\leq i \leq r,$ $$ M_{(\mu^{(1)}|\cdots | \mu^{(r)})}^{(\lambda^{(1)}|\cdots | \lambda^{(r)})} =  \dim \Hom_{L(n_1|\ldots|n_r)}(\mathcal{D}^{(n_1|\ldots|n_r)}_{(\mu_1|\cdots|\mu_r)}, \mathcal{W}_{(\lambda^{(1)}|\cdots| \lambda^{(r)})}^{(n_1|\ldots|n_r)} ).$$ 
\end{thm}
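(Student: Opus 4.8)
The plan is to exploit the structure established in Proposition \ref{parabolic Demazure prop}, namely that $\mathcal{W}_{(\lambda^{(1)}|\cdots|\lambda^{(r)})}^{(n_1|\ldots|n_r)}$ is a module over the Levi subgroup $L(n_1|\ldots|n_r) = \GL_{n_1}\times\cdots\times\GL_{n_r}$. Since this group is reductive and $\mathcal{W}_{(\lambda^{(1)}|\cdots|\lambda^{(r)})}^{(n_1|\ldots|n_r)}$ is a polynomial representation (being a submodule of the polynomial $\GL_{n_1+\ldots+n_r}$-module $\mathcal{V}^{\sort(\lambda^{(1)}*\cdots*\lambda^{(r)})*0^{\cdots}}$), it decomposes as a direct sum of irreducible polynomial $L(n_1|\ldots|n_r)$-modules. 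The irreducible polynomial representations of $L(n_1|\ldots|n_r)$ are precisely the modules $\mathcal{D}^{(n_1|\ldots|n_r)}_{(\mu^{(1)}|\cdots|\mu^{(r)})}$, so I would write
\[
\mathcal{W}_{(\lambda^{(1)}|\cdots|\lambda^{(r)})}^{(n_1|\ldots|n_r)} \cong \bigoplus_{(\mu^{(1)}|\cdots|\mu^{(r)})\in\Par^r} \left(\mathcal{D}^{(n_1|\ldots|n_r)}_{(\mu^{(1)}|\cdots|\mu^{(r)})}\right)^{\oplus c^{(n_1|\ldots|n_r)}_{(\mu^{(1)}|\cdots|\mu^{(r)})}},
\]
where $c^{(n_1|\ldots|n_r)}_{(\mu^{(1)}|\cdots|\mu^{(r)})} = \Dim\Hom_{L(n_1|\ldots|n_r)}(\mathcal{D}^{(n_1|\ldots|n_r)}_{(\mu^{(1)}|\cdots|\mu^{(r)})}, \mathcal{W}_{(\lambda^{(1)}|\cdots|\lambda^{(r)})}^{(n_1|\ldots|n_r)})$ is, by its very definition, a non-negative integer.

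The next step is to take $L(n_1|\ldots|n_r)$-characters of both sides. Using the character identity $\Char(\mathcal{W}_{(\lambda^{(1)}|\cdots|\lambda^{(r)})}^{(n_1|\ldots|n_r)}) = \cS_{(\lambda^{(1)}|\cdots|\lambda^{(r)})}^{(n_1|\ldots|n_r)}$ established above, together with the recorded computation $\Char(\mathcal{D}^{(n_1|\ldots|n_r)}_{(\mu^{(1)}|\cdots|\mu^{(r)})}) = s_{(\mu^{(1)}|\cdots|\mu^{(r)})}(X_1^{(n_1)}|\cdots|X_r^{(n_r)})$ and the additivity of characters over direct sums, this yields
\[
\cS_{(\lambda^{(1)}|\cdots|\lambda^{(r)})}^{(n_1|\ldots|n_r)} = \sum_{(\mu^{(1)}|\cdots|\mu^{(r)})\in\Par^r} c^{(n_1|\ldots|n_r)}_{(\mu^{(1)}|\cdots|\mu^{(r)})}\, s_{(\mu^{(1)}|\cdots|\mu^{(r)})}(X_1^{(n_1)}|\cdots|X_r^{(n_r)}).
\]

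It then remains to identify these finite multiplicities with the stable coefficients $M_{(\mu^{(1)}|\cdots|\mu^{(r)})}^{(\lambda^{(1)}|\cdots|\lambda^{(r)})}$. Since $\cS_{(\lambda^{(1)}|\cdots|\lambda^{(r)})}$ is homogeneous of total degree $d := |\lambda^{(1)}|+\ldots+|\lambda^{(r)}|$, any multi-partition occurring with nonzero coefficient satisfies $\ell(\mu^{(i)})\leq|\mu^{(i)}|\leq d$ for each $i$. Hence under the hypothesis $n_i\geq d$ for all $i$, every relevant $\mu^{(i)}$ has length at most $n_i$, so the Schur polynomials $s_{(\mu^{(1)}|\cdots|\mu^{(r)})}(X_1^{(n_1)}|\cdots|X_r^{(n_r)})$ appearing are nonzero and, across distinct multi-partitions, linearly independent (they form part of the Schur basis of the tensor product $\Lambda(X_1^{(n_1)}|\cdots|X_r^{(n_r)})$). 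On the other hand, specializing the stable expansion $\cS_{(\lambda^{(1)}|\cdots|\lambda^{(r)})} = \sum_\mu M^{(\lambda^{(1)}|\cdots|\lambda^{(r)})}_{(\mu^{(1)}|\cdots|\mu^{(r)})} s_{(\mu^{(1)}|\cdots|\mu^{(r)})}$ via the variable-restriction of Lemma \ref{mutlisym stability lemma} produces the same polynomial $\cS_{(\lambda^{(1)}|\cdots|\lambda^{(r)})}^{(n_1|\ldots|n_r)}$; because $n_i\geq d$, no Schur function with nonzero coefficient is truncated to zero, so this is again an expansion in the identical linearly independent family. Comparing coefficients forces $M^{(\lambda^{(1)}|\cdots|\lambda^{(r)})}_{(\mu^{(1)}|\cdots|\mu^{(r)})} = c^{(n_1|\ldots|n_r)}_{(\mu^{(1)}|\cdots|\mu^{(r)})}$ for every $(n_1|\ldots|n_r)$ with all $n_i\geq d$; in particular the multiplicity is independent of the chosen $n$ in this range, equals the stated Hom-dimension, and lies in $\mathbb{Z}_{\geq 0}$.

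I expect the main obstacle to be the careful bookkeeping of this final step rather than any deep input: complete reducibility and the classification of irreducible polynomial $L(n_1|\ldots|n_r)$-modules are standard, so the real content is verifying that the degree bound $n_i\geq d$ genuinely prevents any relevant partition from being truncated, and that the two descriptions of $\cS_{(\lambda^{(1)}|\cdots|\lambda^{(r)})}^{(n_1|\ldots|n_r)}$—one from the representation-theoretic decomposition, one from specializing the stable Schur expansion—are expansions in the very same linearly independent family, so that matching coefficients is legitimate and the resulting multiplicities stabilize.
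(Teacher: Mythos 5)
Your proposal is correct and follows essentially the same route as the paper: decompose $\mathcal{W}_{(\lambda^{(1)}|\cdots|\lambda^{(r)})}^{(n_1|\ldots|n_r)}$ into irreducible $L(n_1|\ldots|n_r)$-modules via Proposition \ref{parabolic Demazure prop} and complete reducibility, take characters, and match coefficients against the stable Schur expansion. The only difference is that you spell out the justification for the final coefficient comparison (the degree bound $n_i \geq |\lambda^{(1)}|+\ldots+|\lambda^{(r)}|$ preventing truncation and the linear independence of the finite-variable Schur polynomials), which the paper compresses into the phrase ``since each $n_i$ is sufficiently large.''
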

\begin{proof}
We start by determining the ordinary Schur coefficients of the multi-symmetric Schur polynomials. Let $(\lambda^{(1)}|\cdots | \lambda^{(r)}) \in \Par^r$ and  $(n_1|\cdots |n_r)$ with $n_i \geq |\lambda^{(1)}|+\ldots + |\lambda^{(r)}|$ for every $1\leq i \leq r.$ Using Proposition \ref{parabolic Demazure prop} and the fact that $L(n_1|\cdots|n_r)$ is reductive, we may decompose $\mathcal{W}_{(\lambda^{(1)}|\cdots | \lambda^{(r)})}^{(n_1|\ldots|n_r)}$ into irreducible $L(n_1|\cdots|n_r)$-submodules as 
$$\mathcal{W}_{(\lambda^{(1)}|\cdots | \lambda^{(r)})}^{(n_1|\ldots|n_r)} = \bigoplus_{(\mu^{(1)}|\cdots | \mu^{(r)})} \left( \mathcal{D}^{(n_1|\ldots|n_r)}_{(\mu_1|\cdots|\mu_r)}  \right) ^{\oplus d_{(\mu^{(1)}|\cdots | \mu^{(r)})}}$$ where the $d_{(\mu^{(1)}|\cdots | \mu^{(r)})}$ are some non-negative integer multiplicities.
By applying the character map we see
    \begin{align*}
        \cS_{(\lambda^{(1)}|\cdots | \lambda^{(r)})}^{(n_1|\ldots|n_r)} &= \Char(\mathcal{W}_{(\lambda^{(1)}|\cdots | \lambda^{(r)})}^{(n_1|\ldots|n_r)}) \\
        &= \Char \left( \bigoplus_{(\mu^{(1)}|\cdots | \mu^{(r)})} \left( \mathcal{D}^{(n_1|\ldots|n_r)}_{(\mu_1|\cdots|\mu_r)}  \right) ^{\oplus d_{(\mu^{(1)}|\cdots | \mu^{(r)})}}\right)\\ 
        &= \sum_{(\mu^{(1)}|\cdots | \mu^{(r)})} d_{(\mu^{(1)}|\cdots | \mu^{(r)})} \Char\left( \mathcal{D}^{(n_1|\ldots|n_r)}_{(\mu_1|\cdots|\mu_r)}  \right) \\
        &= \sum_{(\mu^{(1)}|\cdots | \mu^{(r)})} d_{(\mu^{(1)}|\cdots | \mu^{(r)})} s_{(\mu^{(1)}|\cdots|\mu^{(r)})}(X_1^{(n_1)}|\cdots |X_{r}^{(n_r)}).\\
    \end{align*}

   Since each $n_i$ is sufficiently large, we know that 
$$ M_{(\mu^{(1)}|\cdots | \mu^{(r)})}^{(\lambda^{(1)}|\cdots | \lambda^{(r)})} = d_{(\mu^{(1)}|\cdots | \mu^{(r)})} =   \dim \Hom_{L(n_1|\ldots|n_r)}(\mathcal{D}^{(n_1|\ldots|n_r)}_{(\mu_1|\cdots|\mu_r)}, \mathcal{W}_{(\lambda^{(1)}|\cdots| \lambda^{(r)})}^{(n_1|\ldots|n_r)} ).$$ The result follows. 
\end{proof}

\begin{remark}\label{LR remark}
    Theorem \ref{Schur positivity theorem} leads to two questions. First, is there a simple combinatorial rule to determine when $M_{(\mu^{(1)}|\cdots | \mu^{(r)})}^{(\lambda^{(1)}|\cdots | \lambda^{(r)})} \neq 0?$ Second, is there an explicit (positive) combinatorial formula for the $M_{(\mu^{(1)}|\cdots | \mu^{(r)})}^{(\lambda^{(1)}|\cdots | \lambda^{(r)})}$ coefficients? The second question has an answer due to Ross--Yong. Using Theorem 1.1 of \cite{RY15}, which involves the \textbf{Edelman--Greene correspondence}, we see that the coefficients $M_{(\mu^{(1)}|\cdots | \mu^{(r)})}^{(\lambda^{(1)}|\cdots | \lambda^{(r)})}$ are counted by the number of sequences of strictly increasing tableaux $(T_1,\dots,T_r)$ with shapes $\mu^{(1)},\dots, \mu^{(r)}$ respectively satisfying a few additional constraints which we will not describe in detail here. Explicitly in terms of the notation of Ross--Yong, $M_{(\mu^{(1)}|\cdots | \mu^{(r)})}^{(\lambda^{(1)}|\cdots | \lambda^{(r)})} = \mathcal{E}^{\alpha}_{\mu^{(1)},\dots, \mu^{(r)}}$ where 
    $\alpha:= (0^{N-\ell(\lambda^{(1)})}*\rev(\lambda^{(1)}))*\cdots * (0^{N-\ell(\lambda^{(r)})}*\rev(\lambda^{(r)}))$ and $N:= |\lambda^{(1)}|+\dots + |\lambda^{(r)}|.$ 
\end{remark}

\section{Plethystic Recurrence Relation}

In this final section, we will prove an interesting plethystic identity (Corollary \ref{plethysm formula for multi-sym Schurs}) for the multi-symmetric Schur functions relating to the Bernstein operators. 

\subsection{Set-up}

\begin{defn}
For the remainder of the paper, the variable $z$ will denote a free variable independent of any other variable sets. We will write $\langle z^a \rangle \sum_{n \in \mathbb{Z}} c_n z^n:= c_a$ for any formal series in $z.$ 
\end{defn}

\begin{defn}
 To simplify notation, for $1\leq i \leq r$ and $1\leq j \leq n_{i}-1$ we will write $$\xi^{(i)}_{j}: \mathbb{Q}[x_{1,1},\ldots,x_{1,n_1},\ldots,x_{r,1},\ldots,x_{r,n_r}] \rightarrow \mathbb{Q}[x_{1,1},\ldots,x_{1,n_1},\ldots,x_{r,1},\ldots,x_{r,n_r}]$$ for the divided difference operator $\xi^{(i)}_j:= \xi_{x_{i,j},x_{i,j+1}}.$ 
\end{defn}

The next lemma describes how to use isobaric divided difference operators to move parts of partitions in multi-symmetric Schur polynomials. 

\begin{lem}
\begin{align*}
    &\langle z^0 \rangle\xi^{(i+1)}_{n_{i+1}-1}\cdots \xi^{(i+1)}_{1} \xi_{z,x_{i+1,1}}\left( \cS_{(\lambda^{(1)}|\cdots |\lambda^{(r)})}^{(n_1|\cdots|n_r)} (X_1^{(n_1)}|\cdots| X_{i}^{(n_i)}+z|\cdots |X_r^{(n_r)}) \right)\\
    &= \cS_{(\lambda^{(1)}|\cdots|\lambda^{(i)}\setminus \{\lambda^{(i)}_{1}\}|\lambda^{(i+1)} \cup \{\lambda^{(i)}_{1}\}|\cdots |\lambda^{(r)})}^{(n_1|\cdots|n_r)} (X_1^{(n_1)}|\cdots|X_r^{(n_r)})\\
\end{align*}
   
\end{lem}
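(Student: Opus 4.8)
The plan is to reduce the whole computation to the Demazure recursion defining key polynomials and then invoke the two stability statements already available. Throughout set $p := n_1 + \cdots + n_i + 1$ for the position the auxiliary variable $z$ will occupy, and recall that by Definition \ref{multi-symmetric Schur poly defn} each $\cS^{(n_1|\cdots|n_r)}_{(\lambda^{(1)}|\cdots|\lambda^{(r)})}$ is a single key polynomial $\cK_\alpha$ that is symmetric in each block of variables. The first step is to identify the input: since $\cS^{(n_1|\cdots|n_r)}_{(\lambda^{(1)}|\cdots|\lambda^{(r)})}$ is the finite-alphabet restriction of the multi-symmetric function $\cS_{(\lambda^{(1)}|\cdots|\lambda^{(r)})}$, feeding the $i$-th alphabet the extra variable $z$ simply increases $n_i$ by one. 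Using the block-$i$ symmetry together with Lemma \ref{mutlisym stability lemma}, I will argue that $\cS^{(n_1|\cdots|n_r)}_{(\lambda^{(1)}|\cdots|\lambda^{(r)})}(X_1^{(n_1)}|\cdots|X_i^{(n_i)}+z|\cdots|X_r^{(n_r)})$ equals $\cK_\gamma$, where $\gamma$ indexes $\cS^{(n_1|\cdots|n_i+1|\cdots|n_r)}_{(\lambda^{(1)}|\cdots|\lambda^{(r)})}$ with the extra variable placed at position $p$, the end of the $i$-th block. In particular $\gamma_p = \lambda^{(i)}_1$ and $\gamma_{p+1}=0$, the latter because block $i+1$ begins with at least one zero once $n_{i+1} \geq \ell(\lambda^{(i+1)})+1$ (exactly the condition making the target multi-partition admissible).

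The second step is to run the operators. In the global indexing the displayed product is $\xi_{p+n_{i+1}-1}\cdots\xi_{p+1}\xi_p$, applied with $\xi_p$ acting first. I will track the composition through the second defining property of key polynomials: whenever the entry at a position strictly exceeds its right neighbour, $\xi$ performs the swap ($\cK_{s_j\beta}=\xi_j\cK_\beta$), and whenever it is weakly smaller, $\xi$ fixes $\cK$. Since positions $p+1,\dots,p+n_{i+1}$ are weakly increasing and $\gamma_p=\lambda^{(i)}_1$, the value $\lambda^{(i)}_1$ bubbles rightward past every strictly smaller entry and halts at its sorted location, with all later operators acting trivially. Thus the product sends $\cK_\gamma$ to $\cK_\delta$, where $\delta$ agrees with $\gamma$ except that the window in positions $p,\dots,p+n_{i+1}$ is sorted into weakly increasing order, i.e. from $(\lambda^{(i)}_1,0^{\,n_{i+1}-\ell(\lambda^{(i+1)})},\rev(\lambda^{(i+1)}))$ to $(0^{\,n_{i+1}-\ell(\lambda^{(i+1)})},\rev(\lambda^{(i+1)}\cup\{\lambda^{(i)}_1\}))$. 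The crucial bookkeeping is that the $n_{i+1}$ operators are exactly enough to carry $\lambda^{(i)}_1$ to the end of block $i+1$ if needed, and that $\delta_p=0$.

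The third step is to extract the $z^0$ term. Since $\cK_\delta$ is a genuine polynomial in $z=x_p$, the operation $\langle z^0\rangle$ is evaluation at $x_p=0$, and because $\delta_p=0$ I can delete this coordinate using Lemma \ref{stability lemma}. The resulting $\cK_{\delta'}$ has block $i$ equal to $(0^{\,n_i-\ell(\lambda^{(i)})+1},\rev(\lambda^{(i)}\setminus\{\lambda^{(i)}_1\}))$ and block $i+1$ equal to $(0^{\,n_{i+1}-\ell(\lambda^{(i+1)})-1},\rev(\lambda^{(i+1)}\cup\{\lambda^{(i)}_1\}))$, with all other blocks untouched. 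Comparing with Definition \ref{multi-symmetric Schur poly defn}, this is precisely $\cS^{(n_1|\cdots|n_r)}_{(\lambda^{(1)}|\cdots|\lambda^{(i)}\setminus\{\lambda^{(i)}_1\}|\lambda^{(i+1)}\cup\{\lambda^{(i)}_1\}|\cdots|\lambda^{(r)})}$, as claimed.

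I expect the genuinely delicate points to be the first step and part of the second. In the first step one must cleanly justify that augmenting the $i$-th alphabet by $z$ reproduces the $(n_i+1)$-variable key polynomial with $z$ in the chosen slot — a compatibility statement that has to be phrased so that the Demazure recursion is literally applicable afterward. In the second step one must verify that every operator either performs the intended adjacent swap or acts as the identity, with no intermediate entry forcing a premature identity before $\lambda^{(i)}_1$ reaches its sorted position; this rests on block $i+1$ being weakly increasing, so that once a weak ascent is encountered it persists. The third step is immediate from Lemma \ref{stability lemma}.
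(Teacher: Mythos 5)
Your proposal is correct and follows essentially the same route as the paper: rewrite the input as the key polynomial indexed by the concatenated composition with $z$ occupying the last slot of block $i$ (so that slot carries $\lambda^{(i)}_1$), apply the Demazure recursion to bubble that entry into its sorted position in block $i+1$, and then set $z=0$ and invoke stability (Lemma \ref{stability lemma} / Lemma \ref{mutlisym stability lemma}) to delete the resulting zero coordinate. The paper simply asserts the middle step ``directly from the key polynomial recurrence relations,'' whereas you spell out the bubbling and the identity-action of the remaining operators, which is a faithful elaboration of the same argument.
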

\begin{proof}
    Directly from the key polynomial recurrence relations we find that
    \begin{align*}
        &\xi^{(i+1)}_{n_{i+1}-1}\cdots \xi^{(i+1)}_{1} \xi_{z,x_{i+1,1}}\left( \cS_{(\lambda^{(1)}|\cdots |\lambda^{(r)})}^{(n_1|\cdots|n_r)} (X_1^{(n_1)}|\cdots| X_{i}^{(n_i)}+z|\cdots |X_r^{(n_r)}) \right)= \\
        & \xi^{(i+1)}_{n_{i+1}-1}\cdots \xi^{(i+1)}_{1} \xi_{z,x_{i+1,1}} \\
        & \left( \cK_{\cdots * (0^{n_{i}+1-\ell(\lambda^{(i)})}*\rev(\lambda^{(i)}))*(0^{n_{i+1}-\ell(\lambda^{(i+1)})}*\rev(\lambda^{(i+1)}))*\cdots}(\ldots,x_{i,1},\ldots,x_{i,n_{i}},z,x_{i+1,1},\ldots, x_{i+1,n_{i+1}},\ldots) \right) \\
        &= \cK_{\cdots * (0^{n_{i}-\ell(\lambda^{(i)}\setminus \{\lambda^{(i)}_{1}\})}*\rev(\lambda^{(i)}\setminus \{\lambda^{(i)}_{1}\})*(0^{n_{i+1}+1-\ell(\lambda^{(i+1)} \cup \{\lambda^{(i)}_{1}\})}*\rev(\lambda^{(i+1)} \cup \{\lambda^{(i)}_{1}\}))*\cdots}(\ldots,z,x_{i+1,1},\ldots, x_{i+1,n_{i+1}},\ldots) \\
        &= \cS_{(\lambda^{(1)}|\cdots|\lambda^{(i)}\setminus \{\lambda^{(i)}_{1}\}|\lambda^{(i+1)} \cup \{\lambda^{(i)}_{1}\}|\cdots |\lambda^{(r)})}^{(n_1|\cdots|n_{i+1}+1|\cdots |n_r)} (X_1^{(n_1)}|\cdots|X_{i+1}^{(n_{i+1})}+z|\cdots |X_r^{(n_r)}).\\
    \end{align*}

Now by evaluating at $z = 0$ (or equivalently extracting the $z^0$ coefficient) we see from Lemma \ref{mutlisym stability lemma} that

\begin{align*}
    &\langle z^0 \rangle \cS_{(\lambda^{(1)}|\cdots|\lambda^{(i)}\setminus \{\lambda^{(i)}_{1}\}|\lambda^{(i+1)} \cup \{\lambda^{(i)}_{1}\}|\cdots |\lambda^{(r)})}^{(n_1|\cdots|n_{i+1}+1|\cdots |n_r)} (X_1^{(n_1)}|\cdots|X_{i+1}^{(n_{i+1})}+z|\cdots |X_r^{(n_r)}) \\
    &= \cS_{(\lambda^{(1)}|\cdots|\lambda^{(i)}\setminus \{\lambda^{(i)}_{1}\}|\lambda^{(i+1)} \cup \{\lambda^{(i)}_{1}\}|\cdots |\lambda^{(r)})}^{(n_1|\cdots|n_{i+1}|\cdots |n_r)} (X_1^{(n_1)}|\cdots|X_{i+1}^{(n_{i+1})}|\cdots |X_r^{(n_r)}).\\
\end{align*}

\end{proof}

\begin{defn}\label{defn of finite var Ds}
    Define the operators $D_1^{(n_1|\cdots|n_r)},\ldots,D_{r-1}^{(n_{1}|\cdots | n_r)}: \Lambda(X_{1}^{(n_1)}|\cdots|X_{r}^{(n_r)}) \rightarrow \Lambda(X_{1}^{(n_1)}|\cdots|X_{r}^{(n_r)})$ by 
    $$D_{i}^{(n_1|\cdots|n_r)}(f):= \langle z^0 \rangle\xi^{(i+1)}_{n_{i+1}-1}\cdots \xi^{(i+1)}_{1} \xi_{z,x_{i+1,1}}\left( f (X_1^{(n_1)}|\cdots| X_{i}^{(n_i)}+z|\cdots |X_r^{(n_r)}) \right)$$
\end{defn}

The above operators satisfy the following stability property.

\begin{prop}
    For all $(n_1|\cdots|n_r)$, $1\leq i \leq r-1$ and $1 \leq j \leq r,$
    $$\Xi_{j}^{(n_j)}D_{i}^{(n_1|\cdots|n_r)}  = D_i^{(n_1|\cdots|n_j-1|\cdots|n_r)}\Xi_{j}^{(n_j)}.$$ 
\end{prop}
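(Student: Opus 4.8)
The plan is to prove the identity at the level of linear operators on the polynomial rings $\cP_{(n_1|\cdots|n_r)}$, by factoring $D_i^{(n_1|\cdots|n_r)}$ into three elementary pieces and commuting $\Xi_j^{(n_j)}$ past each in turn. Write $\tau^{(i)}_z$ for the substitution $f \mapsto f(X_1^{(n_1)}|\cdots|X_i^{(n_i)}+z|\cdots|X_r^{(n_r)})$, write $C := \xi^{(i+1)}_{n_{i+1}-1}\cdots\xi^{(i+1)}_{1}\,\xi_{z,x_{i+1,1}}$ for the divided-difference cascade, and write $\pi := \langle z^0\rangle$ for the coefficient extraction, so that $D_i^{(n_1|\cdots|n_r)} = \pi\,C\,\tau^{(i)}_z$. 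Since $\Xi_j^{(n_j)}$ involves neither $z$ nor any operator on the $z$-variable, it commutes with $\pi$ at once; this reduces the problem to understanding how $\Xi_j^{(n_j)}$ interacts with $C$ and with $\tau^{(i)}_z$.

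The key technical input is a companion to the identity $\varphi_i\xi_i = s_i\varphi_{i+1}$ used in the proof of Lemma \ref{stability lemma}. Writing $\varphi_y$ for the operator $g\mapsto g|_{y=0}$, a direct computation from the definition of $\xi_{x,y}$ gives the \emph{second-variable} specialization identity $\varphi_y\xi_{x,y} = \varphi_y$: setting $y=0$ in $\xi_{x,y}(g) = \frac{x\,g(x,y)-y\,g(y,x)}{x-y}$ yields $g(x,0)$. I would record this identity first, as it is what drives the hardest case.

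I would then split into three cases according to the position of $j$ relative to $i$. When $j \notin \{i,i+1\}$, the operator $\Xi_j^{(n_j)}$ touches only the variables $x_{j,\bullet}$, which appear neither in $\tau^{(i)}_z$ (acting on set $i$) nor in $C$ (acting on set $i+1$ and $z$); hence $\Xi_j^{(n_j)}$ commutes with both, and the identity is immediate. When $j=i$, the only nontrivial point is $\Xi_i^{(n_i)}\tau^{(i)}_z = \tau^{(i)}_z\Xi_i^{(n_i)}$; since $\tau^{(i)}_z$ merely adjoins $z$ as an extra variable of the symmetric set $X_i$, this follows from the stability of symmetric functions under specialization (adjoining $z$ and setting $x_{i,n_i}=0$ are independent operations on $\Lambda(X_i)$), and as $\Xi_i^{(n_i)}$ visibly commutes with $C$, the claim follows.

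The main obstacle is the case $j=i+1$, where $\Xi_{i+1}^{(n_{i+1})}$ sets the \emph{last} variable $x_{i+1,n_{i+1}}$ of the cascade's variable set to $0$. Here I would observe that the only operator in $C$ involving $x_{i+1,n_{i+1}}$ is the topmost one, $\xi^{(i+1)}_{n_{i+1}-1} = \xi_{x_{i+1,n_{i+1}-1},\,x_{i+1,n_{i+1}}}$, in which $x_{i+1,n_{i+1}}$ is precisely the subtracted (second) variable. The second-variable identity absorbs this operator, $\Xi_{i+1}^{(n_{i+1})}\xi^{(i+1)}_{n_{i+1}-1} = \Xi_{i+1}^{(n_{i+1})}$, after which $\Xi_{i+1}^{(n_{i+1})}$ commutes freely past the remaining operators $\xi^{(i+1)}_{n_{i+1}-2}\cdots\xi^{(i+1)}_{1}\,\xi_{z,x_{i+1,1}}$, none of which involve $x_{i+1,n_{i+1}}$. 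This yields exactly $\Xi_{i+1}^{(n_{i+1})}C = C'\,\Xi_{i+1}^{(n_{i+1})}$, where $C' = \xi^{(i+1)}_{n_{i+1}-2}\cdots\xi^{(i+1)}_{1}\,\xi_{z,x_{i+1,1}}$ is the cascade of the reduced system defining $D_i^{(n_1|\cdots|n_{i+1}-1|\cdots|n_r)}$. Combining the three cases with the already-noted commutations of $\Xi_j^{(n_j)}$ past $\tau^{(i)}_z$ and $\pi$ gives the proposition. I expect the only points needing care to be the symmetric-function stability step when $j=i$ and the cascade-length bookkeeping when $j=i+1$; the alternative of verifying the identity on the basis $\{\cS^{(n_1|\cdots|n_r)}_{(\lambda^{(1)}|\cdots|\lambda^{(r)})}\}$ via the preceding lemma together with Lemma \ref{mutlisym stability lemma} is tempting, but it runs into boundary subtleties when $\ell(\lambda^{(j)})=n_j$ (where Lemma \ref{mutlisym stability lemma} does not apply), which the operator-level argument sidesteps entirely.
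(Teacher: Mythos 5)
Your proof is correct, but it takes a genuinely different route from the paper. The paper proves this proposition by forward reference: it first establishes Lemma \ref{Ds into Ws lemma}, which rewrites $D_i^{(n_1|\cdots|n_r)}$ as $\langle z^0\rangle W_{z,x_{i+1,1},\ldots,x_{i+1,n_{i+1}}}(\,\cdot\,(X_i^{(n_i)}+z))$, and then invokes the stability of the Weyl symmetrizer under setting the last variable to zero. You instead stay at the level of the raw divided-difference cascade and absorb the single offending factor $\xi^{(i+1)}_{n_{i+1}-1}$ via the identity $\varphi_y\xi_{x,y}=\varphi_y$, which is the correct second-variable companion to the $\varphi_i\xi_i=s_i\varphi_{i+1}$ identity from Lemma \ref{stability lemma} (setting $y=0$ in $\frac{xf(x,y)-yf(y,x)}{x-y}$ indeed returns $f(x,0)$), and your three-case analysis in $j$ is complete: the only operator in the cascade touching $x_{i+1,n_{i+1}}$ is the topmost one, with $x_{i+1,n_{i+1}}$ in second position, so the cascade collapses to exactly the one defining $D_i^{(n_1|\cdots|n_{i+1}-1|\cdots|n_r)}$. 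What your approach buys is self-containedness — it removes the slightly awkward dependence on a lemma "proven independently later" and needs only one specialization identity rather than the full symmetrizer stability — while the paper's approach is essentially free once Lemma \ref{Ds into Ws lemma} is in hand, and that lemma is needed anyway for the plethystic formula in Theorem \ref{plethysm thm}. Your closing remark about why the basis-by-basis alternative is delicate (Lemma \ref{mutlisym stability lemma} requires $n_j\geq\ell(\lambda^{(j)})+1$) is a fair and accurate observation. The only point worth flagging is the degenerate boundary $j=i+1$, $n_{i+1}=1$, where the reduced cascade is empty; this is an edge case in the statement itself rather than a defect of your argument.
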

\begin{proof}
    This is a consequence of Lemma \ref{Ds into Ws lemma} proven independently later.
\end{proof}

Using the above stability result we may define the following operators on multi-symmetric functions:

\begin{defn}
    Define the operators $D_1,\ldots,D_{r-1}: \Lambda(X_1|\cdots|X_r)\rightarrow \Lambda(X_1|\cdots|X_r) $ by 
    $$D_i := \lim_{n_1,\ldots,n_r \rightarrow \infty} D_{i}^{(n_1|\cdots|n_r)}.$$
\end{defn}

Putting everything together yields:

\begin{prop}\label{recurrence prop}
    $$D_i\left(\cS_{(\lambda^{(1)}|\cdots |\lambda^{(r)})} \right) = \cS_{(\lambda^{(1)}|\cdots|\lambda^{(i)}\setminus \{\lambda^{(i)}_{1}\}|\lambda^{(i+1)} \cup \{\lambda^{(i)}_{1}\}|\cdots |\lambda^{(r)})}$$
\end{prop}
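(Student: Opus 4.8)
The plan is to reduce Proposition \ref{recurrence prop} to its finite-variable analogue and then pass to the stable limit. The finite-variable analogue is already essentially in hand: combining the Lemma immediately preceding Definition \ref{defn of finite var Ds} with that definition gives, for every $(n_1|\cdots|n_r)$ with $n_j \geq \ell(\lambda^{(j)})$ for all $j$ and $n_i \geq \ell(\lambda^{(i)})+1$, the identity
$$D_i^{(n_1|\cdots|n_r)}\bigl(\cS_{(\lambda^{(1)}|\cdots |\lambda^{(r)})}^{(n_1|\cdots|n_r)}\bigr) = \cS_{(\lambda^{(1)}|\cdots|\lambda^{(i)}\setminus \{\lambda^{(i)}_{1}\}|\lambda^{(i+1)} \cup \{\lambda^{(i)}_{1}\}|\cdots |\lambda^{(r)})}^{(n_1|\cdots|n_r)}.$$
So the remaining task is purely to check that both sides converge, as $n_1,\ldots,n_r \to \infty$, to the corresponding multi-symmetric Schur functions, and that the left-hand limit computes $D_i(\cS_{(\lambda^{(1)}|\cdots |\lambda^{(r)})})$.

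First I would record that, by Lemma \ref{mutlisym stability lemma} and Definition \ref{multi-sym Schur function defn}, the polynomials $\cS^{(n_1|\cdots|n_r)}_{(\nu^{(1)}|\cdots|\nu^{(r)})}$ form a stable family whose limit is $\cS_{(\nu^{(1)}|\cdots|\nu^{(r)})}$, for any fixed multi-partition $(\nu^{(1)}|\cdots|\nu^{(r)})$. Applying this to the shifted multi-partition $(\lambda^{(1)}|\cdots|\lambda^{(i)}\setminus\{\lambda^{(i)}_1\}|\lambda^{(i+1)}\cup\{\lambda^{(i)}_1\}|\cdots|\lambda^{(r)})$ identifies the stable limit of the right-hand side of the displayed identity with $\cS_{(\lambda^{(1)}|\cdots|\lambda^{(i)}\setminus\{\lambda^{(i)}_1\}|\lambda^{(i+1)}\cup\{\lambda^{(i)}_1\}|\cdots|\lambda^{(r)})}$, the desired target, while applying it to $(\lambda^{(1)}|\cdots|\lambda^{(r)})$ itself shows that the truncations of $\cS_{(\lambda^{(1)}|\cdots|\lambda^{(r)})}$ are precisely the $\cS^{(n_1|\cdots|n_r)}_{(\lambda^{(1)}|\cdots|\lambda^{(r)})}$.

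Next I would invoke the definition $D_i := \lim_{n_1,\ldots,n_r\to\infty} D_i^{(n_1|\cdots|n_r)}$, which is legitimate precisely because of the stability relation $\Xi_j^{(n_j)} D_i^{(n_1|\cdots|n_r)} = D_i^{(n_1|\cdots|n_j-1|\cdots|n_r)}\Xi_j^{(n_j)}$ established in the Proposition preceding this one (itself deduced from Lemma \ref{Ds into Ws lemma}). This compatibility guarantees that $D_i(f) = \lim_{n_1,\ldots,n_r\to\infty} D_i^{(n_1|\cdots|n_r)}(f^{(n_1|\cdots|n_r)})$ for any $f$ with truncations $f^{(n_1|\cdots|n_r)}$. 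Taking $f = \cS_{(\lambda^{(1)}|\cdots|\lambda^{(r)})}$ and substituting the finite-variable identity, the left-hand side stabilizes to $D_i(\cS_{(\lambda^{(1)}|\cdots|\lambda^{(r)})})$ while the right-hand side stabilizes to the shifted multi-symmetric Schur function, which is the claim.

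The only step carrying genuine content, as opposed to bookkeeping, is the justification that the family $(D_i^{(n_1|\cdots|n_r)})$ is compatible with the stabilization maps $\Xi_j^{(n_j)}$, so that the interchange of the two limits — the one defining $D_i$ and the one defining $\cS$ — is valid; this is the substance of the preceding stability Proposition and of Lemma \ref{Ds into Ws lemma}. I therefore expect the main obstacle to reside entirely in that deferred stability statement rather than in the present argument, which merely assembles the finite identity with the relevant definitions.
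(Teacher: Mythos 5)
Your proposal is correct and is essentially the paper's own argument: the paper proves the finite-variable identity in the lemma preceding Definition \ref{defn of finite var Ds}, establishes compatibility of the $D_i^{(n_1|\cdots|n_r)}$ with the restriction maps $\Xi_j^{(n_j)}$, and then states the proposition as ``putting everything together,'' which is precisely the assembly you describe. The only caveat is that the precise largeness hypotheses you attach to the finite-variable identity (involving $n_i$) should really also involve $n_{i+1}$, but this is immaterial once one passes to the stable limit.
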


\begin{remark}
    It is important to note it does \textit{not} follow from Proposition \ref{recurrence prop} that the operators $D_1,\ldots,D_{r-1}$ generate every $\cS_{(\lambda^{(1)}|\cdots | \lambda^{(r)})}$ from the ordinary Schur functions $\cS_{(\lambda|\emptyset|\cdots |\emptyset)} = s_{\lambda}(X_1)$. There is no way to generate $(2|3|1)$ from any $(\mu|\emptyset| \emptyset)$ simply by moving largest parts rightward. 
\end{remark}

\subsection{Weyl Symmetrizers and Bernstein Operators}

For the remainder of the paper, we will focus on finding an explicit plethystic expression for the action of the operators $D_i.$ First, we need to relate the operators $D_1,\ldots,D_{r-1}$ to the Weyl symmetrization operators.

\begin{defn}
    Given a commutative ring $R$ and an ordered set of free variables $(y_1,\ldots,y_n)$ define the \textbf{\textit{Weyl symmetrizer}} in the variables $(y_1,\ldots,y_n)$ to be the map $W_{y_1,\ldots,y_n}:R[y_1,\ldots,y_n] \rightarrow R[y_1,\ldots,y_n]^{\mathfrak{S}_n}$ given by 
    $$W_{y_1,\ldots,y_n}(f(y_1,\ldots,y_n)):= \sum_{\sigma \in \mathfrak{S}_n} \sigma \left( f(y_1,\ldots, y_n) \prod_{1 \leq i < j \leq n} \left( \frac{1}{1-\frac{y_j}{y_i}}\right) \right).$$
\end{defn}

The map $W_{y_1,\ldots,y_n}$ is in fact defined over $R$ since we have the alternative formula:

$$W_{y_1,\ldots,y_n}(f(y_1,\ldots,y_n)) = \frac{ \sum_{\sigma \in \mathfrak{S}_{n}} (-1)^{\ell(\sigma)} \sigma \left( y_1^{n-1}\cdots y_{n-1}^1 f(y_1,\ldots, y_n)  \right)}{\prod_{1\leq i< j \leq n} \left( y_i-y_j \right)}.$$

The following is a well known relationship between the Weyl symmetrizers and the isobaric divided difference operators.

\begin{lem}\label{Weyl sym is isobaric for longest word}
    $$W_{y_1,\ldots,y_n} = (\xi_{y_{n-1},y_n}\cdots \xi_{y_1,y_2})(\xi_{y_{n-1},y_n}\cdots \xi_{y_2,y_3})\cdots (\xi_{y_{n-1},y_n}\xi_{y_{n-2},y_{n-1}})\xi_{y_{n-1},y_n}.$$
\end{lem}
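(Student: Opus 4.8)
The plan is to recognize the right-hand side as the \emph{Demazure operator} attached to the longest element $w_0 \in \mathfrak{S}_n$ and then to match it against the antisymmetrizer formula for $W_{y_1,\ldots,y_n}$ recorded immediately above the lemma. First I would record the two structural properties of the operators $\xi_i := \xi_{y_i,y_{i+1}}$. A one-line check shows that $\xi_i(f)$ is always symmetric under swapping $y_i,y_{i+1}$, and that $\xi_i$ fixes any $f$ already symmetric in $y_i,y_{i+1}$ (since then $\xi_i(f) = (y_i f - y_{i+1}f)/(y_i-y_{i+1}) = f$); together these give $\xi_i^2 = \xi_i$. Combined with the braid relations $\xi_i\xi_{i+1}\xi_i = \xi_{i+1}\xi_i\xi_{i+1}$ and $\xi_i\xi_j = \xi_j\xi_i$ for $|i-j|\geq 2$ (the standard isobaric/Demazure relations), it follows that for a reduced word $w = s_{i_1}\cdots s_{i_k}$ the composite $\xi_w := \xi_{i_1}\cdots\xi_{i_k}$ depends only on $w$, not on the chosen reduced word.

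Next I would identify the displayed product as $\xi_{w_0}$. In terms of simple reflections it is the word $(s_{n-1}\cdots s_1)(s_{n-1}\cdots s_2)\cdots(s_{n-1}s_{n-2})(s_{n-1})$, of length $\binom{n}{2} = \ell(w_0)$. To see it is reduced and equals $w_0$, I would use the factorization $w_0 = c\,w_0'$, where $c = s_{n-1}s_{n-2}\cdots s_1$ is the long cycle with one-line notation $[n,1,2,\ldots,n-1]$ (so $\ell(c)=n-1$) and $w_0'$ is the longest element of the parabolic subgroup $\mathfrak{S}_{\{2,\ldots,n\}}$. A direct check gives $c\,w_0' = w_0$ and $\ell(c)+\ell(w_0') = (n-1)+\binom{n-1}{2} = \binom{n}{2}$, so the lengths add and the concatenation is reduced; since the trailing blocks are exactly the corresponding staircase word for $w_0'$ inside $\mathfrak{S}_{\{2,\ldots,n\}}$, reducedness propagates by induction on $n$. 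Hence the product is $\xi_{w_0}$.

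Finally I would prove $\xi_{w_0} = W_{y_1,\ldots,y_n}$. The cleanest route is to show both sides agree with the operator $f \mapsto \big(\sum_{\sigma\in\mathfrak{S}_n}(-1)^{\ell(\sigma)}\sigma(y^\rho f)\big)\big/\prod_{i<j}(y_i-y_j)$, where $\rho = (n-1,\ldots,1,0)$ and $y^\rho = y_1^{n-1}\cdots y_{n-1}$; for $W_{y_1,\ldots,y_n}$ this is precisely the alternative formula recorded just before the lemma, so it remains to establish this closed form for $\xi_{w_0}$. I would do this by induction on $n$: using the factorization $\xi_{w_0} = (\xi_{n-1}\cdots\xi_1)\circ \xi_{w_0'}$ together with the inductive hypothesis $\xi_{w_0'} = W_{y_2,\ldots,y_n}$, the claim reduces to the single merging identity $(\xi_{n-1}\cdots\xi_1)\circ W_{y_2,\ldots,y_n} = W_{y_1,\ldots,y_n}$.

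The main obstacle is this merging identity, i.e.\ showing that composing the cycle operator $\xi_{n-1}\cdots\xi_1$ with the $(n-1)$-variable symmetrizer in $y_2,\ldots,y_n$ produces the full $n$-variable symmetrizer. I expect to prove it by a direct computation with the antisymmetrizer formula, writing $y^\rho = y_1^{n-1}y^{\rho'}$ and $\prod_{i<j}(y_i-y_j) = \big(\prod_{j\geq 2}(y_1-y_j)\big)\prod_{2\le i<j}(y_i-y_j)$, decomposing $\mathfrak{S}_n$ into the $n$ cosets of $\mathfrak{S}_{\{2,\ldots,n\}}$ indexed by the image of $1$, and verifying that $\xi_{n-1}\cdots\xi_1$ implements exactly the sum over coset representatives. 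Everything else—idempotency, the braid relations, and the reduced-word bookkeeping—is routine.
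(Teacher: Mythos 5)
Your proposal is correct. Note that the paper offers no proof of this lemma at all---it is recorded as a ``well known relationship''---so there is no argument of the author's to diverge from; what you give is the standard textbook proof, and it would serve as a complete justification. All the ingredients check out: $\xi_i^2=\xi_i$ and the braid relations make $\xi_w$ well defined on reduced words, the displayed product is the staircase reduced word for $w_0$ via the factorization $w_0 = (s_{n-1}\cdots s_1)\,w_0'$ with additive lengths, and the whole lemma then rests on the single merging identity $(\xi_{y_{n-1},y_n}\cdots\xi_{y_1,y_2})\circ W_{y_2,\ldots,y_n} = W_{y_1,\ldots,y_n}$ (over the base ring $R[y_1]$), which you have correctly isolated as the only step requiring computation. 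Your plan for that step---splitting the antisymmetrizer over the cosets of $\mathrm{Stab}(1)$ and matching the coset sum with $\xi_{y_{n-1},y_n}\cdots\xi_{y_1,y_2}$, whose action is $h\mapsto\sum_{k}\tau_k\bigl(y_1^{n-1}h/\prod_{j\geq 2}(y_1-y_j)\bigr)$ for the cycles $\tau_k$ sending $1\mapsto k$---is exactly how the identity is verified, and the signs $(-1)^{k-1}=(-1)^{\ell(\tau_k)}$ are absorbed by $\tau_k$ acting on the Vandermonde factor. The base case $n=2$ reduces to $W_{y_1,y_2}=\xi_{y_1,y_2}$, which is immediate from the alternative formula recorded before the lemma. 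One cosmetic remark: idempotency of $\xi_i$ is not actually needed for well-definedness on reduced words (Matsumoto plus the braid relations suffice), though it costs nothing to record it.
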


The Weyl symmetrizers satisfy a useful stability property.

\begin{lem}
    For all $f \in R[y_1,\ldots,y_n],$
    $$W_{y_1,\ldots,y_n}(f(y_1,\ldots,y_{n-1},y_n))|_{y_n=0} = W_{y_1,\ldots,y_{n-1}}(f(y_1,\ldots,y_{n-1},0)).$$
\end{lem}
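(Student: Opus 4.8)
The plan is to reduce everything to the operator factorization of Lemma \ref{Weyl sym is isobaric for longest word} and to commute the evaluation $y_n=0$ through the resulting product of isobaric operators, exactly in the spirit of the proof of Lemma \ref{stability lemma}. Write $\varphi_n(g):=g|_{y_n=0}$, regarded as a map $R[y_1,\ldots,y_n]\to R[y_1,\ldots,y_{n-1}]$. The statement to prove is then the operator identity $\varphi_n\circ W_{y_1,\ldots,y_n}=W_{y_1,\ldots,y_{n-1}}\circ\varphi_n$, since applying the left side to $f$ gives $W_{y_1,\ldots,y_n}(f)|_{y_n=0}$ and the right side gives $W_{y_1,\ldots,y_{n-1}}(f(y_1,\ldots,y_{n-1},0))$.

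First I would record two elementary commutation facts. For $1\le j\le n-2$ the operator $\xi_{y_j,y_{j+1}}$ does not involve $y_n$, so $\varphi_n\xi_{y_j,y_{j+1}}=\xi_{y_j,y_{j+1}}\varphi_n$. The crucial identity is the absorption $\varphi_n\xi_{y_{n-1},y_n}=\varphi_n$: applying $\xi_{y_{n-1},y_n}$ and then setting $y_n=0$ produces $\tfrac{y_{n-1}f(\ldots,y_{n-1},0)-0}{y_{n-1}}=f(\ldots,y_{n-1},0)$, because the exchanged term carries a factor of $y_n$ that vanishes. This is the one genuinely non-formal step and is where the asymmetry between the two sides of the claim comes from.

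Next I would expand $W_{y_1,\ldots,y_n}$ via Lemma \ref{Weyl sym is isobaric for longest word} as the product of blocks $B_k=\xi_{y_{n-1},y_n}\xi_{y_{n-2},y_{n-1}}\cdots\xi_{y_k,y_{k+1}}$ for $k=1,\ldots,n-1$ (with $B_{n-1}=\xi_{y_{n-1},y_n}$), and push $\varphi_n$ through from the left one block at a time. In each block the leftmost factor is $\xi_{y_{n-1},y_n}$, which is absorbed by the absorption identity, while every remaining factor has index $\le n-2$ and so commutes past $\varphi_n$. Hence $\varphi_nB_k=B_k'\varphi_n$ with $B_k'=\xi_{y_{n-2},y_{n-1}}\cdots\xi_{y_k,y_{k+1}}$ for $k\le n-2$ and $B_{n-1}'=\mathrm{id}$. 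Collecting the survivors gives $\varphi_n W_{y_1,\ldots,y_n}=(B_1'\cdots B_{n-2}')\,\varphi_n$, and $B_1'\cdots B_{n-2}'$ is precisely the factorization of $W_{y_1,\ldots,y_{n-1}}$ furnished by Lemma \ref{Weyl sym is isobaric for longest word} in the variables $y_1,\ldots,y_{n-1}$, yielding the claim.

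The only real obstacle is verifying the absorption identity correctly and then keeping the bookkeeping tight enough that the surviving product is exactly $W_{y_1,\ldots,y_{n-1}}$ rather than some other reduced word; both become routine once the block structure is laid out as above. An alternative, messier route would be to work directly from the alternant formula for $W$, but there both numerator and denominator acquire factors $\prod_{i<n}(y_i-y_n)\to\prod_{i<n}y_i$ upon setting $y_n=0$, which must be cancelled by hand; the operator argument sidesteps this entirely.
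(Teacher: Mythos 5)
Your argument is correct, and it is a genuinely different route from the paper's: the paper does not prove this lemma in-text but instead defers to a direct computational proof in a cited reference (working, presumably, from the symmetrization/alternant formula). Your proof is self-contained within the paper's toolkit: it combines the factorization of Lemma \ref{Weyl sym is isobaric for longest word} with the same commute-the-evaluation strategy used in the proof of Lemma \ref{stability lemma}. The two key identities check out — $\varphi_n$ commutes with $\xi_{y_j,y_{j+1}}$ for $j\leq n-2$, and the absorption $\varphi_n\xi_{y_{n-1},y_n}=\varphi_n$ holds because the swapped term in the numerator carries the factor $y_n$ while the denominator becomes $y_{n-1}\neq 0$, so evaluation at $y_n=0$ is legitimate on the polynomial $\xi_{y_{n-1},y_n}(f)$. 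The bookkeeping is also right: each block $B_k=\xi_{y_{n-1},y_n}\xi_{y_{n-2},y_{n-1}}\cdots\xi_{y_k,y_{k+1}}$ loses exactly its leftmost factor, and the surviving product $B_1'\cdots B_{n-2}'$ is precisely the factorization of $W_{y_1,\ldots,y_{n-1}}$ given by Lemma \ref{Weyl sym is isobaric for longest word} in one fewer variable. What your approach buys is a proof that avoids the by-hand cancellation of the factors $\prod_{i<n}(y_i-y_n)$ in numerator and denominator that a direct computation with the alternant formula would require; what the computational route buys is independence from the reduced-word factorization. Either is acceptable; yours has the advantage of keeping the paper self-contained.
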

\begin{proof}
    This is a standard result. For a direct computational proof see Lemma 2.8.3 in \cite{BWThesis}.
\end{proof}

From the classical theory of symmetric functions we have the following important family of maps which serve as creation operators for the Schur functions.

\begin{defn}
    Let $R$ be a commutative ring and let $Y=y_1+y_2+\ldots$ be an infinite set of free variables. For $a \in\mathbb{Z}$, the \textbf{\textit{Bernstein operator}} is the map $\cB_a:\Lambda_R(Y) \rightarrow \Lambda_R(Y)$ given by 
    $$\cB_a(f)(Y):= \langle z^a \rangle f(Y-z^{-1})\Omega(zY).$$ 
\end{defn}

The Bernstein operators are directly related to the Weyl symmetrizers.

\begin{prop}\label{Weyls limit to Bernstein prop}
For all $f(Y) \in \Lambda_{R}(Y),$
    $$\lim_{n \rightarrow \infty} \langle z^0 \rangle W_{z,y_1,y_2,\ldots,y_n}(z^af(y_1,\ldots,y_n)) = \cB_a(f)(Y) = \langle z^0 \rangle z^a f(Y-z^{-1}) \Omega(zY).$$
\end{prop}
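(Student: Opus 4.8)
The plan is to work in finitely many variables throughout: I would compute $\langle z^0\rangle W_{z,y_1,\ldots,y_n}(z^a f(y_1,\ldots,y_n))$ exactly as a symmetric function of $Y_n:=y_1+\cdots+y_n$, recognize it as the finite-variable analogue of the Bernstein operator via a residue computation, and only then send $n\to\infty$. Write $w_0=z,w_1=y_1,\ldots,w_n=y_n$. The first step is to establish the compact ``Weyl-character-formula'' identity
\[
W_{z,y_1,\ldots,y_n}(z^a f)=\sum_{k=0}^{n}\frac{w_k^{a}\,f\big(w_0,\ldots,\widehat{w_k},\ldots,w_n\big)}{\prod_{i\neq k}\big(1-w_i/w_k\big)},
\]
valid as an identity of symmetric polynomials. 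This follows by decomposing $\mathfrak{S}_{n+1}$ into the $n+1$ cosets of the subgroup $\mathfrak{S}_{n}$ fixing the $z$-slot, indexed by $k=\sigma(0)$: since $z^a$, $f$, and the factors $\prod_{j\ge 1}(1-y_j/z)$ are symmetric in $y_1,\ldots,y_n$, summing over the residual $\mathfrak{S}_n$ and using $W_{y_1,\ldots,y_n}(1)=1$ (immediate from the alternant formula for $W$ recorded above) collapses each coset to a single summand.

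Because $z^a f$ is a polynomial and $W$ preserves polynomials, $W_{z,y_1,\ldots,y_n}(z^a f)$ is an honest polynomial in $z$, so $\langle z^0\rangle$ is the same as setting $z=0$ in the displayed rational expression. This must be done on the full sum rather than term by term, since the individual summands are Laurent expansions valid in mutually incompatible ranges of $z$. After clearing denominators the $k=0$ summand carries a factor $z^{a+n}$ and hence vanishes at $z=0$, leaving
\[
\langle z^0\rangle W_{z,y_1,\ldots,y_n}(z^a f)=\sum_{k=1}^{n}\frac{y_k^{a}\,f(y_1,\ldots,\widehat{y_k},\ldots,y_n)}{\prod_{j\neq k}\big(1-y_j/y_k\big)}.
\]
Using $f(y_1,\ldots,\widehat{y_k},\ldots,y_n)=f(Y_n-y_k)$, a short computation shows each summand is $\operatorname{Res}_{z=y_k}G$, where $G(z):=z^{a-1}f(Y_n-z)\prod_{j=1}^{n}(1-y_j/z)^{-1}$, so the right-hand side equals $\sum_{k=1}^{n}\operatorname{Res}_{z=y_k}G$.

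The key step is then to apply the residue theorem on $\mathbb{P}^1$ to $G$. Since $\prod_{j}(1-y_j/z)^{-1}$ has a zero of order $n$ at $z=0$, the function $G$ is regular there and $\operatorname{Res}_{z=0}G=0$; hence $\sum_{k}\operatorname{Res}_{z=y_k}G=-\operatorname{Res}_{z=\infty}G$, which is the coefficient of $z^{-1}$ in the expansion of $G$ at infinity. There $\prod_{j}(1-y_j/z)^{-1}=\Omega(Y_n/z)$, so this coefficient equals $\langle z^0\rangle\, z^{a}f(Y_n-z)\Omega(Y_n/z)$; substituting $z\mapsto z^{-1}$ (which fixes $\langle z^0\rangle$) turns it into $\langle z^{a}\rangle f(Y_n-z^{-1})\Omega(zY_n)$, the finite-variable Bernstein operator. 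Passing to the limit $n\to\infty$, using that the complete homogeneous functions $h_m(Y_n)$ stabilize (and the stability of the Weyl symmetrizers recorded in the preceding lemma), yields $\cB_a(f)(Y)$.

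I expect the main obstacle to be the coefficient-extraction bookkeeping rather than any single hard estimate: the summands of the compact formula are one-sided Laurent expansions in incompatible ranges, so $\langle z^0\rangle$ cannot be distributed across them, and it is precisely the residue theorem that correctly reassembles the one-sided series $\Omega(zY)$ together with the plethystic subtraction $f(Y-z^{-1})$ into the two-sided Laurent series defining $\cB_a$. Secondary points to verify are that the $y_j$ may be taken pairwise distinct (so the poles of $G$ are simple and the residue identity holds, then extending to all $y$ by continuity) and that the resulting limit is exactly the one used to define $\cB_a$.
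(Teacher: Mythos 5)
Your argument is correct, and it is genuinely different from what the paper does: the paper's ``proof'' of Proposition \ref{Weyls limit to Bernstein prop} is a one-line citation (setting $t=0$ in Lemma 2.14 of \cite{BWAlmostSymSchur}), whereas you give a complete, self-contained derivation. Your three key steps all check out: the coset expansion of $W_{z,y_1,\ldots,y_n}$ over $\mathfrak{S}_{n+1}/\mathfrak{S}_n$ (using that $z^a f\prod_{j}(1-y_j/z)^{-1}$ is $\mathfrak{S}_n$-invariant and $W_{y_1,\ldots,y_n}(1)=1$), the vanishing of the $k=0$ term at $z=0$ because it carries $z^{a+n}$ after clearing denominators, and the reassembly of the remaining $n$ terms as $-\operatorname{Res}_{z=\infty}$ of $G(z)=z^{a-1}f(Y_n-z)\prod_j(1-y_j/z)^{-1}$, which produces the one-sided series $\Omega(zY_n)$ together with the plethystic shift $f(Y_n-z^{-1})$. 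What this buys over the citation is an explicit finite-$n$ identity, $\langle z^0\rangle W_{z,y_1,\ldots,y_n}(z^af)=\langle z^a\rangle f(Y_n-z^{-1})\Omega(zY_n)$, from which the stated limit is immediate. Two small remarks. First, your endpoint is $\langle z^{a}\rangle f(Y-z^{-1})\Omega(zY)$, which is exactly $\cB_a(f)$ as the paper defines it; note that the rightmost expression printed in the proposition, $\langle z^0\rangle z^{a}f(Y-z^{-1})\Omega(zY)$, extracts the coefficient of $z^{-a}$ and so should read $z^{-a}$ --- your computation in effect corrects this exponent typo rather than reproducing it. Second, the steps ``the $k=0$ term vanishes at $z=0$'' and ``$G$ is regular at $z=0$'' use $a+n>0$; this is automatic in the regime where the proposition is applied ($a=\ell\ge 0$), and holds for all fixed $a\in\mathbb{Z}$ once $n$ is large, so the limit statement is unaffected, but it is worth saying explicitly since for $a<0$ the symmetrization of $z^af$ is only a Laurent polynomial and $\langle z^0\rangle$ is then not literally evaluation at $z=0$.
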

\begin{proof}
    This result is standard but in particular follows from setting $t=0$ in the last part of Lemma 2.14 in \cite{BWAlmostSymSchur}.
\end{proof}

In the finite variable setting, we may describe the operators $D_i^{(n_1|\cdots|n_r)}$ directly in terms of a plethysm and a Weyl operator.

\begin{lem}\label{Ds into Ws lemma}
For all $1\leq i \leq r-1,$ $n_1,\ldots , n_r \geq 1$, and $f\in \Lambda(X_1^{(n_1)}|\cdots | X_r^{(n_r)}),$
    $$D_i^{(n_1|\cdots|n_r)}(f(X_1^{(n_1)}|\cdots | X_r^{(n_r)})) = \langle z^0\rangle W_{z,x_{i+1,1},\ldots,x_{i+1,n_{i+1}}}(f(X_1^{(n_1)}|\cdots| X_i^{(n_i)}+z | \cdots | X_r^{(n_r)}))$$
\end{lem}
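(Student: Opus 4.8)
The plan is to recognize that the operator defining $D_i^{(n_1|\cdots|n_r)}$ in Definition \ref{defn of finite var Ds} is exactly the leading ``staircase'' in the factorization of the Weyl symmetrizer $W_{z,x_{i+1,1},\ldots,x_{i+1,n_{i+1}}}$ supplied by Lemma \ref{Weyl sym is isobaric for longest word}, and that all the remaining factors act as the identity because of the symmetry of the argument. First I would set $N := n_{i+1}+1$, relabel the variable list as $(y_1,\ldots,y_N) := (z,x_{i+1,1},\ldots,x_{i+1,n_{i+1}})$, and abbreviate $\xi_j := \xi_{y_j,y_{j+1}}$. Under this relabeling $\xi_{z,x_{i+1,1}} = \xi_1$ and $\xi^{(i+1)}_{j} = \xi_{y_{j+1},y_{j+2}} = \xi_{j+1}$, so the operator appearing in Definition \ref{defn of finite var Ds} is precisely the single descending chain $\xi_{N-1}\cdots\xi_2\xi_1$.

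Next I would apply Lemma \ref{Weyl sym is isobaric for longest word} with $n=N$ and read off its block structure. Its leftmost block is exactly $\xi_{N-1}\cdots\xi_1$, and the product of the remaining blocks matches term by term the expression that the same lemma assigns to $W_{y_2,\ldots,y_N}$ on the shorter variable list. This yields the operator factorization
$$W_{z,x_{i+1,1},\ldots,x_{i+1,n_{i+1}}} = \left(\xi_{N-1}\cdots\xi_1\right)W_{y_2,\ldots,y_N}.$$

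The decisive step is then to observe that the argument $g := f(X_1^{(n_1)}|\cdots|X_i^{(n_i)}+z|\cdots|X_r^{(n_r)})$ is symmetric in $x_{i+1,1},\ldots,x_{i+1,n_{i+1}}$, i.e. in $y_2,\ldots,y_N$. Indeed $f \in \Lambda(X_1^{(n_1)}|\cdots|X_r^{(n_r)})$ is symmetric in its $(i+1)$-st variable block, and the substitution $X_i^{(n_i)}\mapsto X_i^{(n_i)}+z$ does not involve those variables. Every factor occurring in $W_{y_2,\ldots,y_N}$ has the form $\xi_{y_j,y_{j+1}}$ with $2\le j\le N-1$, and from the defining formula one checks immediately that $\xi_{x,y}(h)=h$ whenever $h$ is symmetric in $x$ and $y$ (the numerator collapses to $(x-y)h$). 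Since $g$ is symmetric in each adjacent pair $(y_j,y_{j+1})$ for $2\le j\le N-1$, each such factor fixes $g$, hence $W_{y_2,\ldots,y_N}(g)=g$. Feeding this into the factorization gives $W_{z,x_{i+1,1},\ldots,x_{i+1,n_{i+1}}}(g) = (\xi_{N-1}\cdots\xi_1)(g) = \xi^{(i+1)}_{n_{i+1}-1}\cdots\xi^{(i+1)}_1\xi_{z,x_{i+1,1}}(g)$, and applying $\langle z^0\rangle$ to both sides is precisely the asserted identity by Definition \ref{defn of finite var Ds}.

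I expect the only place requiring genuine care to be the bookkeeping in the second step: verifying that the trailing blocks of the length-$N$ symmetrizer reassemble into the full symmetrizer $W_{y_2,\ldots,y_N}$ on the shorter list, where an off-by-one in the block indexing of Lemma \ref{Weyl sym is isobaric for longest word} could slip in. The relabeling and the vanishing of the tail are otherwise routine once the symmetry of $g$ in the $(i+1)$-st block is recorded.
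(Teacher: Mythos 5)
Your proof is correct and follows exactly the route the paper intends: its own proof is the one-line remark that the lemma follows from applying Lemma \ref{Weyl sym is isobaric for longest word} to Definition \ref{defn of finite var Ds}, and your argument supplies precisely the missing details (the factorization $W_{y_1,\ldots,y_N} = (\xi_{N-1}\cdots\xi_1)W_{y_2,\ldots,y_N}$ and the observation that the trailing symmetrizer acts as the identity on the multi-symmetric argument).
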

\begin{proof}
    This is a consequence of applying Lemma \ref{Weyl sym is isobaric for longest word} to Definition \ref{defn of finite var Ds}.
\end{proof}

By limiting the number of variables in Lemma \ref{Ds into Ws lemma}, we may show the following pleasant plethystic result. 

\begin{thm}\label{plethysm thm}
For all $1\leq i \leq r-1$ and $f \in \Lambda(X_1|\cdots|X_r),$
    $$D_i(f(X_1|\cdots|X_r))= \langle z^0 \rangle f(X_1|\cdots | X_i+z|X_{i+1}-z^{-1}|\cdots |X_r) \Omega(zX_{i+1})$$
\end{thm}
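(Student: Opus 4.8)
The plan is to obtain the infinite-variable statement of Theorem \ref{plethysm thm} as a stable limit of the finite-variable formula in Lemma \ref{Ds into Ws lemma}, using Proposition \ref{Weyls limit to Bernstein prop} to pass the Weyl symmetrizer to a Bernstein-type plethystic operator. First I would fix $f \in \Lambda(X_1|\cdots|X_r)$ and recall that $D_i = \lim_{n_1,\ldots,n_r\to\infty} D_i^{(n_1|\cdots|n_r)}$. By Lemma \ref{Ds into Ws lemma}, for each choice of variable counts we have
\begin{equation*}
    D_i^{(n_1|\cdots|n_r)}(f) = \langle z^0\rangle W_{z,x_{i+1,1},\ldots,x_{i+1,n_{i+1}}}\bigl(f(X_1^{(n_1)}|\cdots|X_i^{(n_i)}+z|\cdots|X_r^{(n_r)})\bigr).
\end{equation*}
The idea is that only the $(i+1)$-st variable set interacts nontrivially with the Weyl symmetrizer, while the substitution $X_i^{(n_i)}\mapsto X_i^{(n_i)}+z$ is an inert plethystic shift that survives the limit unchanged. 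So I would isolate the dependence on the variables $x_{i+1,1},\ldots,x_{i+1,n_{i+1}}$ and the free variable $z$, treating the other $X_j$ as a symmetric coefficient ring.

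The key step is to apply Proposition \ref{Weyls limit to Bernstein prop} with the roles matched correctly. Writing $g(X_{i+1}) := f(X_1|\cdots|X_i+z|X_{i+1}|\cdots|X_r)$ as a symmetric function in $X_{i+1}$ (with coefficients in the remaining variable sets and in $z$), the proposition gives
\begin{equation*}
    \lim_{n_{i+1}\to\infty}\langle w^0\rangle W_{w,x_{i+1,1},\ldots,x_{i+1,n_{i+1}}}\bigl(w^a\, h(x_{i+1,1},\ldots,x_{i+1,n_{i+1}})\bigr) = \langle w^0\rangle\, w^a\, h(X_{i+1}-w^{-1})\,\Omega(wX_{i+1}).
\end{equation*}
Here the symmetrizer variable and the creation variable must be identified: the $z$ appearing inside our $f$ is exactly the extra variable $w$ adjoined to the $X_{i+1}$-alphabet in the Weyl symmetrizer. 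Tracking this identification carefully, the symmetrizer $W_{z,x_{i+1,1},\ldots}$ converts, in the limit, into the operation $X_{i+1}\mapsto X_{i+1}-z^{-1}$ together with multiplication by $\Omega(zX_{i+1})$, after which $\langle z^0\rangle$ is extracted. Combining this with the surviving shift $X_i\mapsto X_i+z$ yields precisely
\begin{equation*}
    D_i(f) = \langle z^0\rangle f(X_1|\cdots|X_i+z|X_{i+1}-z^{-1}|\cdots|X_r)\,\Omega(zX_{i+1}),
\end{equation*}
as claimed.

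The main obstacle, and the point requiring the most care, is the bookkeeping of the single free variable $z$ as it simultaneously plays two roles: it is the plethystic increment added to $X_i$, and it is the symmetrizer/creation variable feeding the $X_{i+1}$ alphabet. One must check that these are genuinely the same $z$, so that after forming $X_i+z$ and $X_{i+1}-z^{-1}$ the single extraction $\langle z^0\rangle$ is legitimate, rather than requiring two independent variables. I would verify this by noting that in the finite formula the symmetrizer is built on the alphabet $\{z\}\cup\{x_{i+1,1},\ldots,x_{i+1,n_{i+1}}\}$ and the shift $X_i^{(n_i)}+z$ uses the very same adjoined variable, so the identification is forced by Definition \ref{defn of finite var Ds}. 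A secondary technical point is justifying the interchange of the limit with the coefficient-extraction $\langle z^0\rangle$ and with the plethystic substitution on $X_i$; this is controlled because $f$ is a fixed element of $\Lambda(X_1|\cdots|X_r)$ of bounded degree, so only finitely many terms contribute in each fixed total degree and the stability from Lemma \ref{mutlisym stability lemma} (and the hypothesis of the preceding stability Proposition) guarantees the limit is well-defined term-by-term.
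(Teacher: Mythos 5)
Your proposal is correct and follows essentially the same route as the paper: reduce to the finite-variable Weyl-symmetrizer formula of Lemma \ref{Ds into Ws lemma}, separate the $z$-dependence coming from the shift $X_i\mapsto X_i+z$ from the symmetrizer acting on the $X_{i+1}$-alphabet (the paper does this via the explicit expansion $f(X+z|Y)=\sum_{\ell}z^{\ell}f_{\ell}(X)g_{\ell}(Y)$, which is the precise form of the bookkeeping you flag as the main point of care), apply Proposition \ref{Weyls limit to Bernstein prop} termwise in $\ell$, and reassemble under a single $\langle z^0\rangle$. The only cosmetic difference is that the paper first reduces without loss of generality to $f\in\Lambda(X_i|X_{i+1})$, using that $D_i$ commutes with multiplication by symmetric functions in the other variable sets.
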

\begin{proof}
    We may assume without loss of generality that $f \in \Lambda(X_i|X_{i+1})$ since $D_i$ commutes with multiplication by symmetric functions in the variables sets $X_j$ for $j \neq i,i+1.$ We know that 
    $$D_i(f(X_i|X_{i+1})) = \lim_{n_1,\ldots,n_r \rightarrow \infty} D_i^{(n_1|\cdots|n_r)}(f(X_{i}^{(n_i)}|X_{i+1}^{(n_{i+1})})).$$ We now use Lemma \ref{Ds into Ws lemma} with $R = \Lambda(X_i)$ to see 
    $$D_i^{(n_1|\cdots|n_r)}(f(X_{i}^{(n_i)}|X_{i+1}^{(n_{i+1})})) = \langle z^0\rangle W_{z,x_{i+1,1},\ldots,x_{i+1,n_{i+1}}}(f( X_i^{(n_i)}+z | X_{i+1}^{(n_{i+1})})).$$ Let us expand $f(X+z|Y) = \sum_{\ell \geq 0} z^{\ell} f_{\ell}(X)g_{\ell}(Y)$ so that 
    $$\langle z^0\rangle W_{z,x_{i+1,1},\ldots,x_{i+1,n_{i+1}}}(f( X_i^{(n_i)}+z | X_{i+1}^{(n_{i+1})})) = \sum_{\ell \geq 0} f_{\ell}(X_i^{(n_i)}) \langle z^0\rangle \left(W_{z,x_{i+1,1},\ldots,x_{i+1,n_{i+1}}} z^{\ell}g_{\ell}(X_{i+1}^{(n_{i+1})}) \right).$$ Thus 
    $$\lim_{n_1,\ldots,n_r \rightarrow \infty} D_i^{(n_1|\cdots|n_r)}(f(X_{i}^{(n_i)}|X_{i+1}^{(n_{i+1})})) = \sum_{\ell \geq 0} f_{\ell}(X_i) \lim_{n_1,\ldots,n_r \rightarrow \infty} \langle z^0\rangle \left(W_{z,x_{i+1,1},\ldots,x_{i+1,n_{i+1}}} z^{\ell}g_{\ell}(X_{i+1}^{(n_{i+1})}) \right)$$ and so from Proposition \ref{Weyls limit to Bernstein prop} we find 
    $$\lim_{n_1,\ldots,n_r \rightarrow \infty} \langle z^0\rangle \left(W_{z,x_{i+1,1},\ldots,x_{i+1,n_{i+1}}} z^{\ell}g_{\ell}(X_{i+1}^{(n_{i+1})}) \right) = \mathcal{B}_{\ell}(g_{\ell})(X_{i+1})= \langle z^0 \rangle z^{\ell} g_{\ell}(X_{i+1}-z^{-1})\Omega(zX_{i+1}).$$ All together, 
    \begin{align*}
        D_i(f(X_i|X_{i+1})) &= \sum_{\ell \geq 0} f_{\ell}(X_i)\langle z^0 \rangle z^{\ell} g_{\ell}(X_{i+1}-z^{-1})\Omega(zX_{i+1})\\
        &= \langle z^0 \rangle \left( \sum_{\ell \geq 0} z^{\ell}f_{\ell}(X_i) g_{\ell}(X_{i+1}-z^{-1}) \right)\Omega(zX_{i+1})\\
        &= \langle z^0 \rangle f(X_{i}+z|X_{i+1}-z^{-1}) \Omega(zX_{i+1}).\\
    \end{align*}

\end{proof}

Lastly, we apply Theorem \ref{plethysm thm} to Proposition \ref{recurrence prop} to find:

\begin{cor}\label{plethysm formula for multi-sym Schurs}
For all $(\lambda^{(1)}|\cdots |\lambda^{(r)}) \in \Par^r$ and $1\leq i \leq r$ with $\lambda^{(i)} \neq \emptyset,$
    $$\cS_{(\lambda^{(1)}|\cdots|\lambda^{(i)}\setminus \{\lambda^{(i)}_{1}\}|\lambda^{(i+1)} \cup \{\lambda^{(i)}_{1}\}|\cdots |\lambda^{(r)})} = \langle z^0 \rangle \cS_{(\lambda^{(1)}|\cdots |\lambda^{(r)})}(X_1|\cdots | X_i+z|X_{i+1}-z^{-1}|\cdots |X_r) \Omega(zX_{i+1}) $$
\end{cor}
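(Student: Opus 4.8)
The plan is to observe that this corollary is a direct consequence of chaining together the two main results of this section: the recurrence in Proposition \ref{recurrence prop} and the explicit plethystic description of the operators $D_i$ in Theorem \ref{plethysm thm}. No new combinatorics or representation theory is needed at this stage; all of the substantive work has already been done.

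Concretely, I would first note that $\cS_{(\lambda^{(1)}|\cdots|\lambda^{(r)})}$ is an element of $\Lambda(X_1|\cdots|X_r)$ by Definition \ref{multi-sym Schur function defn}, so Theorem \ref{plethysm thm} applies verbatim with $f = \cS_{(\lambda^{(1)}|\cdots|\lambda^{(r)})}$, giving
$$D_i\!\left(\cS_{(\lambda^{(1)}|\cdots|\lambda^{(r)})}\right) = \langle z^0 \rangle\, \cS_{(\lambda^{(1)}|\cdots|\lambda^{(r)})}(X_1|\cdots|X_i+z|X_{i+1}-z^{-1}|\cdots|X_r)\,\Omega(zX_{i+1}).$$
Next I would invoke Proposition \ref{recurrence prop} to evaluate the left-hand side: since $\lambda^{(i)} \neq \emptyset$, the largest part $\lambda^{(i)}_1$ exists and
$$D_i\!\left(\cS_{(\lambda^{(1)}|\cdots|\lambda^{(r)})}\right) = \cS_{(\lambda^{(1)}|\cdots|\lambda^{(i)}\setminus\{\lambda^{(i)}_1\}|\lambda^{(i+1)}\cup\{\lambda^{(i)}_1\}|\cdots|\lambda^{(r)})}.$$
Equating the two expressions for $D_i(\cS_{(\lambda^{(1)}|\cdots|\lambda^{(r)})})$ immediately yields the claimed identity.

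Since both inputs are already established, there is no genuine obstacle at the level of this corollary; the only points that deserve a line of care are bookkeeping. First, the hypothesis $\lambda^{(i)}\neq\emptyset$ is precisely what makes the multi-partition on the right-hand side well-defined, since otherwise $\lambda^{(i)}_1$ has no meaning. Second, one must read the plethystic substitution $X_i \mapsto X_i+z$, $X_{i+1}\mapsto X_{i+1}-z^{-1}$, followed by multiplication by $\Omega(zX_{i+1})$ and extraction of the $z^0$-coefficient, as the well-defined operation on formal power series supplied by Theorem \ref{plethysm thm}. All of the actual content—the stable-limit recurrence moving the largest part of $\lambda^{(i)}$ into the next block, and the passage from finite-variable Weyl symmetrizers to the Bernstein-type plethystic operator—resides in Proposition \ref{recurrence prop} and Theorem \ref{plethysm thm} respectively, so the corollary is obtained simply by reading them together.
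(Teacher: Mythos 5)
Your proposal is correct and matches the paper's own derivation exactly: the corollary is obtained by applying Theorem \ref{plethysm thm} with $f = \cS_{(\lambda^{(1)}|\cdots|\lambda^{(r)})}$ and then substituting the recurrence from Proposition \ref{recurrence prop} for the left-hand side. Your added remarks on the role of the hypothesis $\lambda^{(i)}\neq\emptyset$ are accurate bookkeeping and nothing more is needed.
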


\begin{remark}
    The natural $q$-analogues of the operators $D_1,\ldots, D_{r-1}:\Lambda_{\mathbb{Q}}(X_1|\cdots|X_r) \rightarrow \Lambda_{\mathbb{Q}}(X_1|\cdots|X_r)$ are the operators $\mathcal{D}_1,\ldots, \mathcal{D}_{r-1}:\Lambda_{\mathbb{Q}(q)}(X_1|\cdots |X_r) \rightarrow \Lambda_{\mathbb{Q}(q)}(X_1|\cdots |X_r)$ given by 
    $$\mathcal{D}_i(f(X_1|\cdots|X_r))= \langle z^0 \rangle f(X_1|\cdots | X_i+z|X_{i+1}-z^{-1}|\cdots |X_r) \Omega((1-q)zX_{i+1}).$$ Many of the above results follow similarly in the $q$-analogous situation where we replace the isobaric divided difference operators with Demazure-Lusztig operators, the Weyl symmetrizers with the (normalized) Hecke symmetrizers, and the Bernstein operators with the Jing operators. We may also define vertex operators $\mathcal{D}_1(z),\ldots,\mathcal{D}_{r-1}(z) :\Lambda_{\mathbb{Q}(q)}(X_1|\cdots |X_r) \rightarrow \Lambda_{\mathbb{Q}(q)}(X_1|\cdots |X_r)((z))$ by 
    $$\mathcal{D}_i(z)f(X_1|\cdots |X_r) := f(X_1|\cdots | X_i+z|X_{i+1}-z^{-1}|\cdots |X_r) \Omega((1-q)zX_{i+1}).$$ Note that $\mathcal{D}_i = \langle z^0 \rangle \mathcal{D}_i(z)$ and $D_i = \langle z^0 \rangle \mathcal{D}_i(z)|_{q=0}.$ It is an interesting, and seemingly nontrivial, question to determine the algebraic relations the vertex operators $\mathcal{D}_1(z),\ldots,\mathcal{D}_{r-1}(z)$ satisfy. It would also be interesting to find more plethystic operators similar to the $D_1,\ldots,D_{r-1}$ which operate nicely on the multi-symmetric Schur functions.
\end{remark}

\printbibliography

@article{MTY19,
 Author = {Monical, Cara and Tokcan, Neriman and Yong, Alexander},
 Title = {Newton polytopes in algebraic combinatorics},
 FJournal = {Selecta Mathematica. New Series},
 Journal = {Sel. Math., New Ser.},
 ISSN = {1022-1824},
 Volume = {25},
 Number = {5},
 Pages = {37},
 Note = {Id/No 66},
 Year = {2019},
 Language = {English},
 DOI = {10.1007/s00029-019-0513-8},
 Keywords = {05E05,05E10,52B05},
 zbMATH = {7127863},
 Zbl = {1426.05175}
}

@article{HY22,
 author = {Hodges, Reuven and Yong, Alexander},
 title = {Coxeter combinatorics and spherical {Schubert} geometry},
 fjournal = {Journal of Lie Theory},
 journal = {J. Lie Theory},
 issn = {0949-5932},
 volume = {32},
 number = {2},
 pages = {447--474},
 year = {2022},
 language = {English},
 keywords = {14M27,14M15,05E05,05E10,14L30},
 url = {www.heldermann.de/JLT/JLT32/JLT322/jlt32022.htm},
 zbMATH = {7493088},
 Zbl = {1486.14070}
}

@article{GHY21,
      title={Classification of Levi-spherical Schubert varieties}, 
      author={Yibo Gao and Reuven Hodges and Alexander Yong},
      year={2021},
      eprint={2104.10101},
      archivePrefix={arXiv},
      primaryClass={math.CO},
      url={https://arxiv.org/abs/2104.10101}
}

@article{RY15,
 author = {Ross, Colleen and Yong, Alexander},
 title = {Combinatorial rules for three bases of polynomials},
 fjournal = {S{\'e}minaire Lotharingien de Combinatoire},
 journal = {S{\'e}min. Lothar. Comb.},
 issn = {1286-4889},
 volume = {74},
 pages = {b74a, 11},
 year = {2015},
 language = {English},
 keywords = {05E10,05E05,17B20,14M15},
 zbMATH = {6520892},
 Zbl = {1328.05200}
}

@article{FGPS,
 Author = {Fan, Neil J. Y. and Guo, Peter L. and Peng, Simon C. Y. and Sun, Sophie C. C.},
 Title = {Lattice points in the {Newton} polytopes of key polynomials},
 FJournal = {SIAM Journal on Discrete Mathematics},
 Journal = {SIAM J. Discrete Math.},
 ISSN = {0895-4801},
 Volume = {34},
 Number = {2},
 Pages = {1281--1289},
 Year = {2020},
 Language = {English},
 DOI = {10.1137/19M1305562},
 Keywords = {52B20,05E05},
 zbMATH = {7225935},
 Zbl = {1514.52012}
}

@article{RS95,
 Author = {Reiner, Victor and Shimozono, Mark},
 Title = {Key polynomials and a flagged {Littlewood}-{Richardson} rule},
 FJournal = {Journal of Combinatorial Theory. Series A},
 Journal = {J. Comb. Theory, Ser. A},
 ISSN = {0097-3165},
 Volume = {70},
 Number = {1},
 Pages = {107--143},
 Year = {1995},
 Language = {English},
 DOI = {10.1016/0097-3165(95)90083-7},
 Keywords = {05E05,05E10},
 zbMATH = {740290},
 Zbl = {0819.05058}
}

@article{Dem_1974,
author = {Michel Demazure},
title = {{Désingularisation des variétés de Schubert généralisées}},
volume = {7},
journal = {Annales scientifiques de l'École Normale Supérieure},
number = {1},
pages = {53 -- 88},
year = {1974}
}

@article{mason_2009,
author = {Sarah Mason},
title = {{An Explicit Construction of Type A Demazure Atoms}},
URL = {https://arxiv.org/abs/0707.4267},
year = {2009}
}

@article{Ion_2003,
author = {Bogdan Ion},
title = {{Nonsymmetric Macdonald polynomials and Demazure characters}},
volume = {116},
journal = {Duke Mathematical Journal},
number = {2},
publisher = {Duke University Press},
pages = {299 -- 318},
year = {2003},
doi = {10.1215/S0012-7094-03-11624-5}
}

@article{Kirillov_2016,
   title={Notes on Schubert, Grothendieck and Key Polynomials},
   ISSN={1815-0659},
   url={http://dx.doi.org/10.3842/SIGMA.2016.034},
   DOI={10.3842/sigma.2016.034},
   journal={Symmetry, Integrability and Geometry: Methods and Applications},
   publisher={SIGMA (Symmetry, Integrability and Geometry: Methods and Application)},
   author={Kirillov, Anatol N.},
   year={2016},
   month=mar }

@article {haglund2007combinatorial,
    AUTHOR = {Haglund, J. and Haiman, M. and Loehr, N.},
     TITLE = {A combinatorial formula for nonsymmetric {M}acdonald
              polynomials},
   JOURNAL = {Amer. J. Math.},
  FJOURNAL = {American Journal of Mathematics},
    VOLUME = {130},
      YEAR = {2008},
    NUMBER = {2},
     PAGES = {359--383},
      ISSN = {0002-9327},
   MRCLASS = {05E05},
  MRNUMBER = {2405160},
MRREVIEWER = {Pavlo Pylyavskyy},
       DOI = {10.1353/ajm.2008.0015},
       URL = {https://doi.org/10.1353/ajm.2008.0015},
}

@phdthesis{BWThesis,
    author={Bechtloff Weising, Milo},
    title={Stable-Limit Cherednik Theory},
    school={University of California, Davis},
    year={2024}
    }

@article{AQ19,
 Author = {Assaf, Sami and Quijada, Danjoseph},
 Title = {A {Pieri} rule for {Demazure} characters of the general linear group},
 Year = {2019},
 HowPublished = {Preprint, {arXiv}:1908.08502 [math.{CO}] (2019)},
 URL = {https://arxiv.org/abs/1908.08502},
 arXiv = {arXiv:1908.08502}
}

@article{Andersen1985,
author = {Andersen, H.H.},
journal = {Inventiones mathematicae},
keywords = {vanishing of cohomology groups; Borel subgroup; Schubert varieties; flag variety; Demazure character formula},
pages = {611-618},
title = {Schubert varieties and Demazure's character formula.},
url = {http://eudml.org/doc/143216},
volume = {79},
year = {1985},
}

@Article{BW,
 Author = {Bechtloff Weising, Milo J.},
 Title = {Stable-limit non-symmetric {Macdonald} functions},
 FJournal = {Algebraic Combinatorics},
 Journal = {Algebr. Comb.},
 ISSN = {2589-5486},
 Volume = {7},
 Number = {6},
 Pages = {1845--1878},
 Year = {2025},
 Language = {English},
 DOI = {10.5802/alco.395},
 Keywords = {05E05,33D52},
 zbMATH = {7966781}
}

@article{BWAlmostSymSchur,
      title={Almost Symmetric Schur Functions}, 
      author={Bechtloff Weising, Milo},
      year={2024},
      eprint={2405.01049},
      archivePrefix={arXiv},
      primaryClass={math.CO},
      URL = {https://arxiv.org/abs/2405.01049}
}

\end{document}